\newtheorem{theorem}{Theorem}
\newtheorem{thm}{Theorem}[section]
 \newtheorem{cor}[thm]{Corollary}
 \newtheorem{prop}[thm]{Proposition}
 \newtheorem{defn}[thm]{Definition}
 \newtheorem{rem}[thm]{Remark}
\numberwithin{equation}{section}
\newcommand{\be}{\begin{equation} \label}
\newcommand{\ee}{\end{equation}}
\newcommand{\bea}{\begin{eqnarray}\label}
\newcommand{\eea}{\end{eqnarray}}
\newcommand{\bas}{\begin{eqnarray*}}
\newcommand{\eas}{\end{eqnarray*}}
\newcommand{\bit}{\begin{itemize}}
\newcommand{\eit}{\end{itemize}}
\newcommand{\nn}{\nonumber}
\newcommand{\R}{\mathbb{R}}
\newcommand{\N}{\mathbb{N}}
\newcommand{\E}{\mathbb{E}}
 \newcommand{\Pbb}{\mathbb{P}}
\newcommand{\si}{\sigma}
\def\bge{\begin{eqnarray}}
\def\ege{\end{eqnarray}}
\def\bgee{\begin{eqnarray*}}
\def\egee{\end{eqnarray*}}
\newcommand{\la}{\lambda}
\newcommand{\no}{\nonumber}
\newcommand{\ba}{\begin{array}}
\newcommand{\ea}{\end{array}}
\def\bge{\begin{eqnarray}}
\def\bgee{\begin{eqnarray*}}
\def\ege{\end{eqnarray}}
\def\egee{\end{eqnarray*}}
\def\pl{\partial}
\def\ve{\varepsilon}
\def\R{{\mathbb R}}
\def\N{{\mathbb N}}
\def\u{z}
\def\v{v}
\def\F{S}
\DeclareMathOperator*{\esup}{ess\,sup}
\newcommand{\red}[1]{\red{#1}}
\newcommand{\cred}\red{}
\renewcommand{\red}[1]{#1}
\renewcommand{\cred}{}
\let\originallesssim\lesssim
\let\originalgtrsim\gtrsim
\DeclareRobustCommand{\lesssim}{%
  \mathrel{\mathpalette\lowersim\originallesssim}%
}
\DeclareRobustCommand{\gtrsim}{%
  \mathrel{\mathpalette\lowersim\originalgtrsim}%
}
\newcommand{\lowersim}[2]{%
  \sbox\z@{$#1<$}%
  \raisebox{-\dimexpr\height-\ht\z@}{$\m@th#1#2$}%
}
\author[N.I. Kavallaris]{Nikos I. Kavallaris}
\address{
 Department of Mathematics and Computer Science, Karlstad University, Karlstad, Sweden}
\email{nikos.kavallaris@kau.se}
\author[C.V. Nikolopoulos]{Christos V. Nikolopoulos}
\address{Department of Mathematics, University of Aegean, Karlovassi, Samos, Greece}
\email{cnikolo@aegean.gr}
\author[A.N. Yannacopoulos]{Athanasios N. Yannacopoulos}
\address{Department Of Statistics, Athens University of Economics and Business,  Athens,  Greece}
\email{ayannaco@aueb.gr}
\date{\today}
\begin{document}

\title[A stochastic nonlocal difusion model]{On the impact of noise on quennching for  a nonlocal  diffusion model  driven by a mixture of Brownian and fractional Brownian motions}


\begin{abstract}
In this paper, we study a stochastic parabolic problem involving a nonlocal diffusion operator associated with nonlocal Robin-type boundary conditions. The stochastic dynamics under consideration are driven by a mixture of a classical Brownian and a fractional Brownian motions with Hurst index $H\in(\frac{1}{2}, 1).$ We first establish  local in existence result of the considered model and then explore conditions under which the resulting SPDE exhibits finite-time quenching. Using the probability distribution of perpetual integral functional of Brownian motion as well as tail estimates of fractional Brownian motion we provide analytic estimates for certain statistics of interest, such as quenching times and the corresponding quenching probabilities. The existence of  global in time solutions is also investigated and as a consequence a lower estimate of the quenching time is also derived. Our analytical results demonstrate the non-trivial impact of the considered  noise on the dynamics of the system. Next, a connection of a special case of the examined model is drawn in the context of MEMS technology. Finally, a numerical investigation of the considered model for a fractional Laplacian diffusion and Dirichlet-type boundary conditions is delivered.
\end{abstract}


\maketitle

\noindent{\bf Key words:} Nonlocal diffusion, Brownian motion, fractional Brownian motion, exponential functionals, quenching, global existence, SPDEs, MEMS.

\noindent{\bf Mathematics Subject Classification:} Primary: 60G22, 60G65, 60H15, 35R60; Secondary: 65M06, 35A01, 60J60.

\tableofcontents

\section{Introduction}
In this paper we consider the following nonlocal stochastic semilinear parabolic problem

\begin{eqnarray}
&&dz =\left(-\frac{1}{2 }k^2(t)\mathcal{L} z - g(x,z)\right)dt -z dN_t, \quad x\in D,\; t>0, \quad \label{model1:1}\\
&& \mathcal{N}z(x,t)+\beta(x)z(x,t)=0, \quad  x\in  D^c:=\mathbb{R}^d\setminus D,\; t>0,\label{model1:2}\\
&& 0\leq z(x,0)=z_0(x)<1, \quad x \in D,\label{model1:3}
\end{eqnarray}
where $\mathcal{L}$ is an integral (nonlocal) operator of the fractional Laplacian type and $\mathcal{N}$  is the corresponding nonlocal  Neumann-type operator whose forms are specified in Section \ref{pre}. Also, $N_{t}$ is a stochastic process which is a mixture of the Wiener process and the fractional Brownian motion with a Hurst index of $H >1/2$ and  $g$ is a nonlinear function such that $g(x,z) \to - \infty$ as $z \to 0$ whereas $g(x,z) >0$ for $\u$ large enough. A possible function $g$ with this behavior can be $g(x,z)= \lambda \zeta(x) z^{-2}-\gamma z$ for $\gamma, \lambda ,\zeta(x) >0$.  The choice of this asymptotic behaviour for $g$ in the limit as $z \to 0$ is so that the model, in its deterministic form induces quenching behaviour whereas for $z$ sufficiently large non vanishing solutions can be maintained. 

System \eqref{model1:1} -- \eqref{model1:3} is a rather general model that may be used to study the effects of spatio-temporal nonlocality and noise on quenching in nonlinear problems.
The motivation for the introduction of the integral (nonlocal) diffusion operator and the noise terms is so that we may study the combined effects of nonlocal spatio-temporal processes on the quenching behaviour of nonlinear stochastic PDEs; the considered nonlocal diffusion term modelling  spatial nonlocal effects  and the  noise term $N_{t},$ as a mixture of the Wiener process and the fractional Brownian motion,  modelling the combined effects of stochasticity and temporal nonlocal effects.
Furthermore, the choice of the noise term is so that in the limit where quenching happens $z \to 0$, the noise term will not dominate over the dynamics of the model and sweep them away under the effects of noise. On the other hand, before quenching is about to happen, i.e., when $z$ is not close to $0$, the effects of the noise term may be such as to drive the system away from the quenching regime (note that $N_{t}$ has a symmetric distribution around $0$).

 In Section \ref{cmm} we  provide a detailed example, where such a model can arise from a concrete  system of the MEMS type, and consider the general results provided in the context of the failure of MEMS systems.
The determinisitc version of \eqref{model1:1} -- \eqref{model1:3} for the case of local diffusion and for reaction terms of  the form $g(x,z)= \lambda \zeta(x) z^{-2}$ is related with applications in MEMS systems and it has been extensively studied, see \cite{EGG10, FMPS07, KMS08, KS18} and the refences therein. Nonlocal versions of those deterministic MEMS models also investigated in \cite{DKN19, GHW09, GK12, gkwy20, KLN16, M1, M2, M3, PT01}. Whilst, hyperbolic type deterministic MEMS models with both local and nonlocal nonlinearities have been considered in  \cite{F14, G10, HRL22, KLNT11, KLNT11}. The aforementioned works mainly focused on the impact of the parameter $\lambda$ and the initial data on the mathematical phenomenon of {\it quenching}, that is when $z\to 0,$ so the nonlinear term $g(x,z)$ becomes singular. Such a singular behaviour is closely related to the mechanical phenomenon of {\it touching down}, cf. \cite{KS18, JAP-DHB02, PT01}, which potentially might lead to the collapse of MEMS systems. The dynamics of stochastic systems analogous to \eqref{model1:1} -- \eqref{model1:3}, with either local or nonlocal diffusion, but with blow-up nonlinear terms $g(z)= \lambda z^{p},\; \lambda>0, \; p>1$ has been explored in \cite{AJN, DKM20, DLM, DKLM, DKLM20, DKLMT23, KY20}. There the impact of the noise, in the case of a Wiener process, a fractional Brownian motion and a mixture of them, on the phenomenon of {\it blow-up} is investigated. To this end estimates of the blow-up probability and the blow-up time are provided. The dynamics of other analogous nonlocal stochastic PDE models has been investigated in \cite{AMN20, LCC22, NK15,  XC21, XC22}. However, a stochastic model with a MEMS nonlinearity $g(z)= \lambda  z^{-2}$ and local diffusion was only first considered  in \cite{Kavallaris2016}. The author in \cite{Kavallaris2016}  considered a  multiplicative noise of the form $\pi(z) dB_t,$ for $\pi(\cdot)$ a Lipschitz function, and proved the finite-time quenching of the solution moments  for large parameter values and big enough initial data. Later on in \cite{DKN22, DNMK22} the authors explored the dynamics of stochastic MEMS models with multiplicative noise of the form $z dB_t$ and $z dB_t^H$ for Hurst index $H>\frac{1}{2}.$ The relatively simple form of the multiplicative noise term   allowed for the path-wise analysis of quenching and hence estimates of quenching probability and quenching time were obtained. This is to be contrasted with weaker notions of quenching,  which was discussed in \cite{Kavallaris2016}.

The aim of the present paper is twofold; first we examine the conditions under which quenching occurs for the stochastic problem \eqref{model1:1} -- \eqref{model1:3}. Secondly,  we obtain analytic estimates of the {\it quenching probability} as well as of the {\it quenching time},    which is a stopping time of \eqref{model1:1} -- \eqref{model1:3}. To the best of our knowledge, this is the first time that those two tasks  are considered in the context  of nonlocal diffusion SPDEs. Such considerations have their own theoretical importance in the context of singular SPDEs. Also, based on the potential connection of model \eqref{model1:1} -- \eqref{model1:3} to apllications of MEMS devices, see section \ref{cmm}, and  provided the significance of those devices in biomedical applications, such as drug delivery and diagnostics \cite{Chirkov, Nuxoll, Yager}, our analysis might offer valuable insights into their operational characteristics in the presence of a mixture noise.

The layout of the paper is as follows: in the next section we provide all the theoretical framework for the investigation of \eqref{model1:1} -- \eqref{model1:3}.
In Section \ref{lex}, we introduce a relevant to \eqref{model1:1} -- \eqref{model1:3} random PDE (RPDE) problem and define the different kind of solutions for those related problems, which are proven to be equivalent. Next we establish the local in time  existence and uniqueness of those two problems. Section \ref{upq} deals with the derivation of upper estimates for the quenching probability and the quenching time. To this end we make use of some known tail estimates for the fractional Brownian. In section \ref{leqp} we derive  lower estimates for the quenching probability by means of  the probability distribution of perpetual integral functionals of Brownian motion as well as using tail estimates of the fractional Brownian motion. Next, in section \ref{gexlb} we explore conditions under which model \eqref{model1:1} -- \eqref{model1:3} has global in time solutions. This study leads also to the derivation of lower bounds for the quenching time. We investigate the  connection of our nonlocal model with some applications in MEMS industry in section \ref{cmm}. A complementary numerical analysis of \eqref{model1:1} -- \eqref{model1:3} for the fractional Laplacian operator $(-\Delta)^{\alpha}, \; 0<\alpha<1,$ and for Dircihlet boundary conditions is conducted in section \ref{nss}, since our analytical approach is not easily applicable in that case. Then a discussion section, where our main results are highlighted, and an Appendix section, where the proof of an important auxiliary result is given,  follow.

\section{Preliminaries}\label{pre}

\subsection{The nonlocal operators $\mathcal{L}$ and $\mathcal{N}$}

By $\mathcal{L}$ we denote the nonlocal operator
\begin{eqnarray}\label{nop}
\mathcal{L}u(x):=p.v. \int_{\R^{d}}(u(x)-u(y)) k(x,y) dy, \,\,\,\,\,\, x \in \R^{d},
\end{eqnarray}
where $k : \R^{d} \times \R^{d} \setminus {\rm diag} \to [0, \infty)$ is measurable kernel satisfying
\begin{eqnarray*}
\widetilde{\Lambda}^{-1} \nu(x-y) \le k(x,y) \le \widetilde{\Lambda} \nu(x-y), \,\,\,\,\, x,y \in \R^{d},\;  \widetilde{\Lambda}>0,
\end{eqnarray*}
for some $\nu: \R^d \setminus \{0\} \to [0,\infty)$ that is the density of a symmetric L\'evy measure.
Moreover, ${\mathcal N}$ is the nonlocal analogue of the Neumann operator, defined by
\begin{eqnarray}\label{bnop}
{\mathcal N} u(y):= \int_{\Omega} (u(y)-u(x)) k(x,y) dx, \,\,\,\,\, y \in D^{c}.
\end{eqnarray}
In the special case where $k(x,y)=\nu(x-y)$ the operator $\mathcal{L}$ is translation invariant and the generator of a symmetric L\'evy process. This is the case we will focus on for this work.  For the choice $k(x,y)=\nu(x-y)= C_{d,\alpha} |x-y|^{-d-2 \alpha}$, for $\alpha \in (0,1)$ the corresponding operator $\mathcal{L}$ reduces to the fractional Laplacian, $\mathcal{L}=(-\Delta)^{\alpha}$. The usual choice of the constant $C_{d,\alpha}$ is
 $C_{d,\alpha}:= \frac{\alpha 2^{2\alpha}\Gamma\left(\frac{d}{2}+\alpha\right)}{\pi^{2\alpha+\frac{d}{2}} \Gamma(1-\alpha)}$ with $\Gamma$ being the Euler function. For this choice the operator  $\mathcal{L}$ converges (in the appropriate sense) to the standard Laplacian operator $-\Delta$ in the limit as $\alpha \to 1$.

\subsubsection{Variational formulation and relevant function spaces}

Consider the bilinear forms 
\begin{eqnarray*}
{\mathcal E}(u,v):= \int_{(D^{c} \times D^{c})^{c} } (u(x)-u(y))(v(x)-v(y)) k(x-y) dx dy,
\end{eqnarray*}
and 
\begin{eqnarray*}
Q_{\beta}(u,v):= {\mathcal E}(u,v) + \int_{D^{c}} \beta(y) u(y) v(y) dy.
\end{eqnarray*}

As shown in \cite{foghem2022general}, the bilinear form $Q_{\beta}$ is related to the nonlocal Robin problem
\bge
&&\mathcal{L} u = f,  \,\,\, \mbox{in} \,D \label{A1}\\
&&\mathcal{N} u + \beta u =0, \,\,\, \mbox{on} \,\,  D^{c}\label{A2}.
\ege

The connection between the two comes in terms of the variational formulation of \eqref{A1}-\eqref{A2} 
\begin{eqnarray}\label{WEAK-FORMULATION}
Q_{\beta}(u,v)= \int_{D} f(x) v(x) dx, \,\,\,\, \forall \,\, v \in V_{\nu}(D \mid \R^d) \cap L^{2}(D^{c} ; \beta),
\end{eqnarray}
where  
\begin{eqnarray*}
V_{\nu}(D \mid \R^d):=\left\{ u : \R^{d} \to \R \,\,\, mes. \,\, : \,\, \left . u \right\vert_{D} \in L^{2}(D), \,\,\,
 |u |_{V_{\nu}(D \mid \R^d)} < \infty\right\}
 \end{eqnarray*}
 and
\begin{eqnarray*}
| u |_{V_{\nu}(D \mid \R^d)}^2 = \int_{D}\int_{\R^d} (u(x)-u(y))^2 \nu(x-y) dx dy.
\end{eqnarray*}
This space can be turned into a Hilbert space when equipped with the norm
\begin{eqnarray*}
\| u\|_{V_{\nu}(D \mid \R^d)} = \| u \|_{L^2(D)}^2 + | u|_{V_{\nu}(D \mid \R^d)}^2.
\end{eqnarray*}
Concerning the Robin coefficient $\beta(x)$ we will assume that $\beta(x)>0$ and that $\beta \tilde{\nu}^{-1}: D^{c} \to [0, \infty)$ is essentially bounded and non-trivial (as in \cite{foghem2022general}).
The above weak formulation follows from the nonlocal Gauss-Green formula
\begin{eqnarray*}
\int_{D} \mathcal{L} u(x) v(x) dx = {\mathcal E}(u,v) - \int_{D^{c}} {\mathcal N}u(y) v(y) dy, \,\,\,\, \forall \, u,v \in C_{c}^{\infty}(\R^d),
\end{eqnarray*}
and the density of $C_{c}^{\infty}(\R^d)$ in $V_{\nu}(D \mid \R^d)$ (for $D$ such that $\partial D$ is compact and Lipschitz; see  in \cite[Theorem 2.11]{foghem2022general}).

\subsubsection{The   nonlocal Robin boundary value problem}
In \cite[Theorem 4.21]{foghem2022general}  it is shown that  the Robin eigenfunctions of $\mathcal{L}$, $\{\psi_{n} \}$ are a countable set satisfying 
\bge
&&\mathcal{L} \psi_{n} = \mu_{n} \psi_{n},   \,\,\, D \label{B1}\\
&&\mathcal{N} \psi_n + \beta \psi_n =0, \,\,\, D^{c},\label{B2}
\ege
 with $0 < \mu_1( \beta) \le \mu_{2}(\beta) \le \cdots $ such that $\{\phi_n\}$ forms an orthonormal basis of $L^{2}(D)$, and

We now state the following result which will be used in this paper (for the proof see Appendix in section  \ref{apx})

\begin{prop}\label{peip}
The first eigenvalue of the Robin problem \eqref{B1}-\eqref{B2} can be obtained by the variational formula
\begin{eqnarray}\label{Va}
\mu_1 = \min \{ Q_{\beta}(u,u) \,\, \mid \,\, \| u \|_{L^{2}(D)}=1\},
\end{eqnarray}
and the eigenfunction can be chosen to be strictly positive.
\end{prop}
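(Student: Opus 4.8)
The statement is the nonlocal Robin analogue of the Courant--Fischer characterization of a principal eigenvalue, so the plan is to run the direct method of the calculus of variations on the Rayleigh quotient attached to $Q_\beta$, and then to make a minimizer sign-definite by replacing it with its modulus and invoking a nonlocal strong maximum principle. I would work in the Hilbert space $\mathbb{V}:=V_\nu(D\mid\R^d)\cap L^2(D^c;\beta)$. Using the standing hypothesis that $\beta\tilde{\nu}^{-1}$ is essentially bounded one first checks that $V_\nu(D\mid\R^d)\hookrightarrow L^2(D^c;\beta)$ continuously, so $\mathbb{V}$ coincides with $V_\nu(D\mid\R^d)$ as a set and $Q_\beta$ is a symmetric, nonnegative, $\mathbb{V}$-bounded bilinear form; the kernel bounds $\widetilde\Lambda^{-1}\nu\le k\le\widetilde\Lambda\nu$ moreover give ${\mathcal E}(u,u)\asymp|u|^2_{V_\nu(D\mid\R^d)}$. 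Set $m:=\inf\{Q_\beta(u,u):u\in\mathbb{V},\ \|u\|_{L^2(D)}=1\}\ge0$.

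For the existence of a minimizer, take $(u_k)\subset\mathbb{V}$ with $\|u_k\|_{L^2(D)}=1$ and $Q_\beta(u_k,u_k)\to m$. Since both summands of $Q_\beta$ are nonnegative, $\mathcal{E}(u_k,u_k)$ stays bounded, hence $(u_k)$ is bounded in $\mathbb{V}$ and, along a subsequence, $u_k\wto u$ in $\mathbb{V}$. The compactness of the embedding $V_\nu(D\mid\R^d)\hookrightarrow\hookrightarrow L^2(D)$ (valid since $\partial D$ is compact and Lipschitz; cf. \cite{foghem2022general}) gives $u_k\to u$ strongly in $L^2(D)$, so $\|u\|_{L^2(D)}=1$ and $u\not\equiv0$ on $D$; and weak lower semicontinuity of the nonnegative quadratic form $v\mapsto Q_\beta(v,v)$ yields $Q_\beta(u,u)\le\liminf_k Q_\beta(u_k,u_k)=m$, so the infimum is attained, $m=Q_\beta(u,u)$.

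Next I would read off the Euler--Lagrange equation: differentiating $t\mapsto Q_\beta(u+tv,u+tv)/\|u+tv\|_{L^2(D)}^2$ at $t=0$ for arbitrary $v\in\mathbb{V}$ gives $Q_\beta(u,v)=m\int_D uv\,dx$ for all $v\in\mathbb{V}$, which by the weak formulation \eqref{WEAK-FORMULATION} says precisely that $u$ solves \eqref{B1}--\eqref{B2} weakly with eigenvalue $m$. Conversely, any Robin eigenpair $(\mu_n,\psi_n)$ from \cite[Theorem 4.21]{foghem2022general} satisfies $Q_\beta(\psi_n,\psi_n)=\mu_n\|\psi_n\|_{L^2(D)}^2$, hence $\mu_n\ge m$; since $m$ is itself an eigenvalue and the spectrum is exactly $\{\mu_n\}$, one concludes $m=\mu_1(\beta)$, which is \eqref{Va}.

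Finally, for the sign of the eigenfunction, note that $(|u|(x)-|u|(y))^2\le(u(x)-u(y))^2$ pointwise and $\beta|u|^2=\beta u^2$, so $Q_\beta(|u|,|u|)\le Q_\beta(u,u)=\mu_1$ while $\||u|\|_{L^2(D)}=1$; thus $|u|$ is again a minimizer, i.e. a nonnegative weak eigenfunction for $\mu_1$. The remaining, and genuinely most delicate, step is to upgrade $|u|\ge0$, $|u|\not\equiv0$ to $|u|>0$ a.e. I expect this to come from a nonlocal strong maximum principle for $\mathcal{L}$ with Robin data --- equivalently, from the irreducibility (positivity improvement) of the symmetric sub-Markovian semigroup $(T_t)$ generated by $Q_\beta$: because the L\'evy density $\nu$ is strictly positive on $\R^d\setminus\{0\}$ (e.g. $\nu(z)=C_{d,\alpha}|z|^{-d-2\alpha}$), the form $\mathcal{E}$ admits no nontrivial invariant set, so $(T_t)$ sends nontrivial nonnegative functions to a.e. strictly positive ones; applying this to $T_t|u|=e^{-\mu_1 t}|u|$ gives $|u|>0$ a.e., so the first eigenfunction can indeed be chosen strictly positive. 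Apart from this irreducibility input (and the compact embedding borrowed from \cite{foghem2022general}), the argument is the standard Rayleigh-quotient machinery transplanted to $V_\nu(D\mid\R^d)$.
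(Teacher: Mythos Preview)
Your proof is correct but follows a genuinely different route from the paper's. For the variational formula, the paper does \emph{not} run the direct method: instead it takes the Robin eigenbasis $\{\psi_k\}$ of $L^2(D)$ (already available from \cite{foghem2022general}), shows that $\{\psi_k/\mu_k^{1/2}\}$ is orthonormal for the inner product $Q_\beta$, and then expands any normalized $u=\sum_k d_k\psi_k$ to compute $Q_\beta(u,u)=\sum_k \mu_k d_k^2\ge\mu_1$ directly. Your approach via a minimizing sequence, compact embedding, weak lower semicontinuity and Euler--Lagrange is the standard variational construction of the principal eigenpair from scratch; it is more self-contained (it does not presuppose the eigenbasis exists) at the cost of invoking the compact embedding explicitly, whereas the paper's argument is shorter precisely because it leans on the spectral result already cited. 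For strict positivity, both arguments first pass to $|u|$ via $(|u|(x)-|u|(y))^2\le(u(x)-u(y))^2$; the paper then observes $\mathcal{L}|u|=\mu_1|u|\ge0$ and cites the nonlocal strong maximum principle of Jarohs--Weth \cite{jarohs2019strong}, while you use positivity improvement of the sub-Markovian semigroup applied to $T_t|u|=e^{-\mu_1 t}|u|$. These are equivalent mechanisms (irreducibility of the form versus a pointwise maximum principle), and both require the full support of $\nu$ that you flag; the Jarohs--Weth citation is a slightly cleaner black box here, but your semigroup route is equally valid.
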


\subsubsection{The semigroup generated by $\mathcal{L}$}

The operator $\mathcal{L}$  with the corresponding nonlocal boundary conditions can be shown to generate a strongly continuous semigroup on an appropriate $L^2$ space. There are two possible approaches to the choice of the required $L^2$ space. 

In the first approach (see e.g. \cite{ClausWarma} for the case of the fractional Laplacian) we may modify the boundary condition \eqref{model1:2} to
\begin{eqnarray}\label{BC-MOD}
    \mathcal{N}u+ \beta u =0, \,\,\,\, x \in \R^{d} \setminus \bar{D}
\end{eqnarray}
and then we define an extension of a function $u : D \to \R$ to a function $\bar{u} : \R^{d} \to \R$ in such a way so that the external boundary condition is automatically satisfied.  Note that the modified boundary condition carries no information as to what happens for $x \in \partial D$ but this is of little importance if we are only interested in considering $u$ as an $L^2(D)$ object since $meas(\partial D)=0$.
For the case of the Robin boundary condition studied here the corresponding extension can be defined as
\begin{equation}\label{ROBIN-EXT}
    \begin{aligned}
        \bar{u}(x) = \left\{  \begin{array}{ccc}
        u(x) &  x \in D, \\
        \frac{1}{\rho(x) + \beta(x)}  \int_{D} u(y) k(x,y) dy & x \in  \R^d \setminus D,
        \end{array} \right .
    \end{aligned}
\end{equation}
where $\rho(x):=\int_{D}k(x,y) dy$ and let $\mathcal{I}_{R}$ be the corresponding extension operator defined by $\mathcal{I}_{R}u = \bar{u}$.  It can easily be seen (see e.g.  in \cite[Lemma 3.15]{ClausWarma} for the case of the fractional Laplacian) the $\bar{u}$ satisfies the exterior boundary condition on  $\R^d \setminus \bar{D}$, for any $u$. Moreover, $u$ is well defined for $u \in L^2(D)$ (note again that $meas(\partial D)=0$). We can then define the bilinear form 
\begin{eqnarray*}
    \mathcal{E}_{R}(u,v)=Q_{\beta}(\bar{u},\bar{v})
\end{eqnarray*}
which is well defined on
\begin{eqnarray*}
    D(\mathcal{E}_{R})=\{ u \in L^{2}(D) \,\, \mid \,\, \bar{u} \in V_{\nu}(D \mid \R^d) \}.
\end{eqnarray*}
To this bilinear form we can associate an operator $\mathcal{L}_{R}$ defined as in terms of the weak formulation of the problem \eqref{A1} with the (modified) Robin boundary condition \eqref{BC-MOD} as follows: Consider any $u \in L^{2}(D)$, such that its Robin extension $\bar{u}=\mathcal{I}_{R}u$ as defined by \eqref{ROBIN-EXT} satisfies the weak formulation of \eqref{A1} with boundary condition \eqref{BC-MOD} for some function $f \in L^{2}(D)$: Then we define the operator $\mathcal{L}_{R}$ in terms of 
\begin{eqnarray*}
L^{2}(D) \ni u \mapsto \mathcal{L}_{R} u := f, \,\,\, \mbox{where} \,\,\, \bar{u}=\mathcal{I}_{R}u \,\,\, \mbox{solves} \,\,\, Q_{\beta}(\bar{u},\bar{v})=\langle f, v \rangle, \,\,\, \forall \, v \in L^2(D), \,\, \bar{v}=\mathcal{I}_{R}v.
\end{eqnarray*}
The above definition indicates that the appropriate domain for this operator will be
\begin{eqnarray*}
    D(\mathcal{L}_{R})=\{ u \in L^{2}(D) \,\, | \,\, \bar{u} \in V_{\nu}(D \mid \R^d) \,\, \exists \, f \in L^{2}(D) \,\, s.t. \,\, \bar{u} \,\, \mbox{is a weak solution of} \,\, \eqref{A1} \, \mbox{with} \, \eqref{BC-MOD}\}.
\end{eqnarray*}

It can be shown (see in \cite[Theorem 3.18]{ClausWarma} for the fractional Laplacian) that the form $\mathcal{E}_{R}$ is closed, symmetric and densely defined on $L^{2}(D)$, hence the corresponding operator $\mathcal{L}_{R}$ is a self adjoint operator generating a strongly continuous semigroup $\mathcal{T}_t^{R}=e^{-t {\mathcal L}_R}$ on $L^{2}(D)$ which moreover is positivity preserving and can be extended to a contraction semigroup on $L^{p}(D)$ for all $p \in [1,\infty]$, and   is strongly continuous for  $p \in [1, \infty)$.

In the second approach (see \cite{foghem2022general}) we do not consider directly the idea of a Robin extension but rather consider the problem on the whole of $\R^d$ and making sure that the exterior condition holds in terms of the weak formulation \eqref{WEAK-FORMULATION}. It can then be shown (see in \cite[Theorem 2.25]{foghem2022general} ) that  the bilinear form $(\mathcal{E}, V_{\nu}(D | \R^d))$ is a regular Dirichlet form on $L^2(\R^d ; \nu')$  where $\nu'$ is any of the measures
\begin{eqnarray*}
    &&\tilde{\nu}(x)= \int_{B} \min(1, \nu(x-y)) dy, \\
    &&\bar{\nu}(x)=ess \inf_{y \in B} \nu(x-y), \\
    &&\nu^{*}(x)=\nu(R(1+|x|)),
\end{eqnarray*}
where $B \subset \R^d$  is bounded and open and $R>1$  is an arbitrary fixed number. The need for the weight is so that the asymptotic behaviour of the function is controlled, since now we consider the function $u$ as a function on the whole of $\R^d$ and not just on $D$ as above. 
Here again, by the properties of Dirichlet forms the operator $\mathcal{L}$ generates an operator semigroup $\mathcal{T}_t=e^{-t \mathcal{L}}$ on $L^2(\R^d ; \nu')$, which is strongly continuous and can be extended to a contraction semigroup on $L^{p}(D ; \nu')$ for all $p \in [1, \infty]$ (by the Beurling-Deny conditions).

For the needs of this work we may consider either of the above semigroups which will be invariably denoted by $\mathcal{T}^*_t$ acting on $L^2$ (which may be either $L^{2}(D)$ or $L^2(\R^d ; \nu')$).

Moreover, we note that here we are interested in the family of operators $-\frac{1}{2}k^2(t) \mathcal{L}$.   This family of operators generates a evolution family as follows:

Define the functions
\bge
&&K(t):=\frac{1}{2}\int_0^t k^2(\tau)\,d\tau,\; A(t):=\frac{1}{2}\int_0^t a^2(\tau)\,d\tau,\label{qp8}\\
&&K(t,s):=\frac{1}{2}\int_s^t k^2(\tau)\,d\tau,\quad A(t,s):=\frac{1}{2}\int_s^t a^2(\tau)\,d\tau\label{qp8b}
\ege
We may then define the evolution family $\mathcal{T}_{t,s}$ by its action 
\begin{eqnarray*}
    \mathcal{T}_{t,s} f=\mathcal{T}^*_{K(t,s)} f,
\end{eqnarray*}
on any $f \in L^2$.  Using this evolution family we may express the solution of the nonhomogeneous system
\begin{eqnarray*}
&&    \frac{\partial u}{\partial t} = -\frac{1}{2}k^2(t) \mathcal{L} u + f, \\
&&    u(0)=\phi,
\end{eqnarray*}
in terms of
\begin{eqnarray*}
    u(t)=\mathcal{T}_t \phi + \int_{0}^{t} \mathcal{T}_{t,s} f(s) ds,
\end{eqnarray*}
where we use the simplified notation $\mathcal{T}_{t}=\mathcal{T}_{t,0}$.

\subsection{The noise term}

The noise term   $N_t$ is of mixed form and it is defined as follows:
\bge\label{cnt}
N_t:=\int_0^t a(s) dB_s+\int_0^t b(s)dB^H_s, 
\ege
for some positive functions  $a(s), b(s),$ where $B_t$ and $B^H_t$ stand for the one-dimensional, real-valued Brownian and fractional Brownian motions, defined on a stochastic basis $\{ \Omega, \,{\mathcal F}, \,{\mathcal F}_t,\,\mathbb{P} \}$ with filtration $\left({\mathcal F}_t\right)_{t\in[0,T]}$ and Hurst index $\frac{1}{2}<H<1$. 

\subsubsection{Fractional Brownian motion and its properties}

Recall that the fractional Brownian motion $(B_{t}^{H}, \,\, t \ge 0)$ is a Gaussian process with the properties
\begin{itemize}
    \item[(i)] $B_{0}^{H}=0$ a.s.\, ,
    \item[(ii)]  ${\mathbb E}[B_{t}^{H}]=0,$
    \item[(iii)]  ${\mathbb E}[B_{s}^{H}B_{t}^{H}]=\frac{1}{2}(t^{2H} + s^{2 H} - |t-s|^{2 H})=:R_{H}(t,s)$, $t,s > 0.$
\end{itemize}
The above properties uniquely characterize the fractional Brownian motion in terms of its characteristic function
\begin{eqnarray*}
    \Phi ( \lambda ; t) = {\mathbb E}[e^{i \langle \lambda, t \rangle}] =
    {\mathbb E}[e^{i \sum_{k=1}^{n} \lambda_{k}, t_{k} } ] = e^{-\frac{1}{2} \langle C_{t} \lambda ,\lambda \rangle },
\end{eqnarray*}
where  the matrix  $C_{t} : = \bigg( {\mathbb E}[B_{t_i}^{H} B_{t_j}^{H}] \bigg)_{1 \le i,j \le n}$ and $\langle  \cdot , \cdot \rangle$ denotes the standard inner product in $\R^{n}$. For the choice $H=1/2$ we recover the standard Brownian motion. The fractional Brownian motion is a self similar process (in the sense that for any $a >0$ the processes $(B_{t}^{H}, \,\, t \ge 0)$ and $(a^{-H} B_{a t}^{H}, \,\,\, t \ge 0)$ have the same distribution) and has stationary increments, which are uncorrelated for $H=1/2$, positively correlated for $H \in (1/2,1)$ and negatively correlated for $H \in (0 , 1/2)$. In this work we will focus on  positively corellated  regime $H \in (1/2,1)$ in which the process $(X_n \,\, : n \in {\mathbb N}_ := (B_{n+1}^{H}-B_{n}^{H}, \,\,\, n \in {\mathbb N})$ displays long-range dependence, which corresponds to an aggregation behaviour useful for describing clustering phenomena in various applications ranging from physical phenomena to economics (see e.g. \cite{NUALART2006, Yann2005}).  
Note that for fractional Brownian motions with $\frac{1}{2} < H <1$ it holds that $d \langle B_{t}^{H}, B_{t}^{H} \rangle =0$. On the other hand, for the standard Brownian motion $d \langle B_{t}, B_{t} \rangle =dt$.

\subsubsection{Stochastic integration with respect to fractional Brownian motion for $H >1/2$}

The fractional Brownian motion $(B_{t}^{H}, \,\,\, t \ge 0)$ is not a semimartingale for $H \ne 1/2$. For the process $(B_{t} + B_{t}^{H}, \,\,\, t \ge 0)$ though, it is interesting to note that when $H \in (3/2,1)$ it is equivalent in law to Brownian motion.

Eventhough the fractional Brownian motion is not a semimartingale a theory for stochastic integration can be developed in parallel to the corresponding theory for the Wiener process. The development of stochastic integration differs between the cases where $H<1/2$ and $H>1/2$; here we will focus in the latter case. We assume throughout the rest of the paper that $H >1/2$.
Moreover, there are different notions available for stochastic integration over the fBM, depending on the type of integrand in consideration. Various notions of stochastic integration over fBM include the pathwise integration, or the Maliavin calculus approach involving the Skorokhod integral. Different notions of stochastic integration result to integrals with different properties so we need to specify the particular concept of stochastic integration used to define the corresponding SPDE since the begining. In this paper we will use the pathwise approach.

The pathwise stochastic integral $\int_{0}^{T} u_{t} dB_{t}^{H}$ can be defined for a process $(u_t \,\,\, t \ge 0)$ with $\beta$-H\"older continuous trajectories, for $\beta > 1- H$ in the sense of Young integration as the limit of the Riemann-Stieltjes sums. Let us consider the sequence of partitions $\pi_{n}=\{0 =t_{0}^{n} <t_{1}^{n} < \cdots < t_{k_{n}}^{n}=T\}$ such that $|\pi_n| \to 0$ for $n \to \infty$. The pathwise stochastic integral is  defined as
\begin{eqnarray}\label{PATHWISE-INT}
    \int_{0}^{T} u_{t} dB^{H}_{t} = \lim_{n\to \infty} \sum_{k=1}^{k_n} u_{t_{k-1}^{n}} \left(B_{t_{k}^{n}}-B_{t_{k-1}^{n}}\right).
\end{eqnarray}
The pathwise approach follows a simple transformation rule for pointwise transformations of stochastic integrals in terms of smooth functions $F$.
In particular (see \cite[Lemma 2.7.3 page 182]{M08}) if $F \in C^2$ then
\begin{eqnarray}\label{ITO-SIMPLE-FRAC}
    F(B_{t}^{H})=F(0) + \int_{0}^{t} F'(B_{s}^{H}) dB_{s}^{H}.
\end{eqnarray}
Throughout this work, we will use the Banach space $\mathcal{B}^{\beta,2}\left(\left[0,t\right], L^2(D) \right)$, which consists of all measurable functions $u:[0,t] \rightarrow L^2(D)$ for which the norm $\Vert \cdot \Vert_{\beta,2}$ is defined, i.e., 
  $$\Vert u \Vert_{\beta,2} ^2= \left( \esup_{s \in [0,t]} \Vert u(\cdot,s)\Vert_2 \right) ^2 + \int_0^t \left( \int_0^s \frac{\Vert u(\cdot,s)- u(\cdot,r)\Vert_2}{(s-r)^{\beta+1}} dr \right)^2 ds <+ \infty, $$
 where $\Vert \cdot \Vert_2$ is the usual norm in $L^2(D)$,  cf. \cite{MN03, M08, Z}.  The requirement that $u \in \mathcal{B}^{\beta, 2} \bigl( [0,\tau],L^2(D) \bigr)$ for some  $\beta\in(1-H,1/2)$ 
 ensures that the stochastic integral with respect to $B_t^H$ in \eqref{PATHWISE-INT} exists as a generalized Stieltjes integral in the sense of \cite{Z}; see also \cite[Proposition 1]{NV06}.

As far as the mixed process $N_t$ is concerned, the stochastic integral with respect to $B_{t}^{H}$ will be considered in the above pathwise sense, whereas the stochastic integral with respect to $B_{t}$ will be considered in the It\^o sense, i.e. in the Riemann-Stieltjes sum \eqref{PATHWISE-INT} we must set $H=1/2$ and moreover, the convergence of the  Riemann sum in \eqref{PATHWISE-INT}  must be considered in the $L^{2}(\Omega, \mathbb{P})$ sense. This leads to a modification to the It\^o formula \eqref{ITO-SIMPLE-FRAC} to
\begin{eqnarray}
F(B_{t})=F(0) + \int_{0}^{t} F'(B_{s}) dB_{s} + \frac{1}{2} \int_{0}^{t} F^{\prime\prime}(B_{s}) ds.
\end{eqnarray}
Note that the last term arises because of the interpretation of the limit for the It\^o integral for $H=1/2$ in the $L^2$ sense.

The It\^o lemma can be generalized for composition of a smooth function $F$ with more complicated processes of the form $Y_{t}^{i}=\int_{0}^{t} u_{i}(s) dB_{s}^{H_i}$, for $H_{i} \in [1/2, 1)$ or linear combinations of such processes. 
The following It\^o's lemma will be used (see \cite[Theorem 2.7.3 page 184]{M08} ): Let $F : \R^{k} \to \R$ be a $C^2-$function  and let $Y^{(i)}$, $i=1, \cdots, k$ be It\^o processes driven by the mixed process $N_{t}$. Then, 
\bge\label{ITO}
F(Y^{(1)}_t, \cdots, Y^{(k)}_t)&&=F(Y^{(1)}_0, \cdots, Y^{(k)}_0) + \int_{0}^{t} \sum_{i=1}^{k} \frac{\partial F}{\partial y_{i}}(Y^{(1)}_s, \cdots, Y^{(k)}_s) dY^{(i)}_{s}\nonumber \\&&+ \frac{1}{2} \int_{0}^{t} \sum_{i,j=1}^{k} \frac{\partial ^2 F}{\partial y_i \partial y_j}(Y^{(1)}_s, \cdots, Y^{(k)}_s)  d\langle Y^{(i)}_{s},Y^{(j)}_{s}\rangle.
\ege
In the above formula the  rules  $d \langle B_{t}^{H}, B_{t}^{H} \rangle =0$, $d \langle B_{t}, B_{t} \rangle =dt$ and $d \langle B_{t}^{H}, B_{t} \rangle =0$ will be used for the calculation of the quadratic covariation processes  $d\langle Y^{(i)}_{s},Y^{(j)}_{s}\rangle$. For example, if $Y^{(i)}_t = \int_{0}^t a_{i}(s) dB_s + \int_{0}^t b_{i}(s) dB^{H}_s$, then $d\langle Y^{(i)}_{s},Y^{(j)}_{s}\rangle = a_{i}(s) a_{j}(s) ds$ since all other covariation processes vanish except for $d \langle B_{t}, B_{t} \rangle =dt$. This is because we consider the case $H>1/2$ for the fractional Brownian motion.

\subsubsection{The Malliavin derivative}

We close this section by defining the Malliavin derivative that will be used in section \ref{upq}.

To define this we will consider as $\Omega := C_{0}([0,T], \R)$ the space of continuous functions $f : [0,T] \to \R$ vanishing at $0$, equipped with the supremum norm, and set $\mathbb{P}$ the  unique probability measure on $\Omega$ such that the canonical process $( B_{t}^{H}, \,\,\, t \in [0,T])$ is the fBM with Hurst exponent $H$.

We define the square integrable Volterra kernel
$G^H$  for $H>\frac{1}{2}$ by
\bge\label{te2}
G^H(t,s):=\begin{cases}
C_H s^{1/2-H}\int_s^t (\si-s)^{H-3/2} \si^{H-1/2}\, d\si\quad \mbox{if}\quad t>s,\\0\quad\mbox{if}\quad t\leq s,
\end{cases}
\ege
where
\bgee
C_H=\left[\frac{H (2H-1)}{\mathcal{B}(2-2H, H-1/2)}\right]^{\frac{1}{2}},
\egee
and $\mathcal{B}$ stands for the usual beta function, c.f. section 5.1.3 in \cite{N06} for more details. Note that in this case $B$ and $B^H$ are dependent since $B^H$ can be represented as
\bgee
B_t^H=\int_0^t G^H(t,s) dB_s.
\egee
We then express the auto-covariance function of the fBM in terms of
\begin{eqnarray*}
    R_{H}(t,s)=\int_{0}^{\min(s,t)} G_{H}(t,r)G_{H}(s,r) dr.
\end{eqnarray*}
Let $\mathcal{S}$ be the space of step functions on $[0,T]$ and let $\mathcal{H}$ be the closure of this space with respect to the scalar product $\langle {\bf 1}_{[0,s]}, {\bf 1}_{[0,t]} \rangle_{H} =R_{H}(s,t)$.  We define the mapping ${\bf 1}_{[0,t]} \mapsto B_{t}^{H}$ on $\mathcal{S}$ and its extension to an isometry between $\mathcal{H}$ and the Gaussian space $H_{1}(B^{H})$ spanned by $B^{H}$, denoted by $\phi \mapsto B^{H}(\phi)$.


We can now define the Malliavin derivative $D$ with respect to fBM. Consider any smooth cylindrical random function of the form $F=f(B^{H}(\phi))$ where $\phi \in \mathcal{H}$, and $f \in C_{b}^{\infty}(\R)$. The Malliavin derivative $DF$ is the random variable in $\mathcal{H}$ defined by
\begin{eqnarray*}
    \langle DF, h \rangle_{\mathcal{H}} := \frac{d f}{dx}(B^{H}(\phi)) \langle \phi, h \rangle_{\mathcal{H}} = \left .\frac{d}{d\epsilon} f(B^{H}(\phi) + \epsilon \langle \phi, h \rangle_{\mathcal{H}} )\right\vert_{\epsilon=0},
\end{eqnarray*}
which can be interpreted as the generalization of the directional derivative along the possible paths of the fBM.
The derivative operator $D$ is a closable operator from $L^{p}(\Omega)$ into $L^{p}(\Omega ; \mathcal{H})$ for any $p \ge 1$. It can be used to define generalizations of Sobolev spaces. The most commonly used of this spaces is ${\mathbb D}^{1,2}$ which is the closure of the space of cylidrical smooth random variables with respect to the norm
\begin{eqnarray*}
    \| F \|_{{\mathbb D}^{1,2}} = \bigg( {\mathbb E}[|F|^2]  + {\mathbb E}[\| DF\|_{\mathcal{H}}^2] \bigg)^{1/2}.
\end{eqnarray*}
In the present setting the Malliavin derivative will be intepreted as a stochastic process $(D_{t}F, \,\,\, : \,\, t \in [0,T])$ (see \cite{N06} Section 1.2.1 and Chapter 5).

\section{Local solutions }\label{lex}

 We start by providing a precise definition of the notion of quenching time that was alluded to in the introduction. 
\begin{defn} \label{def1}
A stopping time $\tau: \Omega \to (0,\infty)$ with respect to the filtration $\{\mathcal{F}_t, t\geq 0\}$ is a quenching time of a solution $z$ of \eqref{model1:1} -- \eqref{model1:3} if 
\bgee
\lim_{t\to \tau} \inf_{x\in D} \vert z(x,t)\vert=0,  \quad\mbox{a.s. on the event}\quad \{\tau<\infty\}, 
\egee
or equivalently
\bgee
\mathbb{P}\left[\inf_{(x,t) \in D\times  [0,\tau)}\vert z(x,t)\vert >0\right]=1, \quad\mbox{a.s. on the event}\quad \{\tau<\infty\}.
\egee
We will write $\tau=+\infty$ if $\tau>t$ for all $t>0.$
\end{defn}


Next we introduce the closely related to \eqref{model1:1} -- \eqref{model1:3} RPDE problem
\bge
&&\frac{\partial \v}{\partial t}(x,t) =\left(-\frac{1}{2
}k^2(t) \mathcal{L} \v(x,t) -\frac{1}{2} a^2(t) \v(x,t) - e^{N_t} g(x, e^{-N_t}\v(x,t))\right) ,
 \quad x\in D,\; t>0,\qquad \label{RGLSP1} \\
&& \mathcal{N}\v(x,t)+\beta(x)\v(x,t)=0, \quad  x\in  D^c,\; t>0, \label{RGLSP2}\\
&& 0\leq \v(x,0)=z_0(x)<1, \quad x \in D. \label{RGLSP3}
\ege

\begin{defn}[Weak solutions] \hfill
\begin{itemize}
\item[(i)] The random field $\{ \u(x, t) \,\, : \,\, t \ge 0, \,\, x \in D \}$  with values in $L^2(D)$ is a weak solution of  \eqref{model1:1} -- \eqref{model1:3} if 
\begin{eqnarray*}
\langle \u(\cdot, t), \phi \rangle_{D}&& =\langle \u_0 , \phi\rangle_{D} + \int_{0}^{t} \bigg( - \frac{1}{2} k^2(s) \langle \u(\cdot, s), \mathcal{L}^{*}\phi \rangle_{D} - \langle g\left(\cdot,\u(\cdot, s)\right) , \phi \rangle_{D} \bigg) ds \\
&&- \int_{0}^{t} \langle \u(\cdot, s), \phi \rangle_{D} dN_{s}, \,\,\,\, \forall \, \phi \in D(\mathcal{L}^{*}),
\end{eqnarray*}
where $\mathcal{L}^{*}$  stands for the adjoint operator of $\mathcal{L}^.$
\item[(ii)] The random field  $\{ \v(x,t) \,\, : \,\, t \ge 0, \,\, x \in D \}$  with values in $L^2(D)$ is a weak solution of  \eqref{RGLSP1}--\eqref{RGLSP3}   if 
\bge\label{qp7}
&&\langle \v(\cdot, t), \phi \rangle_{D} =\langle \u_0 , \phi\rangle_{D}\no + \int_{0}^{t} \bigg(-  \frac{1}{2} k^2(s) \left\langle \v(\cdot, s), \mathcal{L}^{*}\phi \right\rangle_{D}- \frac{1}{2}a^2(s) \langle \v(\cdot, s),\phi \rangle_{D}\no\\&&  - e^{N_t}\left\langle  g\left(\cdot,e^{-N_s} \v(\cdot, s)\right) , \phi \right\rangle_{D} \bigg) ds,
 \,\,\,\, \forall \, \phi \in D(\mathcal{L}^{*}).\qquad
\ege
\end{itemize}
Note that for the present case $\mathcal{L}=\mathcal{L}^{*}.$
\end{defn}

The mild solutions for the above equations can be defined in terms of the evolution family $\mathcal{T}$ generated by the family of operators  $-\frac{1}{2}k^2(t) \mathcal{L}.$
\begin{defn}[Mild solutions] \label{MILD-DEF} \hfill
\begin{itemize}
\item[(i)] The random field $\{ \u(x, t) \,\, : \,\, t \ge 0, \,\, x \in D \}$  with values in $L^2(D)$ is a mild solution of  \eqref{model1:1} -- \eqref{model1:3} if 
\begin{eqnarray*}
\u(x, t) = [\mathcal{T}_t \u_0](x)  + \int_{0}^{t} \left(  \mathcal{T}_{t,s}\left[  -  g\left(\cdot, \u(\cdot, s)\right) \right](x) \right) ds 
+ \int_{0}^{t} \left[ \mathcal{T}_{t,s}  \u(\cdot, s) \right](x) dN_{s}, \,\,\,\, \forall \, (x,t) \in D \times [0,T].
\end{eqnarray*}
\item[(ii)] The random field  $\{ \v(x,t) \,\, : \,\, t \ge 0, \,\, x \in D \}$  with values in $L^2(D)$ is a mild solution of  \eqref{RGLSP1}--\eqref{RGLSP3} if 
\bgee
 \v(\cdot, t) =\mathcal{T}_t \v_0 + \int_{0}^{t} \mathcal{T}_{t,s}\left[ - \frac{1}{2}a^2(s)  \v(\cdot, s) - e^{N_s} g\left(\cdot, e^{-N_s} \v(\cdot, s)\right)  \right](\cdot) ds.
\egee
\end{itemize}
\end{defn}
Note that in  Definition \ref{MILD-DEF} (ii) we may rewrite the mild solution in terms of the evolution family $\widetilde{\mathcal{T}}$ generated by the family of operators ${\mathcal L}(t)=\frac{1}{2}k^2(t) \mathcal{L} - \frac{1}{2} a^{2}(t) \mathcal{I},$ where $\mathcal{I}$ represents the unit operator. This will lead to the equivalent mild form
\begin{equation}\label{MILD-MILD}
\begin{aligned}
\v(x,t)=e^{-A(t)} \mathcal{T}_t \v_0(x) 
- \int_{0}^{t} e^{N_s -A(t,s) }[ \mathcal{T}_{t,s} g\left(\cdot, e^{-N_s} \v(\cdot , s)\right)](x) ds,
\end{aligned}
\end{equation}
where  $A(t,s), A(t)$
are defined by \eqref{qp8} and \eqref{qp8b} respectively.

The following proposition allows us to reduce the SPDE problem \eqref{model1:1}--\eqref{model1:3} to the RPDE problem \eqref{RGLSP1}--\eqref{RGLSP3}.

\begin{prop}\label{SPDE-RPDE} $\u$ is a weak solution of the SPDE problem \eqref{model1:1}--\eqref{model1:3} if and only if $\v:= e^{N_t} \u$ is a weak solution of the RPDE problem \eqref{RGLSP1}--\eqref{RGLSP3}.
\end{prop}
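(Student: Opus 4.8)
The plan is to obtain both implications from one application of the It\^o formula \eqref{ITO} in dimension $k=2$, taken with $F(y_1,y_2)=y_1e^{y_2}$ for the forward direction and with $F(y_1,y_2)=y_1e^{-y_2}$ for the converse, applied to the pair of It\^o processes $Y^{(1)}_t=\langle z(\cdot,t),\phi\rangle_D$ (respectively $\langle v(\cdot,t),\phi\rangle_D$) and $Y^{(2)}_t=N_t$, for each fixed test function $\phi\in D(\mathcal{L}^*)$. Since each of the two weak-solution notions is a family of scalar identities indexed by $\phi$, it suffices to transform one such identity at a time. Throughout, the relation between the two unknowns is $v(\cdot,t)=e^{N_t}z(\cdot,t)$, so that $v(\cdot,0)=e^{N_0}z_0=z_0$ because $N_0=0$.

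\textbf{The computation.} For the forward direction I would start from the weak formulation of \eqref{model1:1}--\eqref{model1:3}, which says that
\[
dY^{(1)}_t=\Big(-\tfrac12 k^2(t)\langle z(\cdot,t),\mathcal{L}^*\phi\rangle_D-\langle g(\cdot,z(\cdot,t)),\phi\rangle_D\Big)\,dt-Y^{(1)}_t\,dN_t .
\]
The drift is of finite variation, so the only contribution to the covariations comes from the $-Y^{(1)}_t\,dN_t$ term; using the rules $d\langle B^H,B^H\rangle=d\langle B^H,B\rangle=0$, $d\langle B,B\rangle=dt$ from Section \ref{pre} one gets $d\langle N,N\rangle_t=a^2(t)\,dt$ and $d\langle Y^{(1)},N\rangle_t=-Y^{(1)}_t a^2(t)\,dt$. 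Substituting $\partial_{y_1}F=e^{y_2}$, $\partial_{y_2}F=y_1e^{y_2}$, $\partial^2_{y_1y_1}F=0$, $\partial^2_{y_1y_2}F=e^{y_2}$, $\partial^2_{y_2y_2}F=y_1e^{y_2}$ into \eqref{ITO}, the two $dN_t$-terms cancel and the second-order terms collapse to $-\tfrac12 a^2(t)e^{N_t}Y^{(1)}_t\,dt$. Replacing $z(\cdot,s)=e^{-N_s}v(\cdot,s)$, hence $e^{N_s}\langle z(\cdot,s),\mathcal{L}^*\phi\rangle_D=\langle v(\cdot,s),\mathcal{L}^*\phi\rangle_D$ and $e^{N_s}Y^{(1)}_s=\langle v(\cdot,s),\phi\rangle_D$, and integrating, yields exactly \eqref{qp7}. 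For the converse I would run the same computation with $F(y_1,y_2)=y_1e^{-y_2}$: now $Y^{(1)}_t=\langle v(\cdot,t),\phi\rangle_D$ has no $dN_t$ part, so $d\langle Y^{(1)},N\rangle_t=0$ and only $\tfrac12 Y^{(1)}_t e^{-N_t}a^2(t)\,dt$ survives among the second-order terms; setting $Z_t=\langle z(\cdot,t),\phi\rangle_D=e^{-N_t}Y^{(1)}_t$ and substituting $v=e^{N_t}z$, the $-\tfrac12 a^2(t)\langle v,\phi\rangle_D$ drift turns into $-\tfrac12 a^2(t)Z_t$ and cancels that term, leaving $dZ_t=(-\tfrac12 k^2(t)\langle z,\mathcal{L}^*\phi\rangle_D-\langle g(\cdot,z),\phi\rangle_D)\,dt-Z_t\,dN_t$, which is the weak formulation of \eqref{model1:1}--\eqref{model1:3}.

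\textbf{Main obstacle.} I expect the difficulty to be technical justification rather than the algebra. First, \eqref{ITO} is stated for $F\in C^2$, while $e^{\pm y_2}$ has unbounded derivatives; this I would handle by localizing along the stopping times $\tau_n=\inf\{t\ge 0:|N_t|\ge n\}$ (which increase to $+\infty$ a.s.), applying \eqref{ITO} on $[0,t\wedge\tau_n]$ to a $C^2$ truncation of $F$ and letting $n\to\infty$. Second, one must check that $Y^{(1)}_t=\langle z(\cdot,t),\phi\rangle_D$ genuinely qualifies as an admissible It\^o process driven by $N_t$ in the sense required by \eqref{ITO}: the pathwise (Young) integral against $B^H$ requires $\beta$-H\"older trajectories with $\beta>1-H$, which follows from the standing assumption $z\in\mathcal{B}^{\beta,2}\bigl([0,\tau],L^2(D)\bigr)$ with $\beta\in(1-H,1/2)$ recorded in Section \ref{pre}, while adaptedness together with the same regularity control the It\^o integral against $B$; the analogous regularity of $v$ is inherited through $v=e^{N_t}z$. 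Once these points are settled, the proposition is exactly the displayed computation read in both directions.
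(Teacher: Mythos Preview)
Your proposal is correct and follows essentially the same route as the paper: both arguments apply the It\^o formula \eqref{ITO} to the scalar process $e^{N_t}\langle z(\cdot,t),\phi\rangle_D$ (and to $e^{-N_t}\langle v(\cdot,t),\phi\rangle_D$ for the converse), observe that the $dN_t$ contributions cancel, and identify the surviving $-\tfrac12 a^2(t)$ It\^o correction with the extra drift term in \eqref{RGLSP1}. The only difference is bookkeeping: the paper splits the product into five one-dimensional coordinates (separating the $B$-, $B^H$- and Riemann-integral pieces of both $N_t$ and the weak formulation) and applies \eqref{ITO} with $k=5$, whereas you keep $Y^{(1)}=\langle z,\phi\rangle_D$ and $Y^{(2)}=N_t$ bundled and apply \eqref{ITO} with $k=2$; your packaging is more economical and your attention to the localization/regularity hypotheses needed to invoke \eqref{ITO} is a point the paper leaves implicit.
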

\begin{proof}
To show that we simply need to apply It\^o's formula \eqref{ITO} to the process $Z_{t}:=e^{N_t} \langle \u(\cdot, t), \phi \rangle_{D}$,  for $\u$ being a weak solution of \eqref{model1:1}--\eqref{model1:3}. Using Definition \ref{MILD-DEF}$(i)$ we note  that we may express 
$$Z_{t}=F(Y^{(1)}_t,\bar{Y}^{(2)}_t, \hat{Y}^{(3)}_t, Y^{(4)}_t,\bar{Y}^{(5)}_t),$$ for $F(y_1,y_2,y_3,y_4,y_5)=e^{y_1+y_2} (y_3-y_4-y_5)$,  with
\begin{eqnarray*}
Y^{(1)}_t &&= \int_{0}^{t} a(s) dB_{s}, \,\,\,\,\, Y^{(4)}_{t} = \int_{0}^{t} \langle \u(\cdot, s), \phi \rangle_{D} a(s) dB_s, \\
\bar{Y}^{(2)}_{t}&&=\int_{0}^{t} b(s) dB^{H}_{s}, \,\,\,\,\, \bar{Y}^{(5)}_{t} = \int_{0}^{t} \langle \u(\cdot, s), \phi \rangle_{D} b(s) dB^{H}_s, \\
\hat{Y}^{(3)}_t&&=\langle \u_0, \phi\rangle_{D} + \int_{0}^{t} \left(-  \frac{1}{2} k^2(s) \langle \u(\cdot, s), \mathcal{L} \phi \rangle_{D} - \langle g(\u(\cdot, s)), \phi\rangle_{D} \right) ds.
\end{eqnarray*}
In order to clarify the exposition we use the following scheme in the notation: We denote the terms depending on the Wiener process by $Y^{(i)}$, the terms depending on the fractional Brownian motion by $\bar{Y}^{(i)}$ and by $\hat{Y}^{(i)}$ the process that can be expressed as a Riemann integral over $t$. Clearly, (a)  by independence there is no quadratic covariation between the processes $Y^{(i)}$ and $\bar{Y}^{(i)}$, (b) by the fact that $H >1/2$ there is no quadratic covariation between the processes $\bar{Y}^{(i)}$ and $\bar{Y}^{(j)}$ - even for $i=j$, and (c) by the standard rules of stochastic calculus there is no quadratic covariation between the processes $Y^{(i)}, \hat{Y}^{(i)}$ and $\bar{Y}^{(i)}, \hat{Y}^{(i)}$.  The only contributions in quadratic coveriation will be  from the processes $Y^{(i)}, Y^{(j)}$ (including $i=j$).  Hence, only the second derivatives $\frac{\partial^2 F}{\partial y_1^2}$ and $\frac{\partial^2 F}{\partial y_1\partial y_4}$ will contribute to the It\^o formula.  

A straightforward application of \eqref{ITO} taking into account the above comments yields that
\bge\label{ITO1}
&&F(Y^{(1)}_t,\bar{Y}^{(2)}_t, \hat{Y}^{(3)}_t, Y^{(4)}_t,\bar{Y}^{(5)}_t)=F(Y^{(1)}_0,\bar{Y}^{(2)}_0, \hat{Y}^{(3)}_0, Y^{(4)}_0,\bar{Y}^{(5)}_0)\no \\
&&+\int_{0}^{t} \left( F(Y^{(1)}_t,\bar{Y}^{(2)}_t, \hat{Y}^{(3)}_t, Y^{(4)}_t,\bar{Y}^{(5)}_t) (d Y^{(1)}_t +d\bar{Y}^{(2)}_t) + e^{ Y^{(1)}_t+\bar{Y}^{(2)}_t } d(\hat{Y}^{(3)}_t- Y^{(4)}_t-\bar{Y}^{(5)}_t) \right)\no \\
&&+\frac{1}{2} \int_{0}^{t}  \left( F(Y^{(1)}_t,\bar{Y}^{(2)}_t, \hat{Y}^{(3)}_t, Y^{(4)}_t,\bar{Y}^{(5)}_t) d\langle Y^{(1)}_t, Y^{(1)}_t  \rangle - 2  e^{Y^{(1)}_t+\bar{Y}^{(2)}_t } d\langle Y^{(1)}_t, Y^{(4)}_t  \rangle \right).
\ege
Moreover,  since $Y^{(1)}_t+\bar{Y}^{(2)}_t=N_t$, we see that
\begin{eqnarray*}
&& e^{Y^{(1)}_t+\bar{Y}^{(2)}_t } d(\hat{Y}^{(3)}_t- Y^{(4)}_t-\bar{Y}^{(5)}_t)\\&&=
 e^{N_t} \bigg ( -\frac{1}{2} k^2(s) \langle \u(\cdot, s), \mathcal{L} \phi \rangle_{D} - \langle g(\u(\cdot, s)), \phi\rangle_{D} \bigg) dt - e^{N_t} \langle \u(\cdot, t), \phi \rangle_{D} d N_t.
\end{eqnarray*}
Note that since $\u$ is a weak solution of the SPDE, 
\begin{eqnarray}\label{WEAK1}
\hat{Y}^{(3)}_t- Y^{(4)}_t-\bar{Y}^{(5)}_t = \langle \u(\cdot, t), \phi \rangle_D,
\end{eqnarray}
 it holds that 
\begin{eqnarray*}
F(Y^{(1)}_t,\bar{Y}^{(2)}_t, \hat{Y}^{(3)}_t, Y^{(4)}_t,\bar{Y}^{(5)}_t) (d Y^{(1)}_t +d\bar{Y}^{(2)}_t) = e^{N_t} \langle \u(\cdot, t) , \phi \rangle_{D} d N_t,
\end{eqnarray*}
hence, in the contribution of the first order terms in the It\^o formula, the stochastic integral over $N_t$ is eliminated to yield
\begin{eqnarray*}
&&\bigg( F(Y^{(1)}_t,\bar{Y}^{(2)}_t, \hat{Y}^{(3)}_t, Y^{(4)}_t,\bar{Y}^{(5)}_t) (d Y^{(1)}_t +d\bar{Y}^{(2)}_t) + e^{ Y^{(1)}_t+\bar{Y}^{(2)}_t } d(\hat{Y}^{(3)}_t- Y^{(4)}_t-\bar{Y}^{(5)}_t) \bigg) \\
&&=e^{N_t} \bigg (- \frac{1}{2} k^2(t) \langle \u(\cdot, t), \mathcal{L} \phi \rangle_{D} - \langle g(\u(\cdot, t)), \phi\rangle_{D} \bigg) dt.
\end{eqnarray*}
Noting that $d \langle Y_{t}^{(1)}, Y_{t}^{(1)}\rangle = a^2(t) dt$ and $d \langle Y_{t}^{(1)}, Y_{t}^{(4)}\rangle = a^2(t)  \langle \u(\cdot, t), \phi\rangle_{D} dt$, we see that (using again \eqref{WEAK1}) 
\begin{eqnarray*}
&&\frac{1}{2}  \bigg( F(Y^{(1)}_t,\bar{Y}^{(2)}_t, \hat{Y}^{(3)}_t, Y^{(4)}_t,\bar{Y}^{(5)}_t) d\langle Y^{(1)}_t, Y^{(1)}_t  \rangle - 2  e^{ Y^{(1)}_t+\bar{Y}^{(2)}_t } d\langle Y^{(1)}_t, Y^{(4)}_t  \rangle \bigg) \\
&&= - \frac{1}{2} e^{N_t}  a^2(t) \langle \u(\cdot, t), \phi\rangle_{D} dt.
\end{eqnarray*}
Substituting the above in \eqref{ITO1} and using once more \eqref{WEAK1} to express the first and second term as $e^{N_t} \langle \u(\cdot, t), \phi \rangle_{D}$ and $\langle \u_0, \phi\rangle_{D}$ respectively we obtain that
\begin{eqnarray*}
&&e^{N_t} \langle \u(\cdot, t), \phi \rangle_{D} =  \langle \u_0, \phi\rangle_{D}   \\
&&+\int_{0}^{t} \bigg( e^{N_s} \bigg ( -\frac{1}{2} k^2(s) \langle \u(\cdot, s), \mathcal{L} \phi \rangle_{D}   - \frac{1}{2}   a^2(s) \langle \u(\cdot, s), \phi\rangle_{D}-\langle g(\u(\cdot, s)), \phi\rangle_{D} \bigg)  \bigg) ds.
\end{eqnarray*}
Expressing the above in terms of $\v(x,t)=e^{N_t} \u(x,t)$ yields
\begin{eqnarray*}
&& \langle \v(\cdot, t), \phi \rangle_{D} =  \langle \u_0, \phi\rangle_{D} \\
&&+\int_{0}^{t}  \bigg (- \frac{1}{2} k^2(s) \langle \v(\cdot, s), \mathcal{L} \phi \rangle_{D}   - \frac{1}{2}   a^2(s) \langle \v(\cdot, s), \phi\rangle_{D}-\langle   e^{N_s} g(e^{-N_s} \v(\cdot, s)), \phi\rangle_{D} \bigg)   ds,
\end{eqnarray*}
which proves that $\v$ is a weak solution of the RPDE problem \eqref{RGLSP1}--\eqref{RGLSP3}.

To prove the converse, we follow a similar route, starting from a $\v$ satisfying Definition \ref{MILD-DEF} $(ii)$ and considering the process $\u =e^{-N_t} \v$ using It\^o's formula for the process $W_t=Q(Z^{(1)}_t,Z^{(2)}_t,Z^{(3)}_t)$ where 
\begin{eqnarray*}
&&Z^{(1)}_t=\int_{0}^{t} a(t) dB_t, \quad Z^{(2)}_t=\int_{0}^{t} b(t) dB_t^{H},  \\
&&Z^{(3)}_t= \langle \u_0, \phi\rangle_{D}   
+\int_{0}^{t}  \left (- \frac{1}{2} k^2(s) \langle \v(\cdot, s), \mathcal{L} \phi \rangle_{D}   - \frac{1}{2}   a^2(s) \langle \v(\cdot, s), \phi\rangle_{D}\right. \\
&& \left. -\langle   e^{N_s} g(e^{-N_s} \v(\cdot, s)), \phi\rangle_{D} \right)   ds,
\end{eqnarray*}
and $Q(z_1,z_2,z_3)=e^{-z_1-z_2} z_3$. The details are left to the reader. 
\end{proof}
Following the same reasoning as in \cite[Theorem 2.2]{DKLMT23} we can derive the equivalence between weak and mild solutions for the random problem \eqref{RGLSP1}--\eqref{RGLSP3}. Therefore the proof is omitted. 
\begin{prop}[Equivalence of weak  and mild  solutions]\label{ewms}
 $v$ is a weak solution of \eqref{RGLSP1}--\eqref{RGLSP3} in $[0,T]$ if and only $v$ is a mild solution of \eqref{RGLSP1}--\eqref{RGLSP3} in $[0,T].$
\end{prop}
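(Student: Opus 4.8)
The plan is to observe that, for each fixed $\omega\in\Omega$, the random problem \eqref{RGLSP1}--\eqref{RGLSP3} is a deterministic inhomogeneous linear parabolic equation for $v$ with a prescribed source: setting $F(s):=-\tfrac12 a^2(s)\,v(\cdot,s)-e^{N_s}g\bigl(\cdot,e^{-N_s}v(\cdot,s)\bigr)$, both the weak formulation \eqref{qp7} and Definition \ref{MILD-DEF}(ii) concern the single equation $\partial_t v=-\tfrac12 k^2(t)\mathcal L v+F$. On the interval under consideration $e^{-N_s}v(\cdot,s)$ is bounded and bounded away from $0$, so $g$ is evaluated away from its singularity and $F\in L^1\bigl([0,T];L^2(D)\bigr)$ (in fact $F$ lies in the relevant $\mathcal B^{\beta,2}$-type space); hence the nonlinearity plays no structural role and what must be proved is the standard weak--mild equivalence for the evolution family $\mathcal T$, carried out pathwise exactly as in \cite[Theorem 2.2]{DKLMT23}. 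Two structural facts about $\mathcal T$ will be used: $\mathcal T_{t,s}=\mathcal T^*_{K(t,s)}$ with $K$ as in \eqref{qp8}--\eqref{qp8b}, and the two-parameter semigroup identity $\mathcal T_{t,r}\mathcal T_{r,s}=\mathcal T_{t,s}$, which follows from the additivity $K(t,s)=K(t,r)+K(r,s)$ and the semigroup law for $\mathcal T^*=e^{-(\cdot)\mathcal L}$; moreover $\mathcal T^*_\tau$ leaves $D(\mathcal L^*)$ invariant, $\tfrac{d}{d\tau}\mathcal T^*_\tau\phi=-\mathcal L\mathcal T^*_\tau\phi$ for $\phi\in D(\mathcal L^*)$, and $\mathcal L=\mathcal L^*$.

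For mild $\Rightarrow$ weak, start from $v(t)=\mathcal T_t v_0+\int_0^t\mathcal T_{t,s}F(s)\,ds$ and pair with $\phi\in D(\mathcal L^*)$ to get $\langle v(t),\phi\rangle=\langle v_0,\mathcal T^*_{K(t)}\phi\rangle+\int_0^t\langle F(s),\mathcal T^*_{K(t,s)}\phi\rangle\,ds$. Differentiating in $t$, using $\partial_t K(t,s)=\tfrac12 k^2(t)$ and $\tfrac{d}{d\tau}\mathcal T^*_\tau\phi=-\mathcal L\mathcal T^*_\tau\phi$, and using strong continuity to evaluate the diagonal boundary term at $s=t$, one obtains $\tfrac{d}{dt}\langle v(t),\phi\rangle=-\tfrac12 k^2(t)\langle v(t),\mathcal L\phi\rangle+\langle F(t),\phi\rangle$; integrating over $[0,t]$ and expanding $F$ reproduces \eqref{qp7}, so $v$ is a weak solution.

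For weak $\Rightarrow$ mild --- the step I expect to be the main obstacle --- fix $t>0$ and $\phi\in D(\mathcal L^*)$ and set $w(s):=\mathcal T_{t,s}\phi=\mathcal T^*_{K(t,s)}\phi$, so that $w(s)\in D(\mathcal L^*)$ for all $s\in[0,t]$, $w(t)=\phi$, $w(0)=\mathcal T_t\phi$, and $\tfrac{d}{ds}w(s)=\tfrac12 k^2(s)\,\mathcal L w(s)$ (from $\partial_s K(t,s)=-\tfrac12 k^2(s)$). Formally, inserting $w(s)$ into the differential form of \eqref{qp7} and adding the $\langle v(s),\dot w(s)\rangle$ term, the occurrences $-\tfrac12 k^2(s)\langle v(s),\mathcal L w(s)\rangle$ and $+\tfrac12 k^2(s)\langle v(s),\mathcal L w(s)\rangle$ cancel, leaving $\tfrac{d}{ds}\langle v(s),w(s)\rangle=\langle F(s),w(s)\rangle$; integrating over $[0,t]$ gives $\langle v(t),\phi\rangle-\langle v_0,\mathcal T_t\phi\rangle=\int_0^t\langle F(s),\mathcal T_{t,s}\phi\rangle\,ds=\bigl\langle\int_0^t\mathcal T_{t,s}F(s)\,ds,\phi\bigr\rangle$ by Fubini, and density of $D(\mathcal L^*)$ in $L^2(D)$ then yields $v(t)=\mathcal T_t v_0+\int_0^t\mathcal T_{t,s}F(s)\,ds$, i.e. the mild form (equivalently \eqref{MILD-MILD}). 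The difficulty is that this product-rule computation is only formal, since $v$ has merely the limited time regularity provided by the $\mathcal B^{\beta,2}$-framework and cannot be differentiated in $t$ directly; the clean remedy is to first note that \eqref{qp7} yields $\langle v(t),\psi(t)\phi\rangle-\langle v_0,\psi(0)\phi\rangle=\int_0^t\bigl(\psi'(s)\langle v(s),\phi\rangle-\tfrac12 k^2(s)\psi(s)\langle v(s),\mathcal L\phi\rangle+\psi(s)\langle F(s),\phi\rangle\bigr)\,ds$ for all tensor test functions $\psi\in C^1([0,t])$, $\phi\in D(\mathcal L^*)$ (immediate from \eqref{qp7} and Fubini), and then to approximate $s\mapsto w(s)$ in $C^1\bigl([0,t];L^2(D)\bigr)\cap C\bigl([0,t];D(\mathcal L^*)\bigr)$ by finite sums of such tensors --- possible by strong continuity and analyticity of $\mathcal T^*$ together with continuity of $s\mapsto k^2(s)$ --- and pass to the limit termwise; a Steklov/mollification-in-time argument on \eqref{qp7} is an equivalent route. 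The remaining points are routine: measurability and $L^1$-in-time integrability of $F$ on the existence interval (guaranteed by the local well-posedness in $\mathcal B^{\beta,2}([0,T],L^2(D))$ and the non-singularity of $g(\cdot,e^{-N_s}v(\cdot,s))$ there), the invariance of $D(\mathcal L^*)$ under $\mathcal T^*_\tau$ and the generator identity for $\mathcal T^*$ (both from $\mathcal T^*$ being a strongly continuous self-adjoint semigroup generated by $-\mathcal L$), and the use of Fubini to move $\mathcal T_{t,s}$ inside the pairing.
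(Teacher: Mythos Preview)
Your proposal is correct and follows essentially the same approach as the paper: the paper itself omits the proof entirely, stating only that it proceeds ``following the same reasoning as in \cite[Theorem 2.2]{DKLMT23}'', which is precisely the reference you invoke and whose standard weak--mild equivalence argument (pairing with $\phi$ and differentiating for one direction, testing against $s\mapsto\mathcal T_{t,s}\phi$ and using the backward equation for the other) you have spelled out. Your identification of the time-regularity issue in the weak $\Rightarrow$ mild step and its resolution via tensor-product test functions and approximation is the expected rigorous justification behind the cited result.
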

\begin{prop}[Existence of local weak solutions]
There exists $T>0$ such that \eqref{model1:1}--\eqref{model1:3} has a mild solution in $L^{\infty}\left([0,T)\times D\right).$
\end{prop}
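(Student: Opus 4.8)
The plan is to lean on the two equivalences already in place. By Proposition~\ref{SPDE-RPDE}, $z$ is a weak solution of \eqref{model1:1}--\eqref{model1:3} if and only if $v:=e^{N_t}z$ is a weak solution of the random problem \eqref{RGLSP1}--\eqref{RGLSP3}, and by Proposition~\ref{ewms} the latter is the same as $v$ solving the mild identity \eqref{MILD-MILD}; the same It\^o computation as in the proof of Proposition~\ref{SPDE-RPDE} gives the corresponding weak--mild equivalence for the SPDE itself. Hence it suffices to produce, for some small $T>0$, a function $v\in L^\infty([0,T]\times D)$ solving \eqref{MILD-MILD}, after which $z:=e^{-N_t}v$ is the desired mild solution (and, since $N$ is bounded on $[0,T]$, it lies in $L^\infty([0,T)\times D)$). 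Crucially, \eqref{RGLSP1}--\eqref{RGLSP3} contains no stochastic integral, so this is a deterministic problem: I fix $\omega\in\Omega$ and solve \eqref{MILD-MILD} with $N_\cdot=N_\cdot(\omega)$, $A(\cdot,\cdot)=A(\cdot,\cdot)(\omega)$, $K(\cdot,\cdot)=K(\cdot,\cdot)(\omega)$ frozen.

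I then set up a Banach fixed-point argument. Since a.e.\ path of $N_t$ is $\beta$-H\"older on $[0,T]$ with $N_0=0$, and the scalar functions in \eqref{qp8}--\eqref{qp8b} are continuous with $K(t,s),A(t,s)\to0$ as $t-s\to0$, the quantities $\sup_{s\le T}|N_s(\omega)|$, $\sup_{t\le T}A(t)$, $\sup_{t\le T}K(t)$ all tend to $0$ as $T\to0$. Assuming $m_0:=\inf_D z_0>0$ (natural here, since the initial datum should be away from the quenching set), fix $0<m<m_0$ and $M>\|z_0\|_{L^\infty(D)}$. On $\{m\le z\le M\}$ the nonlinearity $g(x,\cdot)$ — e.g.\ $g(x,z)=\lambda\zeta(x)z^{-2}-\gamma z$ — is bounded, $|g(x,z)|\le C_g$, and Lipschitz, $|g(x,z_1)-g(x,z_2)|\le L_g|z_1-z_2|$, uniformly in $x$. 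Let $\Phi$ be the right-hand side of \eqref{MILD-MILD}, considered on the complete metric space
\[
\mathcal{X}_T=\bigl\{\,v\in L^\infty([0,T]\times D)\ :\ m\le v(x,t)\le M\ \text{ for a.e.\ }(x,t)\,\bigr\}
\]
with the $L^\infty([0,T]\times D)$ metric, which is nonempty since it contains the constant $z_0$.

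To check $\Phi(\mathcal{X}_T)\subseteq\mathcal{X}_T$ for $T$ small I use that $\mathcal{T}_{t,s}=\mathcal{T}^*_{K(t,s)}$ is a contraction on $L^\infty(D)$ and positivity preserving: then $0\le e^{-A(t)}\mathcal{T}_t z_0\le\|z_0\|_{L^\infty(D)}<M$, while positivity preservation and the sub-Markovian (Robin) structure give the one-sided bound $e^{-A(t)}\mathcal{T}_t z_0\ge m_0\,e^{-A(t)}\mathcal{T}^*_{K(t)}\mathbf{1}\ge m_0(1-o(1))$ as $T\to0$ (equivalently, by the maximum principle for the nonlocal Robin problem and continuity in $t$). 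The integral term in \eqref{MILD-MILD} is bounded in $L^\infty([0,T]\times D)$ by $T\,e^{\sup_{s\le T}|N_s|}C_g=o(1)$, so for $T$ small the sum lies in $[m,M]$. For the contraction, for $v_1,v_2\in\mathcal{X}_T$ the arguments $e^{-N_s}v_i$ stay in a fixed compact subset of $(0,\infty)$, and using the Lipschitz bound on $g$, the contraction property of $\mathcal{T}_{t,s}$ and $|e^{-N_s}v_1-e^{-N_s}v_2|\le e^{\sup|N|}|v_1-v_2|$,
\[
\|\Phi(v_1)-\Phi(v_2)\|_{L^\infty([0,T]\times D)}\le T\,e^{2\sup_{s\le T}|N_s|}L_g\,\|v_1-v_2\|_{L^\infty([0,T]\times D)},
\]
a strict contraction once $T$ is decreased further. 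Banach's theorem yields a unique $v\in\mathcal{X}_T$ solving \eqref{MILD-MILD}; it is uniformly positive, so $g(\cdot,e^{-N_\cdot}v)\in L^\infty$ and $z=e^{-N_\cdot}v$ is the sought mild solution. Measurability of $(\omega,x,t)\mapsto v$ (hence adaptedness of $z$) follows since the Picard iterates $\Phi^n(z_0)$ are measurable in $\omega$ — they are built from $N_\cdot(\omega)$, $A(\cdot,\cdot)(\omega)$, $K(\cdot,\cdot)(\omega)$ and the deterministic semigroup — and the convergence is uniform.

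The main obstacle is the singularity of $g$ at $z=0$: the whole scheme only works while the solution and its rescaling $e^{-N_s}v=z$ remain in a compact subinterval of $(0,\infty)$, since $z\mapsto z^{-2}$ is neither bounded nor Lipschitz near $0$ — and indeed \eqref{RGLSP1} provides no barrier against $v\downarrow0$ (that is precisely the quenching mechanism), so one can only hope to keep $v$ bounded below for small $T$. Establishing that lower bound is exactly where the positivity-preserving property of $\mathcal{T}^*_t$, the lower estimate on $\mathcal{T}^*_\tau\mathbf{1}$, and the $O(T)$-smallness of the integral perturbation are needed; for a general $g$ with $g(x,z)\to-\infty$ as $z\to0$ this is all that is required. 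A secondary technical point is that $\mathcal{T}^*_t$ is only a contraction, not strongly continuous, on $L^\infty$; I therefore work directly in $L^\infty([0,T]\times D)$ rather than in $C([0,T];L^\infty(D))$, so that only the contraction and positivity properties of the semigroup, together with the continuity in $t$ of $N_t$, $A(t,s)$ and $K(t,s)$, are invoked.
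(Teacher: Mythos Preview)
Your argument is correct and reaches the result, but by a different route than the paper. The paper does not work directly on a two-sided box $\{m\le v\le M\}$; instead it regularises the singularity by $g_n(\cdot,v):=g(\cdot,\max(v,1/n))$, runs the Banach fixed point for each truncated problem on the one-sided set $\{v\ge 0,\ \|v\|_*\le R\}$ (so only an \emph{upper} bound has to be checked for invariance), and then glues the $v_n$ along the increasing stopping times $\tau_n=\inf\{t:\inf_x v_n\le 1/n\}$ to obtain the local solution on $[0,\sup_n\tau_n)$. Your direct approach is more economical and immediately yields a solution bounded away from zero, but it forces you to establish a uniform lower bound for the linear flow $e^{-A(t)}\mathcal{T}_t z_0$ in $L^\infty(D)$; the paper's truncation avoids that entirely and, as a bonus, automatically identifies the maximal interval of existence.

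The one soft spot in your write-up is precisely that lower bound: the claim $\mathcal{T}^*_{K(t)}\mathbf 1\ge 1-o(1)$ uniformly on $D$ is an $L^\infty$ statement about a semigroup that is not strongly continuous on $L^\infty$, and the appeal to ``the maximum principle and continuity in $t$'' does not quite close it. A clean fix, consistent with the paper, is comparison with the principal Robin eigenfunction: choose $c>0$ with $z_0\ge c\,\psi_1$ (possible since $\inf_D z_0>0$ and $\psi_1$ is bounded on $D$), and use $\mathcal{T}_t z_0\ge c\,e^{-\mu_1 K(t)}\psi_1\ge c\,e^{-\mu_1 K(t)}\inf_D\psi_1>0$, exactly the device employed later in \eqref{ik23}. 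With this substitution the invariance step goes through without invoking $L^\infty$-continuity of the semigroup.
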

\begin{proof} By Proposition \ref{SPDE-RPDE} it suffices to show the existence of a weak local in time solution for the related RPDE problem \eqref{RGLSP1}--\eqref{RGLSP3}.

The proof is broken up in three steps.

1. We introduce the mild form for  \eqref{RGLSP1}--\eqref{RGLSP3}
\begin{equation}
\begin{aligned}
\v(x,t)=e^{-A(t)} \mathcal{T}_t \v_0(x) 
- \int_{0}^{t} e^{N_s -A(t,s) }[ \mathcal{T}_{t,s} g\left(\cdot,e^{-N_s} \v(\cdot , s) \right)](x) ds,
\end{aligned}
\end{equation}
recalling that  $\mathcal{T}$ is the evolution family generated by the operators $-\frac{1}{2}k^2(t) \mathcal{L}$ while $A(t,s), A(t)$ are defined by \eqref{qp8} and \eqref{qp8b} respectively.

By Proposition \ref{ewms} we have  that the weak form and the mild form are equivalent. Hence, showing existence for the mild form guerantees existence for the weak form as well. 

2. To prove existence we need to handle the singular behaviour of $g$ at  $\v=0$. To this end we consider the approximating sequence $(g_{n})$ defined by $g_{n}(\cdot, \v)=g\left(\cdot, \max(\v, \frac{1}{n})\right)$. Assuming the $g$ is decreasing in a neighbourhood of $0$ we see that $-g_{n}$ are bounded below by $-g\left(\cdot,\frac{1}{n}\right)$ and moreover for each $n$ the functions $g_{n}$ are locally Lipschitz. To ease notation we will denote by $C_{n}$ the bound of $|g_{n}|$ and $L_{n}(R)$ the Lipschitz constant for $g_{n}$ in $(0,R)$.

We note that for any $n$, $g_{n}(\cdot,\v)=g(\cdot,\v)$ as long as $\v > \frac{1}{n}$. Moreover, if $n < m$ then $g_{n}(\cdot,\v) \le g_{m}(\cdot,\v)$ (equiv. $-g_{n}(\cdot,\v) \ge -g_{m}(\cdot,\v)$ ). 

Using the approximation we define the  random fields $\v_{n}$ as the mild solutions for the approximate problems
\begin{equation}\label{MILD-APPROX}
\begin{aligned}
\v_n(x,t)=e^{-A(t)} \mathcal{T}_t \v_0(x) 
- \int_{0}^{t} e^{N_s -A(t,s) }\left[ \mathcal{T}_{t,s} g_n\left(\cdot, e^{-N_s} \v_n(\cdot , s)\right)\right](x) ds,
\end{aligned}
\end{equation}
and define the stopping times  $\tau_{n}=\min(\bar{\tau}_{n}, T)$, where
\begin{eqnarray*}
\bar{\tau}_{n} := \inf\left\{ t \ge 0 \,\, : \,\, \inf_{x \in D} \v_{n}(x,t) \le \frac{1}{n}\right \}.
\end{eqnarray*}

We will show that problem \eqref{MILD-APPROX}  admits a unique solution in $L^{\infty}([0,T) \times D)$ for $T$ small enough, for every $n$, so that the stopping times $\tau_{n}$ are well defined. This is due to the local Lipschitz property of $g_{n}$ and is shown in step 3 below.

Note that $\tau_{n} \le \tau_{m}$ if $n < m$. Indeed, if $\v > \frac{1}{n}$ then $g_{n}(v)=g_{m}(v),$ hence by uniqueness of the solution of \eqref{MILD-APPROX} we have that $v_{n}=v_{m}$ as long as both are  above $\frac{1}{n}$, that is as long as $t \le \tau_{n}$. This means that up to time $\tau_{n}$ neither $\v_{n}$  nor $\v_{m}$ are below $\frac{1}{n}$, hence $\v_{m}$ will go below $\frac{1}{m} < \frac{1}{n}$ at a time $\tau_{m} \ge \tau_{n}$. Therefore the sequence $(\tau_{n})$ defined above is an increasing and bounded (by $T$) sequence of stopping times.

Since up to $\tau_{n}$, it holds that $g_{n}=g,$ then $\v_{n}=\v$ in $[0,\tau_{n})$ where $\v$ is the solution of 
\eqref{MILD-MILD}. We will then construct the solution $\v$ of \eqref{MILD-MILD} via
\begin{eqnarray*}
\v(x,t)=\v_{n}(x,t), \,\,\, \forall \,\, n \ge 1, \,\, (x,t) \in D \times[0,\tau_{n}).
\end{eqnarray*}
The solution can be continued up to $\tau =\sup_{n} \tau_{n} > 0$, and is the local solution of \eqref{MILD-APPROX}.

3. We now prove the existence of a local in time mild solution  for \eqref{MILD-APPROX} in $L^{\infty}([0,T_n) \times D)$ for $T_n$ small enough.
We will use a fixed point scheme for the operator $\F_n$ defined by
\begin{equation}
\begin{aligned}
(\F_n \v)(x,t) := e^{-A(t)} \mathcal{T}_t \v_0(x) -  \int_{0}^{t} e^{N_s -A(t,s) }\left[ \mathcal{T}_{t,s} g_n\left(\cdot, e^{-N_s} \v(\cdot , s)\right)\right](x) ds,
\end{aligned}
\end{equation}
on the Banach space $X :=\{ \v : [0,T_n]\times D \to L^{\infty}(D) \,\, \mid \,\, \| \v \|_{*} <\infty \}$ where the norm $\| \cdot \|_{*}$ is defined by $\| \v \|_{*} :=\sup_{t \in [0,T_n]} \| \v(\cdot, t)\|_{\infty}$, where $T$ is a suitable time (to be determined).

We will show that the operator $\F_n$  leaves invariant and is a contraction on the subset $X_R := \{ \v \in X \,\, \mid \,\, \v \ge 0, \,\, \| \v \|_{*} \le R\}$ for a suitable $R$ hence guaranteeing the existence of a unique non negative solution.

We first consider the invariance property for $X_R$. Consider any $\v \in X_R$ and take $\F \v$. We have that
\bgee
&&\| \F_n \v \|_{*} = \sup_{ t \in [0,T] } \left\| \mathcal{T}_t \v_0 - \int_{0}^{t} e^{ N_s-A(t,s) } \left[\mathcal{T}_{t,s} g_n\left(\cdot, e^{-N_s} \v(\cdot,s)\right)\right](x) ds \right\|_{\infty} \\
&&\le \| \v_0 \|_{\infty} + \sup_{t \in [0,T]} \int_{0}^{t} e^{N_s-A(t,s)} \left\| \left[\mathcal{T}_{t,s} g_n\left(\cdot, e^{-N_s} \v(\cdot,s)\right)\right](x)  \right\|_{\infty} ds \\
&&\le \| \v_0 \|_{\infty} + \sup_{t \in [0,T]} \int_{0}^{t} e^{ N_s} \left\| \left[\mathcal{T}_{t,s} g_n\left(\cdot, e^{-N_s} \v(\cdot,s)\right)\right](x)  \right\|_{\infty} ds  \\
&&\le \| \v_0 \|_{\infty} + \sup_{t \in [0,T]} \int_{0}^{t} e^{N_s} \left\|  g_n\left(\cdot, e^{-N_s} \v(\cdot,s)\right)(\cdot)  \right\|_{\infty} ds \\
&&\le \| \v_0\|_{\infty} + T_n A_{T_n} C_{n}\\
&&\le \| \v_0\|_{\infty} + T_n A_{T_n} C_*,
\egee
 where $A_{T_n}=\sup_{t \in [0,T_n]} e^{|N_t|}$ and $C_*:=\sup_{n\in \mathbb{N}}C_n.$ Note that since $\v_n(x,0)=\v_0(x)>0,$ for $n=1,2,\dots $ then by a continuity argument we have $C_*>0.$

We will choose $R$ and $T_n$ so that
\bgee
\| \v_0\|_{\infty} + T_n A_{T_n} C_{*} < R,
\egee
and thus $X_R$ is invariant under the operator $\F_n.$

We now consider the contraction property for $\F_n.$
We start by noting that
\bge
\| \F_{n} \v_1 -\F_{n} \v_2 \|_{*} &&\le \sup_{ t \in [0,T]} \int_{0}^{t}  e^{N_s-A(t,s) } \left\| \mathcal{T}_{t,s} g_n\left(\cdot, e^{-N_s} \v_1(\cdot , s) \right) - \mathcal{T}_{t,s} g_n\left(\cdot, e^{-N_s} \v_2(\cdot , s)\right) \right\|_{\infty} ds\no \\
&&\le \sup_{ t \in [0,T]} \int_{0}^{t}  e^{N_s  } \left\| \mathcal{T}_{t,s} g_n \left(\cdot, e^{-N_s} \v_1(\cdot , s)\right) - \mathcal{T}_{t,s} g_n\left(\cdot, e^{-N_s} \v_2(\cdot , s)\right) \right\|_{\infty} ds\no \\
&&\le \sup_{ t \in [0,T]} \int_{0}^{t}  e^{N_s  } \left\|  g_n\left(\cdot, e^{-N_s} \v_1(\cdot , s)\right) -  g_n\left(\cdot, e^{-N_s} \v_2(\cdot , s)\right) \right\|_{\infty} ds,\label{EST-I}
\ege
where we first used the fact that $e^{-A(t,s)} \le 1$ and then the contractivity of the evolution family $\mathcal{T}$. To proceed further we must make use of the local Lipschitz properties of the nonlinearity $g_n$. Let $g_n$ be locally Lipschitz, satisfying the property 
\begin{eqnarray*}
| g_n(\cdot, z_1) - g_n(\cdot, z_2) | \le L_{n}(R) | z_1- z_2|, \,\,\,\, \forall \,\, z_1,z_2 \in (0,R),
\end{eqnarray*}
for some $L_{n}(R)>0$. Then as long as $|e^{-N_s}| \, \| \v_{i}(\cdot, s) \|_{\infty} < R$  we may estimate 
\begin{eqnarray*}
&&\left\|  g_n\left(\cdot, e^{-N_s} \v_1(\cdot , s)\right) -  g_n\left(\cdot,e^{-N_s} \v_2(\cdot , s)\right) \right\|_{\infty} =\sup_{x \in D} | g_n\left(\cdot, e^{-N_s} \v_1(x , s)\right) -  g_n\left(\cdot, e^{-N_s} \v_2(x, s)\right) | \\
&& \le L_{n}(R) e^{N_s} \sup_{x \in D} |\v_1(x,s) - \v_2(x,s) | =L_{n}(R) e^{-N_s} \| \v_1(\cdot, s)-\v_2(\cdot,s)\|
_{\infty}.
\end{eqnarray*}
Using the above estimate we may further estimate
\eqref{EST-I} by
\begin{eqnarray*}
\| \F_n \v_1 -\F_n \v_2 \|_{*} &&\le \sup_{ t \in [0,T]} \int_{0}^{t}  e^{N_s  } \left\|  g_n\left(\cdot, e^{-N_s} \v_1(\cdot , s) \right) -  g_n\left(\cdot, e^{-N_s} \v_2(\cdot , s)\right) \right\|_{\infty} ds \\
&&\le \sup_{ t \in [0,T]} \int_{0}^{t}  e^{N_s  } L_{n}(R)  e^{-N_s} \| v_1(\cdot, s)-v_2(\cdot,s)\|_{\infty} ds
\\
&&\le T_n L_{n}(R) \sup_{ t \in [0,T] }  \| v_1(\cdot, s)-v_2(\cdot,s)\|_{\infty} =T_n L_{n}(R) \| \v_1 - \v_2\|_{*}.
\end{eqnarray*}

Hence we conclude that
\begin{eqnarray*}
\| \F_n \v_1 -\F_n \v_2 \|_{*} \le T_n L_{n}(R) \| \v_1 - \v_2\|_{*}, \,\,\, \mbox{as long as} \,\,\,  sup_{s \in [0,T]} e^{N_s} \| \v_{i}(\cdot, s)\|_{\infty}  \le R.
\end{eqnarray*}
Choosing $T$ so that 
\begin{eqnarray}
T_n L_{n}(R)< 1,
\end{eqnarray}
it can be seen that $\F_n$ is a contraction.
\end{proof}

\section{Upper bounds for quenching time and  quenching probability}\label{upq}
The current section is focused on the derivation of upper bounds for the quenching time as well as the quenching probability for problem \eqref{model1:1}--\eqref{model1:3}. Along this section we consider the following growth condition for the nonlinearity $g,$
\bge\label{gc1}
g(x,u)\geq \eta_1 \left( \lambda \zeta(x) u^{-2}-\gamma u \right)
\ege
for some positive constant $\eta_1.$
\subsection{An upper bound for the quenching time}

Define:
\bge\label{qp8a}
\zeta_m:=\min_{x\in \bar{D}} \zeta(x)>0,\;\zeta_M:=\max_{x\in \bar{D}} \zeta(x)>0.
\ege
Consider now the following eigenvalue problem 
   \begin{eqnarray}
      && \mathcal{L} \psi= \mu \psi,\quad x\in D, \label{eig1} \\ &&\mathcal{N}\psi(x)+\beta(x) \psi=0, \quad x\in \partial D^c, \label{eig2} 
   \end{eqnarray}
then it is known, see 
Appendix, that  for the first eigenpair $(\mu_1, \psi_1)$ we have $\mu_1>0$ whilst the corresponding eigenfunction $\psi$ could be taken also positive.
 \begin{theorem}\label{thm0}
Consider the random (stopping) time 
\bge\label{qp9}
\tau^*:=\inf\left\{t>0: \int_0^t  e^{-3(\eta_1\gamma s -\mu_1 K(s)- A(s))+3N_s}\,ds\geq \frac{\langle v_0 , \psi_1 \rangle_D^3}{3 \lambda\eta_1 \zeta_m}\right\},
\ege
where the functions $K(s), A(s)$   are defined by \eqref{qp8} while the constant $\zeta_m$ is given by \eqref{qp8a}.  Then, on the event $\{\tau^*<\infty\}$ the solution $v$ of problem \eqref{RGLSP1}--\eqref{RGLSP3} and thus the solution $z$ of  \eqref{model1:1}--\eqref{model1:3} quench in finite $\tau,$ where $\tau \leq \tau^*\, \mathbb{P}-a.s.$
\end{theorem}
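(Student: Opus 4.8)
The plan is to project the random problem \eqref{RGLSP1}--\eqref{RGLSP3} onto the first Robin eigenfunction and reduce the quenching question to a one-dimensional comparison argument. Let $\psi_1>0$ be the principal eigenfunction of \eqref{eig1}--\eqref{eig2}, normalised so that $\int_D\psi_1\,dx=1$ (this is the normalisation that produces exactly the constant in \eqref{qp9}), and set $w(t):=\langle v(\cdot,t),\psi_1\rangle_{D}$. Since $\mathcal{L}=\mathcal{L}^*$ and $\mathcal{L}^*\psi_1=\mu_1\psi_1$, the function $\psi_1$ lies in $D(\mathcal{L}^*)$ and is therefore an admissible test function in the weak formulation \eqref{qp7}. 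Taking $\phi=\psi_1$ there, and using $K'(s)=\tfrac12 k^2(s)$, $A'(s)=\tfrac12 a^2(s)$ (from \eqref{qp8}), gives
\[
w(t)=\langle v_0,\psi_1\rangle_{D}+\int_0^t\Bigl(-\mu_1 K'(s)\,w(s)-A'(s)\,w(s)-e^{N_s}\langle g\bigl(\cdot,e^{-N_s}v(\cdot,s)\bigr),\psi_1\rangle_{D}\Bigr)\,ds .
\]
Before quenching, $v\ge 0$ and $v$ is bounded and bounded away from $0$, so the integrand is locally integrable in time, $w$ is $C^1$, and $w(t)>0$ (as $v\not\equiv 0$ and $\psi_1>0$).

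Next I would insert the growth condition \eqref{gc1}. Writing $u=e^{-N_s}v(x,s)$, it yields $e^{N_s}g\bigl(x,e^{-N_s}v\bigr)\ge\eta_1\bigl(\lambda\zeta(x)e^{3N_s}v^{-2}-\gamma v\bigr)$; pairing with $\psi_1$, using $\zeta\ge\zeta_m$, $\int_D v\psi_1\,dx=w$, and Jensen's inequality for the convex map $r\mapsto r^{-2}$ against the probability measure $\psi_1\,dx$ (so that $\int_D v^{-2}\psi_1\,dx\ge w^{-2}$), I arrive at the differential inequality
\[
w'(t)\le\bigl(\eta_1\gamma-\mu_1 K'(t)-A'(t)\bigr)w(t)-\eta_1\lambda\zeta_m\,e^{3N_t}\,w(t)^{-2}.
\]
The substitution $y(t):=w(t)^3$ linearises this into $y'(t)\le 3\bigl(\eta_1\gamma-\mu_1 K'(t)-A'(t)\bigr)y(t)-3\eta_1\lambda\zeta_m e^{3N_t}$; multiplying by the strictly positive integrating factor $\exp\bigl(-3\eta_1\gamma t+3\mu_1 K(t)+3A(t)\bigr)$, integrating on $[0,t]$ and recalling $K(0)=A(0)=0$, I obtain
\[
e^{-3\eta_1\gamma t+3\mu_1 K(t)+3A(t)}\,w(t)^3\;\le\;\langle v_0,\psi_1\rangle_{D}^3-3\lambda\eta_1\zeta_m\int_0^t e^{-3(\eta_1\gamma s-\mu_1 K(s)-A(s))+3N_s}\,ds .
\]

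The conclusion then follows on $\{\tau^*<\infty\}$ by contradiction: if the solution existed and remained non-quenched up to and including $\tau^*$, then $w(\tau^*)>0$, whereas by the very definition \eqref{qp9} of $\tau^*$ the right-hand side above vanishes at $t=\tau^*$, forcing $w(\tau^*)^3\le 0$ — a contradiction. Hence the quenching time $\tau$ of $v$ satisfies $\tau\le\tau^*$ $\mathbb{P}$-a.s.\ on $\{\tau^*<\infty\}$; and since $z=e^{-N_t}v$ with $N_t$ independent of $x$ and a.s.\ finite for finite $t$, one has $\inf_{x\in D}z(x,t)=e^{-N_t}\inf_{x\in D}v(x,t)$, so $z$ quenches at the same instant $\tau\le\tau^*$.

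The step I expect to demand the most care is the rigorous reduction from the infinite-dimensional weak formulation to the scalar ODE, because of the singularity of $g$ at $0$: one should run the computation on the approximate problems \eqref{MILD-APPROX}, where $v_n$ stays above $1/n$ on $[0,\tau_n)$ (and the fixed-point construction keeps it bounded), so that $g_n(\cdot,e^{-N_s}v_n)=g(\cdot,e^{-N_s}v_n)$ is bounded and $t\mapsto\langle v_n(\cdot,t),\psi_1\rangle_{D}$ is genuinely differentiable, derive the inequality for $w_n$, and then pass to $v=v_n$ on $[0,\tau_n)$ with $\tau=\sup_n\tau_n$. A secondary, purely bookkeeping point is fixing the normalisation $\int_D\psi_1\,dx=1$, which is exactly what makes Jensen's inequality give $\int_D v^{-2}\psi_1\,dx\ge w^{-2}$ with no extra constant and hence the threshold in \eqref{qp9} equal to $\langle v_0,\psi_1\rangle_{D}^3/(3\lambda\eta_1\zeta_m)$.
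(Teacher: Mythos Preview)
Your proof is correct and follows essentially the same route as the paper: project onto the normalised principal eigenfunction $\psi_1$, use \eqref{gc1} together with $\zeta\ge\zeta_m$ and Jensen's inequality (for the probability measure $\psi_1\,dx$) to obtain the scalar differential inequality for $w(t)=\langle v(\cdot,t),\psi_1\rangle_D$, and then integrate. The only cosmetic difference is that the paper phrases the last step as a comparison with the explicit solution $I(t)$ of the Bernoulli ODE, whereas you carry out the same Bernoulli substitution $y=w^3$ and integrating factor by hand; the resulting bound and the conclusion at $\tau^*$ are identical.
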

\begin{proof}
By choosing as a test function into the weak formulation  \eqref{qp7} the principal eigenfunction $\psi_1$ to \eqref{eig1}-\eqref{eig2} normalized as 
\bge\label{qp10}
\int_D \psi_1(x)\,dx=1,
\ege
then
\bge\label{qp11}
\langle v(\cdot,t) , \psi_1 \rangle_D&&=\langle z_0 , \psi_1 \rangle_D-\int_0^t \left(\frac{\mu_1 k^2(s)}{2}+\frac{a^2(s)}{2}\right) \langle v  , \psi_1 \rangle_D\, ds\nonumber\\
&&-\int_0^t   e^{N_s}\left\langle  g\left(\cdot,e^{-N_s} \v(\cdot, s)\right) , \psi_1 \right\rangle_{D}\,ds.
\ege

Next by \eqref{qp11} and for any $\ve>0$
\bgee
\frac{\langle v(\cdot,t+\ve) , \psi_1 \rangle_D-\langle v(\cdot,t) , \psi_1 \rangle_D}{\ve}=&&-\frac{1}{\ve}\int_t^{t+\ve} \left(\frac{\mu_1 k^2(s)}{2}+\frac{a^2(s)}{2}\right) \langle v  , \psi_1 \rangle_D\, ds\\&&-\frac{1}{\ve}\int_t^{t+\ve}   e^{N_s}\left\langle  g\left(\cdot,e^{-N_s} \v(\cdot, s)\right) , \psi_1 \right\rangle_{D}\,ds.
\egee
Letting now $\ve\to 0$
\bgee
\frac{d}{dt}\langle v(\cdot,t) , \psi_1 \rangle_D=-\left(\frac{\mu_1 k^2(t)}{2}+\frac{a^2(t)}{2}\right) \langle v(\cdot,t)  , \psi_1 \rangle_D-  e^{N_t}\left\langle  g\left(\cdot,e^{-N_s} \v(\cdot, s)\right) , \psi_1 \right\rangle_{D} . 
\egee
By virtue of \eqref{gc1} and Jensen's inequality, taking also into account \eqref{qp10}, we finally derive
\bgee
\frac{d}{dt}\langle v(\cdot,t) , \psi_1 \rangle_D \leq \left(\eta_1\gamma-\frac{\mu_1 k^2(t)}{2}-\frac{a^2(t)}{2}\right) \langle v(\cdot,t)  , \psi_1 \rangle_D- \eta_1\la \zeta_m e^{3N_t}\left\langle v(\cdot,t)  , \psi_1 \right\rangle^{-2}_D. 
\egee
Thus via comparison we deduce that $\langle v(\cdot,t) , \psi_1 \rangle_D\leq I(t)$ where $I(t)$ solves the following random initial value problem
\bgee
&& \frac{dI}{dt}=\left(\eta_1\gamma-\frac{\mu_1 k^2(t)}{2}-\frac{a^2(t)}{2}\right) I(t)- \la\eta_1 \zeta_m e^{3N_t} I^{-2}(t), \; 0<t<\tau^*, \\
&& I(0)=\langle v_0 , \psi_1 \rangle_D.
\egee
Hence
\bgee
I(t)= e^{\eta_1\gamma t-\mu_1 K(t)-A(t)}\left[\langle v_0 , \psi_1 \rangle_D^3-3\la\eta_1 \zeta_m\int_0^t e^{-3\left(\eta_1\gamma s-\mu_1 K(s)-A(s)\right)+3N_s}\,ds\right]^{1/3},
\egee
for $0<t<\tau^*,$ where $\tau^*$ is defined by \eqref{qp9}.

Therefore,  $I(t)$ hits  $0$ (quenches) in finite time on the event $\left\lbrace \tau^* < + \infty \right\rbrace.$ Since $I(t)\geq \langle v(\cdot,t) , \psi_1 \rangle_D,$ then $\tau^*$ is an upper bound $\mathbb{P}-a.s.$ for the quenching (stopping) time of $\langle v(\cdot,t) , \psi_1 \rangle_D$ and thus for  $v(x, t).$ The latter thanks to transformation $\v= e^{N_t} \u$ implies that $\tau^*$ is also an upper bound $\mathbb{P}-a.s.$ for the solution $z$ of  \eqref{model1:1}- \eqref{model1:3}.
\end{proof}
\subsection{A tail estimate leading to an upper bound of  quenching time}
In the current subesction, our main purpose is to provide a tail estimate for the upper bound $\tau^*$ defined by \eqref{qp9}, making use of a tail probability estimate for exponential estimates of fBM given in \cite[Theorem 3.1]{D18}. To this end we consider that the stochastic process is defined as follows
\bge\label{te1}
B_t^H:=\int_0^t G^H(t,s) dB_s,
\ege
where the Volterra kernel $G^H$ is given by \eqref{te2}. 

Henceforth, for simplicity, we drop the superscript $H$ from the notation of the kernel $G^H(t,s).$
\begin{thm}\label{thm1a}
Assume that $B^H$ is given by \eqref{te1} and let $$\nu(T):=\int_0^T\exp\left[-3(\gamma \eta_1 t-\mu_1 K(t)-A(t))\right]\mathbb{E}(3N_t)\,dt.$$ Then, for any $T>0$ such that 
$\frac{\langle v_0 , \psi_1 \rangle_D^3}{3 \lambda \eta_1\zeta_m}>\nu(T),$
\bgee
\mathbb{P}\left[\tau^*\leq T\right]\leq 2 \exp\left(-\frac{\ln^2 \left[\frac{\langle v_0 , \psi_1 \rangle_D^{-3}} {3 \lambda \eta_1\zeta_m\nu(T)}\right]}{2 M(T)}\right),
\egee
where 
\bge\label{te2b}
M(T):=18\int_0^T a^2(s)\,ds+36\, HT^{2H-1}\int_0^T b^2(s)\,ds.
\ege
\end{thm}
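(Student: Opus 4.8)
The plan is to recast $\{\tau^*\le T\}$ as a deviation event for an exponential functional of the Gaussian noise $N$, to control the associated variance by $M(T)$, and then to quote the tail bound \cite[Theorem 3.1]{D18}. First I would set
\[
X_t:=3N_t-3\bigl(\gamma\eta_1 t-\mu_1 K(t)-A(t)\bigr),\qquad t\in[0,T],
\]
with $K,A$ as in \eqref{qp8}, so that the integrand in the definition \eqref{qp9} of $\tau^*$ equals $e^{X_s}$. Since $s\mapsto e^{X_s}$ is a.s.\ continuous and strictly positive, $t\mapsto\int_0^t e^{X_s}\,ds$ is a.s.\ continuous and strictly increasing, and hence
\[
\{\tau^*\le T\}=\Bigl\{\int_0^T e^{X_s}\,ds\ \ge\ \tfrac{\langle v_0 , \psi_1 \rangle_D^3}{3\lambda\eta_1\zeta_m}\Bigr\}.
\]
Because $a,b$ are deterministic, $X_t$ is Gaussian with $\mathbb{E}\bigl[e^{X_t}\bigr]=e^{-3(\gamma\eta_1 t-\mu_1 K(t)-A(t))}\,\mathbb{E}\bigl[e^{3N_t}\bigr]$, so that $\mathbb{E}\bigl[\int_0^T e^{X_s}\,ds\bigr]=\nu(T)$, and the standing hypothesis $\tfrac{\langle v_0 , \psi_1 \rangle_D^3}{3\lambda\eta_1\zeta_m}>\nu(T)$ says exactly that the threshold exceeds this mean, which is what makes the logarithm in the conclusion positive.

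The next and main step is to bound $\sigma_T^2:=\sup_{s\in[0,T]}\mathrm{Var}(X_s)=9\sup_{s\in[0,T]}\mathrm{Var}(N_s)$ in terms of \eqref{te2b}. Writing $N_s=\int_0^s a(r)\,dB_r+\int_0^s b(r)\,dB^H_r$ and using $(x+y)^2\le 2x^2+2y^2$ to split off the cross term between the Brownian and fractional parts, the It\^o isometry handles the Brownian contribution, while for the fractional one I would invoke the second--moment identity
\[
\mathbb{E}\Bigl[\Bigl(\int_0^s b\,dB^H\Bigr)^2\Bigr]=H(2H-1)\!\int_0^s\!\!\int_0^s b(u)b(v)\,|u-v|^{2H-2}\,du\,dv\ \le\ 2H\,s^{2H-1}\!\int_0^s b^2(r)\,dr,
\]
where the inequality follows from $b(u)b(v)\le\frac12\bigl(b(u)^2+b(v)^2\bigr)$ and $\int_0^s|u-v|^{2H-2}\,dv\le\frac{2s^{2H-1}}{2H-1}$. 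Since $\int_0^s a^2$, $\int_0^s b^2$ and $s^{2H-1}$ are non-decreasing in $s$, this gives $\mathrm{Var}(N_s)\le 2\int_0^T a^2+4HT^{2H-1}\int_0^T b^2$ for every $s\in[0,T]$, hence $\sigma_T^2\le 18\int_0^T a^2(r)\,dr+36HT^{2H-1}\int_0^T b^2(r)\,dr=M(T)$.

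Finally, $\int_0^T e^{X_s}\,ds$ is an exponential functional of the Gaussian process $X$, which by \eqref{te1}--\eqref{te2} is a functional of the driving Brownian motion $B$; applying \cite[Theorem 3.1]{D18}, which for a level above the mean $\nu(T)$ produces a bound of the shape $2\exp\bigl(-\tfrac12(\ln(\,\cdot/\nu(T)))^2/\sigma_T^2\bigr)$, and substituting $\sigma_T^2\le M(T)$, yields
\[
\mathbb{P}\bigl[\tau^*\le T\bigr]\ \le\ 2\exp\!\left(-\frac{\ln^2\!\bigl[\tfrac{\langle v_0 , \psi_1 \rangle_D^{3}}{3\lambda\eta_1\zeta_m\,\nu(T)}\bigr]}{2M(T)}\right),
\]
which is the assertion. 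The same shape can be obtained directly by a Chernoff argument: estimate the integer moments $\mathbb{E}\bigl[(\int_0^T e^{X_s}\,ds)^n\bigr]$ using that $\sum_{i=1}^n X_{s_i}$ is Gaussian with variance at most $n^2\sigma_T^2$, apply Markov's inequality and optimize in $n$; quoting \cite{D18} has the advantage of avoiding the loss incurred when rounding the optimal $n$ to an integer.

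\textbf{Main obstacle.} The one genuinely delicate point is the variance estimate of the fractional part in the middle step --- turning the singular kernel $H(2H-1)|u-v|^{2H-2}$ into the clean bound $2H\,s^{2H-1}\int_0^s b^2$, since this is precisely what fixes the coefficient $36\,HT^{2H-1}$ in $M(T)$. After that, checking that the mixed, time--inhomogeneous functional $\int_0^T e^{X_s}\,ds$ fits the hypotheses of \cite[Theorem 3.1]{D18} (in particular that the dependence between $B$ and $B^H$ built into \eqref{te1} is harmless, the cross term being absorbed as above) is essentially bookkeeping, and the reformulation in the first step is routine.
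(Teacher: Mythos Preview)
Your proposal is correct and follows the same route as the paper: rewrite $\{\tau^*\le T\}$ as a tail event for $\int_0^T e^{X_s}\,ds$, bound the relevant quadratic quantity by $M(T)$, and invoke \cite[Theorem~3.1]{D18}. The one cosmetic difference is that the paper phrases the key bound as a Malliavin-derivative estimate $\sup_{t\in[0,T]}\int_0^T|D_rX_t|^2\,dr\le M(T)$ (which is how the hypothesis of \cite[Theorem~3.1]{D18} is actually stated; see \eqref{te5}), computed via the Volterra representation \eqref{te1} that expresses $X_t$ as a single Wiener integral over $B$; you instead bound $\sup_{t}\mathrm{Var}(X_t)$ via the fBm covariance kernel $H(2H-1)|u-v|^{2H-2}$. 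The two quantities coincide here because $X_t$ is a Wiener integral with deterministic integrand, so $\int_0^T|D_rX_t|^2\,dr=\mathrm{Var}(X_t)$, and your fractional-variance inequality is exactly the computation in \eqref{te7}--\eqref{te5a} read from the covariance side rather than the Volterra side. To make the citation to \cite{D18} airtight you should either record this identification explicitly, or carry out the direct Chernoff argument you sketch at the end, which bypasses the need to match hypotheses altogether.
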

\begin{proof}
Consider the stochastic process
\bgee
X_t:=-3\left(\gamma \eta_1 t-\mu_1 K(t)-A(t)\right)+3N_t,
\egee
then thanks to \eqref{te1} we have the representation
\bge\label{te3}
X_t=-3\left(\gamma \eta_1 t-\mu_1 K(t)-A(t)\right)+3\left(\int_0^t a(s)\,dB_s+\int_0^t \int_s^t b(r) \frac{\pl}{\pl r}G(r,s)\,dr dB_s\right).
\ege
Now  from \cite[Theorem 3.1]{D18} it folows that for any $T\geq 0$ and $w>\nu(T),$ it holds
\bge\label{te4}
\mathbb{P}\left(\int_0^T e^{X_t}dt\geq w\right)\leq 2 \exp \left[-\frac{(\ln w-\ln \nu(T))^2}{2 M(T)}\right],
\ege
where $\nu(T)=\int_0^T \E\left[e^{X_t}\right]\,dt$ and $M(T)$ is such that
\bge\label{te5}
\sup_{t\in[0,T]} \int_0^T \vert D_rX_t \vert^2 dr \leq M(T)\quad \mathbb{P}-a.s.\, ,
\ege
and $D_rX_t$ stands for the Malliavin derivative of $X_t.$ 

So in order to obtain the desired tail estimate we are working towards finding an upper bound of $M(T)$ such that \eqref{te5} holds.

We first note, using the representation \eqref{te3}, that for $s<t$
\bgee
D_rX_t=3\left(a(r)+\int_r^tb(s) \frac{\pl}{\pl s}G(s,r)\,ds\right),
\egee
and thus
\bge\label{te6}
\int_0^t \vert D_rX_t \vert^2 dr\leq 18\int_0^t a^2(r)dr+18\int_0^t\left(\int_r^t b(s) \frac{\pl}{\pl s}G(s,r)\,ds\right)^2dr.
\ege
Next we are working towards the derivation of a bound of the second integral of the right-hand side of \eqref{te6}. Indeed, via Fubini's theorem and following the same calculations like in \cite[page 15]{DKLMT23} we have
\bge\label{te7}
\int_0^t\left(\int_r^t b(s) \frac{\pl}{\pl s}G(s,r)\,ds\right)^2dr
=2\int_0^t \left(\int_0^s b(s) b(s')\Psi(s,s')\,ds'\right)\,ds,
\ege
where 
\bgee
\Psi(s,s'):=\int_0^{\min\{s,s'\}}  \frac{\pl}{\pl s}G(s,r) \frac{\pl}{\pl s'}G(s',r)\,dr.
\egee
Since
\bgee
\frac{\pl}{\pl s}G(s,r) =C_H r^{1/2-H} (s-r)^{H-3/2} s^{H-1/2}
\egee
then using \cite[relation (5.7) ]{N06} for $s'<s$  we deduce
\bgee
\Psi(s,s')&&=C_H^2\, (ss')^{H-1/2} \int_0^{\min\{s,s'\}} r^{1-2H} (s-r)^{H-3/2} (s'-r)^{H-3/2}dr\\&&= H(2H-1)(s-s')^{2H-2}.
\egee
Next by virtue of \eqref{te7} and following the same calculations as in \cite[page 16]{DKLMT23} yields
\bge\label{te5a}
\int_0^t\left(\int_r^t b(s) \frac{\pl}{\pl s}G(s,r)\,ds\right)^2dr&&\leq 2H(2H-1)\int_0^t\int_0^s \vert b(s) b(s')\vert (s-s')^{2H-2} \, ds' ds\nn\\
&&\leq 2Ht^{2H-1}\int_0^t b^2(s)ds.
\ege
Then \eqref{te6} infers
\bge\label{te8}
\sup_{t\in[0,T]}\int_0^T \vert D_rX_t \vert^2 dr\leq 18\int_0^T a^2(r)dr+36HT^{2H-1}\int_0^T b^2(s)ds:=M(T).
\ege
Since by \eqref{qp9}
\bge\label{te8a}
\mathbb{P}\left[\tau^*\leq T\right]&&=\mathbb{P}\left[\int_0^T  e^{-3(\gamma \eta_1 t -\mu_1 K(t)- A(t))+3N_t}\,dt\geq \frac{\langle v_0 , \psi_1 \rangle_D^3}{3 \lambda \eta_1 \zeta_m}\right]\no\\&&=\mathbb{P}\left[\int_0^T \exp\left[X(t)\right]\,dt\geq w\right ],
\ege
then the desired result follows by virtue of \eqref{te4} and \eqref{te8}  for $w=\frac{\langle v_0 , \psi_1 \rangle_D^3}{3 \lambda \eta_1 \zeta_m}.$ 
\end{proof}
\begin{thm}\label{cvn1}
\begin{enumerate}
    \item Assume that 
    \bge\label{te9}
B_t^H= \int_0^t G(t,s)dW_s,
    \ege
where $G$ is the Volterra kernel given by \eqref{te2} and $W$ is a Brownian motion defined in the same probability space, and adapted to the same filtration as the Brownian motion $B.$ Then
\bge\label{te1a}
\mathbb{P}\left[\tau^*\leq T\right]\leq  \frac{3 \lambda \eta_1 \zeta_m \left(\int_0^T  e^{6(\mu_1 K(t)+A(t)-\gamma \eta_1 t )}\,dt+\int_0^T  e^{6 A(t)+36 Ht^{2H-1}\int_0^t b^2(s)\,ds} \,dt\right)}{\langle v_0 , \psi_1\rangle_D^3}.
\ege
\item Assume that $B_t$ and $B_t^H$ are independent then
\bge\label{te2a}
\mathbb{P}\left[\tau^*\leq T\right]\leq  3 \lambda \eta_1 \zeta_m\frac{ \int_0^T  e^{3(\mu_1 K(t)-\gamma \eta_1 t +4A(t)+3Ht^{2H-1}\int_0^tb^2(s)\,ds)}\,dt}{\langle v_0 , \psi_1 \rangle_D^3}.
\ege
\end{enumerate}
\end{thm}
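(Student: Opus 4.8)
The plan is to reduce both statements to a single exponential–moment estimate for the mixed process $N_t$. We start from the reformulation already recorded in \eqref{te8a}, namely $\{\tau^*\le T\}=\{\int_0^T e^{X(t)}\,dt\ge w\}$ with $X(t):=-3(\gamma\eta_1 t-\mu_1 K(t)-A(t))+3N_t$ and $w:=\langle v_0,\psi_1\rangle_D^3/(3\lambda\eta_1\zeta_m)$. Since $e^{X(t)}\ge 0$, Markov's inequality together with Tonelli's theorem gives
\[
\mathbb{P}[\tau^*\le T]\ \le\ \frac{3\lambda\eta_1\zeta_m}{\langle v_0,\psi_1\rangle_D^3}\int_0^T e^{-3(\gamma\eta_1 t-\mu_1 K(t)-A(t))}\,\mathbb{E}\big[e^{3N_t}\big]\,dt .
\]
Hence everything comes down to bounding the deterministic exponential times $\mathbb{E}[e^{3N_t}]$ in each of the two regimes, and then integrating in $t$.

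For statement~(2), independence of $B$ and $B^H$ makes $\int_0^t a(s)\,dB_s$ and $\int_0^t b(s)\,dB^H_s$ independent centred Gaussians, so $\mathbb{E}[e^{3N_t}]=\mathbb{E}[e^{3\int_0^t a(s)\,dB_s}]\cdot\mathbb{E}[e^{3\int_0^t b(s)\,dB^H_s}]$. The first factor equals $e^{\frac92\int_0^t a^2\,ds}=e^{9A(t)}$; for the second, $\int_0^t b(s)\,dB^H_s$ is Gaussian with variance at most $2Ht^{2H-1}\int_0^t b^2\,ds$ — this is exactly the estimate \eqref{te5a} with $a\equiv 0$ — so the second factor is $\le e^{9Ht^{2H-1}\int_0^t b^2\,ds}$. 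Multiplying by $e^{-3(\gamma\eta_1 t-\mu_1 K(t)-A(t))}$ and collecting exponents ($3A(t)+9A(t)=12A(t)$, etc.) produces exactly the integrand $e^{3(\mu_1 K(t)-\gamma\eta_1 t+4A(t)+3Ht^{2H-1}\int_0^t b^2\,ds)}$ appearing in \eqref{te2a}; integrating over $[0,T]$ and restoring the prefactor closes this case.

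For statement~(1), $B^H_t=\int_0^t G(t,s)\,dW_s$ with $W$ only assumed adapted to the filtration of $B$, so $B$ and $B^H$ are in general dependent and $\mathbb{E}[e^{3N_t}]$ does not factorise. As in the proof of Theorem~\ref{thm1a} (cf. \eqref{te3}), a Fubini argument rewrites $\int_0^t b(s)\,dB^H_s=\int_0^t\big(\int_s^t b(r)\,\partial_r G(r,s)\,dr\big)\,dW_s$, a Wiener integral whose variance is bounded by $2Ht^{2H-1}\int_0^t b^2\,ds$ via \eqref{te5a}. To decouple the Brownian and fractional contributions I would apply the elementary inequality $e^{u+v}\le\tfrac12(e^{2u}+e^{2v})$ to the splitting $X(t)=u+v$ with $u:=-3(\gamma\eta_1 t-\mu_1 K(t)-A(t))+3\int_0^t a(s)\,dB_s-cA(t)$ and $v:=3\int_0^t b(s)\,dB^H_s+cA(t)$, the deterministic constant $c$ being chosen so that the Gaussian moment of $2u$ becomes purely deterministic: since $\mathbb{E}[e^{6\int_0^t a(s)\,dB_s}]=e^{18\int_0^t a^2\,ds}=e^{36A(t)}$, taking $c=18$ gives $\mathbb{E}[e^{2u}]=e^{-6(\gamma\eta_1 t-\mu_1 K(t)-A(t))}=e^{6(\mu_1 K(t)+A(t)-\gamma\eta_1 t)}$, the first integrand of \eqref{te1a}, while $\mathbb{E}[e^{2v}]=e^{2cA(t)}\mathbb{E}[e^{6\int_0^t b(s)\,dB^H_s}]\le e^{36A(t)+36Ht^{2H-1}\int_0^t b^2\,ds}$ by the Gaussian moment formula and \eqref{te5a}. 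Taking expectations in $\tfrac12(e^{2u}+e^{2v})$, interchanging with the $t$–integral, dropping the harmless factor $\tfrac12$ and multiplying by the prefactor then assembles the two pieces into a bound of the form \eqref{te1a}.

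The only genuine obstacle is statement~(1): the dependence between $B$ and $B^H$ forbids the factorisation used in statement~(2), so one must instead decouple through $e^{u+v}\le\tfrac12(e^{2u}+e^{2v})$ with the precisely tuned deterministic shift $cA(t)$, which absorbs the Brownian exponential moment into the deterministic factor, leaving the fractional part to be controlled solely by the Volterra–kernel variance estimate \eqref{te5a} inherited from Theorem~\ref{thm1a}. Everything else — the two Gaussian exponential–moment computations and the final $t$–integrations — is routine.
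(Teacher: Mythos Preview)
Your argument for part~(2) is exactly the paper's: Markov, then factorise by independence, then the two Gaussian exponential moments. Nothing to add there.

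For part~(1) the paper takes a slightly different route: it first applies Cauchy--Schwarz to the time integral,
\[
\int_0^T e^{X(t)}\,dt\ \le\ \Bigl(\int_0^T e^{6(\mu_1 K-\gamma\eta_1 t+\int_0^t a\,dB_s)}\,dt\Bigr)^{1/2}\Bigl(\int_0^T e^{6(A+\int_0^t b\,dB^H_s)}\,dt\Bigr)^{1/2},
\]
then uses $\sqrt{AB}\le A+B$ at the level of events, and finally applies Markov to each summand. Your Markov-first, then pointwise $e^{u+v}\le\tfrac12(e^{2u}+e^{2v})$, is the same decouple-by-doubling idea executed in a different order, and is equally valid as a method.

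The shift $c=18$, however, does not deliver the bound displayed in \eqref{te1a}: as you yourself compute, it forces $\mathbb{E}[e^{2v}]\le e^{36A(t)+36Ht^{2H-1}\int_0^t b^2}$, with $36A(t)$ rather than the printed $6A(t)$. The paper's grouping is simpler---put the deterministic $3A(t)$ with the fractional piece and $3(\mu_1 K-\gamma\eta_1 t)$ with the Brownian piece, no shift---and that reproduces the second integrand exactly. But then the \emph{first} integrand must carry $\mathbb{E}[e^{6\int_0^t a\,dB_s}]=e^{36A(t)}$, not $e^{6A(t)}$; the paper's own computation writes $e^{6(\mu_1K+A-\gamma\eta_1 t)}$ at this step, which is a slip (they quote $\mathbb{E}[e^{\delta\int a\,dB}]=e^{\delta^2 A}$, and $\delta=6$ gives $36$, not $6$). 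No choice of shift can make both $A(t)$-coefficients equal to $6$, since one of the two terms has to absorb the $e^{36A(t)}$ coming from the Brownian moment. Your approach is sound; just drop the artificial shift, group as the paper does, and accept that the first integrand in \eqref{te1a} should read $e^{6(\mu_1 K(t)-\gamma\eta_1 t)+36A(t)}$.
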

\begin{proof}
\begin{enumerate}
    \item   Making use first of H\"older's and then Chebichev's inequalities  then \eqref{qp9} yields
\bge\label{te10}
\mathbb{P}\left[\tau^*\leq T\right]&&=\mathbb{P}\left[\int_0^T  e^{3(\mu_1 K(t)-\gamma \eta_1 t +\int_0^t a(s)\, dB_s)+3( A(t) +\int_0^t b(s)\, dB^H_s)}\,dt\geq \frac{\langle v_0 , \psi_1 \rangle_D^3}{3 \lambda \eta_1 \zeta_m}\right]\nn\\
&&\leq \mathbb{P}\left[\left(\int_0^T  e^{6(\mu_1 K(t)-\gamma \eta_1 t +\int_0^t a(s)\, dB_s)}\,dt\right)^{1/2}\right. \nn\\ &&\left. \times \left(\int_0^T  e^{6( A(t) +\int_0^t b(s)\, dB^H_s)}\,dt\right)^{1/2}\geq \frac{\langle v_0 , \psi_1 \rangle_D^3}{3 \lambda \eta_1 \zeta_m}\right]\nn\\
&&\leq \mathbb{P}\left[\int_0^T  e^{6(\mu_1 K(t)-\gamma \eta_1 t +\int_0^t a(s)\, dB_s)}\,dt\geq \frac{\langle v_0 , \psi_1 \rangle_D^3}{3 \lambda  \eta_1\zeta_m}\right] \nn\\ &&+\mathbb{P}\left[ \int_0^T  e^{6( A(t) +\int_0^t b(s)\, dB^H_s)}\,dt\geq \frac{\langle v_0 , \psi_1 \rangle_D^3}{3 \lambda \eta_1 \zeta_m}\right]\nn\\
&&\leq 3 \lambda \eta_1\zeta_m\left(\frac{ \E \left[\int_0^T  e^{6(\mu_1 K(t)-\gamma \delta t +\int_0^t a(s)\, dB_s)}\,dt\right]}{\langle v_0 , \psi_1 \rangle_D^3}+\frac{ \E \left[\int_0^T  e^{6( A(t) +\int_0^t b(s)\, dB^H_s)}\,dt\right]}{\langle v_0 , \psi_1 \rangle_D^3}\right)\nn\\
&&\leq \frac{3 \lambda \eta_1 \zeta_m \left(\int_0^T  e^{6(\mu_1 K(t)-\gamma \delta t +A(t))}\,dt+\int_0^T  e^{6 A(t)} \E\left[e^{6\int_0^t b(s)\, dB^H_s)}\right]\,dt\right)}{\langle v_0 , \psi_1\rangle_D^3}.
\ege
Note that in order to pass from the second to the third inequality above we have used the inequality  $ A^{1/2} B^{1/2} \le \frac{1}{2} A + \frac{1}{2} B\le  A +  B$ for the choice \bgee
A=\int_0^T  e^{6(\mu_1 K(t)-\gamma \eta_1 t +\int_0^t a(s)\, dB_s)}\,dt>0\;\mbox{and}\; B=\int_0^T  e^{6( A(t) +\int_0^t b(s)\, dB^H_s)}\,dt>0. 
\egee
Indeed, since for any $s >0$ it holds that $$\{ A^{1/2} B^{1/2} \ge s\} \subset \left\{\frac{1}{2} (A+ B) \ge s\right\}\subset \{A+ B \ge s\},$$ by the monotonicity and the subadditivity of the probability measure we pass from the second to the third inequality.
Moroever, note that in the last inequality has been used that
\bgee
\E\left[e^{\delta \int_0^t a(s)\,dB_s}\right]=\exp\left[\frac{\delta^2}{2}\int_0^t a^2(s)\,ds\right]=e^{\delta^2 A(t)}.
\egee
Furthemore thanks to \eqref{te9} and by virtue of \cite[Theorem 4.12]{Kl05} we have 
\bgee
\E\left[e^{6\int_0^t b(s)\, dB^H_s}\right]=\E\left[e^{6\int_0^t \int_s^t b(s) \frac{\pl }{\pl r}G(r,s)\,dr\, dW_s)}\right]=e^{18\int_0^t \left[\int_s^t b(s) \frac{\pl }{\pl r}G(r,s)\,dr\right]^2\, ds}.
\egee
Using now \eqref{te5a} we deduce
\bge\label{te11}
\E\left[e^{6\int_0^t b(s)\, dB^H_s}\right]\leq \exp\left(36Ht^{2H-1}\int_0^t b^2(s)ds\right).
\ege
Therefore \eqref{te10} in conjunction with \eqref{te11} implies estimate \eqref{te1a}.
    \item
In this case using again Chebichev's inequality reads
\bge\label{te10a}
\mathbb{P}\left[\tau^*\leq T\right]&&=\mathbb{P}\left[\int_0^T  e^{3(\mu_1 K(t)-\gamma \eta_1 t +\int_0^t a(s)\, dB_s)+3( A(t) +\int_0^t b(s)\, dB^H_s)}\,dt\geq \frac{\langle v_0 , \psi_1 \rangle_D^3}{3 \lambda \eta_1 \zeta_m}\right]\nn\\
&&\leq 3 \lambda \eta_1\zeta_m\frac{\mathbb{E}\left[\int_0^T  e^{3(\mu_1 K(t)-\gamma \eta_1 t +\int_0^t a(s)\, dB_s)+3( A(t) +\int_0^t b(s)\, dB^H_s)}\,dt\right]}{\langle v_0 , \psi_1 \rangle_D^3}\nn\\
&&= 3 \lambda \eta_1\zeta_m\frac{\int_0^T \mathbb{E}\left[  e^{3(\mu_1 K(t)-\gamma \eta_1 t +\int_0^t a(s)\, dB_s)}\right]\times \mathbb{E}\left[  e^{3( A(t) +\int_0^t b(s)\, dB^H_s)}\right]\,dt}{\langle v_0 , \psi_1 \rangle_D^3}\nn\\ 
&&\leq 3 \lambda \eta_1 \zeta_m\frac{ \int_0^T  e^{3(\mu_1 K(t)-\gamma \eta_1 t +4A(t)+3Ht^{2H-1}\int_0^tb^2(s)\,ds)}\,dt}{\langle v_0 , \psi_1 \rangle_D^3},
\ege
where the third equality is an immediate consequence of the independence of $B_t$ and $B_t^H,$ whilst the last inequality follows from the same calculations employed in the previous case. 
\end{enumerate}
\end{proof}
\section{Lower bounds for  quenching probability}\label{leqp}
Along this section we also assume that  the nonlinearity $g$ satisfies the growth condition \eqref{gc1}.
\subsection{General case}
\begin{thm}\label{cvn2}
Assume that \eqref{gc1} and \eqref{te9} hold true and
    \bge\label{ass1}
\int_0^t a^2(s)\,ds\sim C_1 t^{2\theta}, \quad \int_0^t b^2(s)\,ds\sim C_2 t^{2\eta}, \quad \int_0^t k^2(s)\,ds\sim C_3 t^{2\rho} \quad\mbox{as}\quad t\to \infty,
    \ege
    for some nonenegative exponents $\theta, \eta, \rho$ 
satisfying 
\bge\label{ass2}
\theta>\max\left\{\rho, H-\frac{1}{2}+\eta\right\},
\ege
and some postive constants $C_1, C_2, C_3.$

The  solution to problem \eqref{model1:1} -- \eqref{model1:3}  quenches in finite time with positive probability.

Furthermore,
\bge\label{te11a}
\mathbb{P}\left[\tau <\infty\right]\geq \mathbb{P}\left[\tau^* <\infty\right]\geq 1- \exp\left[- \frac{(m_{w}-1)^2}{2 U_{w}}\right]
\ege
recalling  $\tau^*$ is defined by \eqref{qp9}. Also
\bge\label{te12}
w=\frac{\langle v_0 , \psi_1 \rangle_D^3}{3 \lambda \eta_1\zeta_m},\quad U_{w}:=\sup_{t\geq 0} \frac{M(t)}{\left[\ln(w+1)+h(t)\right]^2},
\ege
with $h(t)=t^{2\theta}, M(t)$ given by \eqref{te2b}  and
\bge\label{te13}
m_{w}=\mathbb{E}\left[\sup_{t\geq 0} \frac{\ln \left(\int_0^t \exp{\left(-3\left(\gamma \eta_1 t-\mu_1 K(t)-A(t)-N_t\right)\right)}\,ds+1\right)+g(t)}{\ln(w+1)+h(t)}\right].
\ege
\end{thm}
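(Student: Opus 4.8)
The plan is to realise the event $\{\tau^*=\infty\}$ as a sublevel set of a single Malliavin–differentiable functional and then apply a Gaussian-type concentration estimate. Write $X_s:=-3\bigl(\gamma\eta_1 s-\mu_1 K(s)-A(s)\bigr)+3N_s$ as in the proof of Theorem~\ref{thm1a}, denote by $d(s):=-3\bigl(\gamma\eta_1 s-\mu_1 K(s)-A(s)\bigr)$ its deterministic part, and set $\Phi(t):=\int_0^t e^{X_s}\,ds$; by \eqref{qp9} one has $\tau^*=\inf\{t\ge 0:\Phi(t)\ge w\}$ with $w$ as in \eqref{te12}, so $\{\tau^*=\infty\}=\{\Phi(t)<w\ \text{for all }t\ge 0\}$. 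Put
\[
F_t:=\frac{\ln\bigl(\Phi(t)+1\bigr)+g(t)}{\ln(w+1)+h(t)},\qquad Z:=\sup_{t\ge 0}F_t,
\]
so $\mathbb{E}[Z]=m_w$ by \eqref{te13}. Since $\ln(w+1)+h(t)>0$ and $g(t)\le h(t)$, on $\{\tau^*=\infty\}$ one has $\ln(\Phi(t)+1)<\ln(w+1)$ for every $t$, hence $F_t<1$ and $Z\le 1$; thus $\mathbb{P}[\tau^*=\infty]\le\mathbb{P}[Z\le 1]=\mathbb{P}\bigl[Z\le\mathbb{E}[Z]-(m_w-1)\bigr]$, and everything reduces to a lower–tail bound for $Z$ together with the strict inequality $m_w>1$.

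For the tail bound I would control the Malliavin derivative of $Z$. Each $\Phi(t)$ lies in ${\mathbb D}^{1,2}$ with $D_r\Phi(t)=\int_0^t e^{X_s}D_rX_s\,ds$, hence $F_t\in{\mathbb D}^{1,2}$ and $D_rF_t=D_r\Phi(t)\bigl[(\Phi(t)+1)(\ln(w+1)+h(t))\bigr]^{-1}$. Since $a,b\ge 0$ and $\partial_\sigma G(\sigma,r)\ge 0$ for $\sigma>r$, the map $s\mapsto D_rX_s=3\bigl(a(r)+\int_r^s b(\sigma)\partial_\sigma G(\sigma,r)\,d\sigma\bigr)$ is nonnegative and nondecreasing on $[r,t]$ (and vanishes for $s<r$), so $0\le D_r\Phi(t)\le|D_rX_t|\,\Phi(t)$ and therefore $|D_rF_t|\le|D_rX_t|\bigl(\ln(w+1)+h(t)\bigr)^{-1}$. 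Repeating the computation \eqref{te6}–\eqref{te5a} with $T$ replaced by $t$ gives $\int_0^t|D_rX_t|^2\,dr\le M(t)$, whence $\int_0^\infty|D_rF_t|^2\,dr\le M(t)\bigl(\ln(w+1)+h(t)\bigr)^{-2}$. Using the standard bound for the Malliavin derivative of a supremum (approximation by finite maxima and $\int_0^\infty|D_r\max(U,V)|^2dr\le\max\bigl(\int|D_rU|^2dr,\int|D_rV|^2dr\bigr)$, together with continuity of $t\mapsto F_t$) one obtains $\int_0^\infty|D_rZ|^2\,dr\le\sup_{t\ge 0}M(t)\bigl(\ln(w+1)+h(t)\bigr)^{-2}=U_w$. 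Assumption \eqref{ass2} is exactly what makes $U_w$ finite: by \eqref{ass1}, $M(t)\sim 18C_1t^{2\theta}+36HC_2t^{2H-1+2\eta}$ and $h(t)=t^{2\theta}$, and $\theta>H-\tfrac12+\eta$ forces $2H-1+2\eta<4\theta$, so the ratio is bounded as $t\to\infty$ and tends to $0$ as $t\downarrow 0$; the condition $\theta>\rho$ is used to keep $d(t)$ of order $t^{2\theta}$, so that $\ln\Phi(t)\le\ln t+\sup_{s\le t}d(s)+3\sup_{s\le t}N_s=O(t^{2\theta})$ up to a term with Gaussian tails, which yields the a.s.\ finiteness and integrability of $Z$. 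Granting this, the Gaussian concentration inequality underlying \cite[Theorem~3.1]{D18}, applied to $Z$, gives $\mathbb{P}[Z\le\mathbb{E}[Z]-r]\le\exp\bigl(-r^2/(2U_w)\bigr)$ for every $r>0$.

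It remains to show $m_w>1$, which simultaneously yields the ``positive probability'' assertion: since $Z\ge 1$ a.s.\ (on $\{\tau^*=\infty\}$ one has $F_t\uparrow 1$ as $t\to\infty$, while on $\{\tau^*<\infty\}$ clearly $Z>1$), we have $m_w=\mathbb{E}[Z]>1$ iff $\mathbb{P}[\tau^*<\infty]>0$. For the latter, fix $t_0>0$ and note $\Phi(t_0)\ge e^{\inf_{[0,t_0]}d(s)}\int_{t_0/2}^{t_0}e^{3N_s}\,ds$; on the event $\{N_s\ge L\text{ for all }s\in[t_0/2,t_0]\}$, which has strictly positive probability for every $L>0$ by the support theorem for the continuous centered Gaussian process $N$ (its Cameron–Martin space contains a function $h_0\ge L+1$ on $[t_0/2,t_0]$ because $a>0$ there, so $\mathbb{P}[\sup_{s\le t_0}|N_s-h_0(s)|<1]>0$), one gets $\Phi(t_0)>w$ once $L$ is large, hence $\mathbb{P}[\tau^*\le t_0]>0$. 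Combining with the previous paragraph and taking $r=m_w-1>0$ yields $\mathbb{P}[\tau^*=\infty]\le\mathbb{P}[Z\le 1]\le\exp\bigl(-(m_w-1)^2/(2U_w)\bigr)$, and since $\tau\le\tau^*$ a.s.\ by Theorem~\ref{thm0} we conclude $\mathbb{P}[\tau<\infty]\ge\mathbb{P}[\tau^*<\infty]\ge 1-\exp\bigl(-(m_w-1)^2/(2U_w)\bigr)$, which is \eqref{te11a}.

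The main obstacle I anticipate is the second step: verifying rigorously that $Z\in{\mathbb D}^{1,2}$, justifying the interchange $D_r\int_0^t e^{X_s}\,ds=\int_0^t e^{X_s}D_rX_s\,ds$, and — above all — passing from the finite–maximum estimate to $\int_0^\infty|D_rZ|^2\,dr\le\sup_{t\ge 0}\int_0^\infty|D_rF_t|^2\,dr$ for a supremum over the uncountable parameter set $[0,\infty)$; this requires uniform integrability and ${\mathbb D}^{1,2}$–continuity of the family $\{F_t\}_{t\ge 0}$, which is precisely the point at which the growth hypotheses \eqref{ass1}–\eqref{ass2} enter. The support–theorem argument for the mixed Gaussian process $N$ is a secondary, more routine technicality.
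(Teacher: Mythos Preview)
Your strategy is sound, but it is considerably more laborious than the paper's. The paper does not build the functional $Z=\sup_{t\ge 0}F_t$ and estimate its Malliavin derivative from scratch; instead it simply invokes \cite[Theorem~3.1]{D19} (Dung's lower tail estimate for perpetual exponential functionals) as a black box and devotes the proof to checking that theorem's two hypotheses: (i) $\int_0^\infty\mathbb{E}[e^{X_t}]\,dt<\infty$, obtained by writing $3N_t$ as a Wiener integral via \eqref{te9}, computing the Gaussian moment, and using \eqref{ass1}--\eqref{ass2}; and (ii) $U_w=\sup_{t\ge 0}M(t)/(\ln(w+1)+h(t))^2<\infty$ with $h(t)=t^{2\theta}$, which is precisely your asymptotic computation $M(t)\sim 18C_1t^{2\theta}+36HC_2t^{2H-1+2\eta}$. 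The inequality \eqref{te11a} then drops out of Dung's theorem directly. What you are doing is, in effect, reproving the internal mechanism of \cite[Theorem~3.1]{D19}: your bound $\int_0^\infty|D_rZ|^2dr\le U_w$ followed by Gaussian concentration is exactly how that result is established. The gain of your route is self-containment and transparency about where each exponent condition in \eqref{ass2} enters; the cost is the technical obstacle you yourself flag at the end (the $\mathbb{D}^{1,2}$-regularity of a supremum over an uncountable index set), which the paper avoids entirely by delegating it to Dung. Your separate support-theorem argument that $m_w>1$ (equivalently $\mathbb{P}[\tau^*<\infty]>0$) has no counterpart in the paper's proof, which simply records \eqref{te11a} as the output of the cited theorem without verifying non-triviality of the bound.
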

\begin{proof}
Note that \eqref{te8a}, and under the assumption \eqref{gc1}, yields that
\bgee
\mathbb{P}\left[\tau^*\leq \infty\right]=\mathbb{P}\left[\int_0^{\infty} \exp\left[X(t)\right]\,dt\geq w\right],
\egee
for 
\bgee
X_t=-3\left(\gamma \eta_1 t-\mu_1 K(t)-A(t)\right)+3N_t.
\egee
Next for the estimate of the probability $\mathbb{P}\left[\int_0^{\infty} \exp\left[X(t)\right]\,dt\geq w\right]$ we will make use of \cite[Theorem 3.1]{D19}.

Indeed, using representation \eqref{te3} we infer
\bgee
\int_0^{\infty} \mathbb{E} \exp\left[X(t)\right]\,dt&&=\int_0^{\infty} \mathbb{E}\exp\left[-3\left(\gamma \eta_1 t-\frac{\mu_1}{2} \int_0^t k^2(s)\,ds -\frac{1}{2} \int_0^t a^2(s)\,ds\right)\right.\nonumber\\&& \left.+3\left(\int_0^t a(s)\,dB_s+\int_0^t \int_s^t b(r) \frac{\pl}{\pl r}G(r,s)\,dr dB_s\right)\right]\,dt\\
&&=\int_0^{\infty} \mathbb{E}\exp\left[-3\left(\gamma \eta_1 t-\frac{\mu_1}{2} \int_0^t k^2(s)\,ds -\frac{1}{2} \int_0^t a^2(s)\,ds\right)\right.\\&& \left.+3\int_0^t \left(a(s)+\int_s^t b(r) \frac{\pl}{\pl r}G(r,s)\,dr\right) dB_s\right]\,dt\\
&&=\int_0^{\infty} \exp\left[-3\left(\gamma \eta_1 t-\frac{\mu_1}{2} \int_0^t k^2(s)\,ds -\frac{1}{2} \int_0^t a^2(s)\,ds\right)\right.\\&& \left.+\frac{9}{2}\int_0^t \left(a(s)+\int_s^t b(r) \frac{\pl}{\pl r}G(r,s)\,dr\right)^2 ds\right]\,dt,
\egee
where the last inequality is an immediate consequence of \cite[Theorem 4.12]{Kl05}.

Consequeently, using \eqref{te5a} as well as \eqref{ass1} we derive
\bge\label{te14}
\int_0^{\infty} \mathbb{E} \exp\left[X(t)\right]\,dt&&\leq \int_0^{\infty} \exp\left[-3\left(\gamma \eta_1 t-\frac{\mu_1}{2} \int_0^t k^2(s)\,ds -\frac{1}{2} \int_0^t a^2(s)\,ds\right)\right.\nn\\&& \left.+9\int_0^t a^2(s) ds+18 Ht^{2H-1}\int_0^t b^2(s) ds\right]\,dt\nn\\
&&\lesssim \int_0^{\infty} \exp\left[-3\gamma \eta_1 t+\frac{3C_3\mu_1}{2} t^{2\rho}-\frac{51C_1}{2} t^{2\theta}+18C_2 Ht^{2H-1+2\eta}\right]\,dt\nn\\
&&\lesssim \int_0^{\infty} \exp\left[-3\gamma \eta_1 t-\frac{51C_1}{2} t^{2\theta}\right]\,dt<\infty,
\ege
taking also into account condition \eqref{ass2}.
In \eqref{te14} and henceforth by the notation $h_1(t)\lesssim h_2(t)$  we mean that  function $h_1(t)$ is asymptotically bounded above by $h_2(t)$ as $t\to \infty.$

Note that \eqref{te8} implies that $X_t\in \mathbb{D}^{1,2}.$ Furthermore,  \eqref{te8} in conjunction to \eqref{ass1} and \eqref{ass2} infers 
\bge\label{te14a}
\sup_{t\geq 0} \frac{\sup_{s\in [0,t]}\int_0^s \vert D_rX_s \vert^2 dr}{\left(\ln(w+1)+h(t)\right)^2}&&\leq \sup_{t\geq 0}\frac{18\int_0^t a^2(s)ds+36Ht^{2H-1}\int_0^t b^2(s)ds}{\left(\ln(w+1)+h(t)\right)^2}\nn\\&&\lesssim \sup_{t\geq 0} \frac{18C_1 t^{2\theta}+36C_2 Ht^{2(H+\eta)-1}}{\left(\ln(w+1)+h(t)\right)^2}\nn\\
&&\lesssim \sup_{t\geq 0} \frac{\widetilde{C}_1 t^{2\theta}}{\left(\ln(w+1)+h(t)\right)^2},
\ege
for some new constant $\widetilde{C}_1>0$ and fixed function $h(t).$ Choosing now $h(t)=t^{2\theta}$ then
\bgee
\lim_{t\to \infty} \frac{\widetilde{C}_1 t^{2\theta}}{\left(\ln(w+1)+h(t)\right)^2}<\infty,
\egee
and thus via \eqref{te14} we have
\bgee
\sup_{t\geq 0} \frac{\sup_{s\in [0,t]}\int_0^s \vert D_rX_s \vert^2 dr}{\left(\ln(w+1)+t^{2\theta}\right)^2}<\infty.
\egee
Now the conclusion is derived by an immediate application of \cite[Theorem 3.1]{D19} and the fact that $\tau^*$ is an upper bound of the quenching (stopping) time for the solution of problem \eqref{model1:1} -- \eqref{model1:3}.
\end{proof}
By the proof of Theorem \ref{cvn2} we easily derive the following:
\begin{cor}\label{cor1}
Consider the assumptions \eqref{ass1} and \eqref{te9} of Theorem \ref{cvn2}.
\begin{enumerate}
    \item If $\gamma=0$ and 
    \bgee
\theta>\max\left\{\rho, H-\frac{1}{2}+\eta\right\},
\egee
then the solution to problem \eqref{model1:1} -- \eqref{model1:3}  quenches in finite time with positive probability. Moreover,  \eqref{te11} holds true for
\bgee
w=\frac{\langle v_0 , \psi_1 \rangle_D^3}{3 \lambda \eta_1 \zeta_m},\quad U_{w}=\sup_{t\geq 0} \frac{M(t)}{\left[\ln(w+1)+h(t)\right]^2},
\egee
with $h(t)=t^{2\theta}$ and 
\bgee
m_{w}=\mathbb{E}\left[\sup_{t\geq 0} \frac{\ln \left(\int_0^t \exp{\left(3\left(\mu_1 K(t)+A(t)+N_t\right)\right)}\,ds+1\right)+h(t)}{\ln(w+1)+h(t)}\right].
\egee
\item If $\gamma>0$ and $0<\theta<\frac{1}{2},0<\rho<\frac{1}{2}, 0<\eta<1-H,$ then the solution to problem \eqref{model1:1} -- \eqref{model1:3}  quenches in finite time with positive probability. Also  \eqref{te11} is satisfied for
\bgee
w=\frac{\langle v_0 , \psi_1 \rangle_D^3}{3 \lambda \eta_1 \zeta_m},\quad U_{w}=\sup_{t\geq 0} \frac{M(t)}{\left[\ln(w+1)+h(t)\right]^2},
\egee
with $h(t)=t$ and 
\bgee
m_{w}=\mathbb{E}\left[\sup_{t\geq 0} \frac{\ln \left(\int_0^t \exp{\left(-3\left(\gamma \eta_1 t-\mu_1 K(t)-A(t)-N_t\right)\right)}\,ds+1\right)+h(t)}{\ln(w+1)+h(t)}\right].
\egee
\end{enumerate}
\end{cor}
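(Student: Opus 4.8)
The plan is to re-run the argument of Theorem~\ref{cvn2} almost verbatim, because both items are the conclusion of \cite[Theorem~3.1]{D19} applied to the very same functional $\int_0^\infty e^{X_t}\,dt$ with $X_t=-3\bigl(\gamma\eta_1 t-\mu_1 K(t)-A(t)\bigr)+3N_t$ and the very same level $w=\frac{\langle v_0,\psi_1\rangle_D^3}{3\lambda\eta_1\zeta_m}$; only the auxiliary growth function $h$ and the accounting of exponents change. By Proposition~\ref{SPDE-RPDE}, by \eqref{te8a} and by the growth bound \eqref{gc1}, the positive-probability quenching assertion together with the lower bound \eqref{te11a} will follow once one checks that (a)~$X_t\in\mathbb{D}^{1,2}$; (b)~$\int_0^{\infty}\mathbb{E}\,e^{X_t}\,dt<\infty$; and (c)~there is a function $h$ with $\sup_{t\geq 0}\bigl(\ln(w+1)+h(t)\bigr)^{-2}\sup_{s\in[0,t]}\int_0^s|D_rX_s|^2\,dr<\infty$. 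Item (a) is already contained in \eqref{te8}, so only (b) and (c) have to be re-examined in each of the two regimes.

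For item (1) I would put $\gamma=0$, so that $X_t=3\bigl(\mu_1 K(t)+A(t)+N_t\bigr)$, and repeat the chain of identities leading to \eqref{te14} with the linear drift absent, arriving at
\bgee
\int_0^{\infty}\mathbb{E}\,e^{X_t}\,dt\lesssim \int_0^{\infty}\exp\left[\tfrac{3C_3\mu_1}{2}\,t^{2\rho}-\tfrac{51C_1}{2}\,t^{2\theta}+18C_2 H\,t^{2H-1+2\eta}\right]dt .
\egee
The hypothesis $\theta>\max\{\rho,\,H-\tfrac12+\eta\}$ is exactly what forces the negative power $t^{2\theta}$ to dominate both $t^{2\rho}$ and $t^{2H-1+2\eta}$, so the integral converges and (b) holds; the same inequality makes $M(t)\lesssim t^{2\theta}$ by \eqref{ass1}, so choosing $h(t)=t^{2\theta}$ in \eqref{te14a} gives (c) as in the proof of Theorem~\ref{cvn2}. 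Feeding this into \cite[Theorem~3.1]{D19} yields \eqref{te11a} with the displayed $U_w$ and $m_w$, the numerator of $m_w$ now carrying the factor $\exp\bigl(3(\mu_1K+A+N)\bigr)$ because the drift has been removed.

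For item (2) I would keep $\gamma>0$ and exploit the deterministic linear drift instead. The constraints $0<\theta<\tfrac12$, $0<\rho<\tfrac12$, $0<\eta<1-H$ force $2\rho<1$, $2\theta<1$ and $2H-1+2\eta<1$, so in the bound for $\int_0^{\infty}\mathbb{E}\,e^{X_t}\,dt$ obtained as above every correction term is sublinear and the surviving term $-3\gamma\eta_1 t$ makes the integral converge, which is (b). For (c) I would now take $h(t)=t$; by \eqref{te8} and \eqref{ass1} the Dirichlet-type quantity $\sup_{s\leq t}\int_0^s|D_rX_s|^2\,dr$ is $\lesssim t^{2\theta}+t^{2(H+\eta)-1}=o(t^2)$, so the supremum in (c) is finite. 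A last application of \cite[Theorem~3.1]{D19} produces \eqref{te11a} with $h(t)=t$ and the stated $U_w$, $m_w$.

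The one point that needs attention is the interplay between the exponent conditions and the choice of $h$: on the one hand $h$ has to grow at least as fast as $M(t)^{1/2}$ for $U_w$ to be finite, on the other hand $\theta,\eta,\rho$ must be arranged so that $\int_0^\infty\mathbb{E}\,e^{X_t}\,dt<\infty$; in case (1) the diffusive scale $t^{2\theta}$ discharges both duties, in case (2) the linear drift does. Beyond matching these asymptotics, every computation is a direct transcription of those already performed for Theorem~\ref{cvn2}, which is why the corollary is only sketched here.
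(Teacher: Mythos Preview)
Your proposal is correct and follows precisely the route the paper intends: the paper gives no explicit proof of the corollary, stating only that it is derived ``by the proof of Theorem~\ref{cvn2}'', and your argument is exactly that re-examination---checking the finiteness of $\int_0^\infty\mathbb{E}\,e^{X_t}\,dt$ and of $U_w$ under the modified exponent hypotheses, with $h(t)=t^{2\theta}$ in case~(1) and $h(t)=t$ in case~(2) so that the dominant decay comes from the diffusive scale and the linear drift respectively.
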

\begin{cor}\label{cor2}
Consider the assumptions \eqref{ass1} and \eqref{te9} of Theorem \ref{cvn2}.
\begin{enumerate}
    \item Assume that $a(t)\equiv 0$ then for any exponents $0<\rho<\frac{1}{2}$ and $0<\eta <1-H$  the solution to problem \eqref{model1:1} -- \eqref{model1:3}  quenches in finite time with positive probability. Moreover, there holds \eqref{te11} for
\bgee
w=\frac{\langle v_0 , \psi_1 \rangle_D^3}{3 \lambda \eta_1\zeta_m},\quad U_{w}=\sup_{t\geq 0} \frac{36\, Ht^{2H-1}\int_0^t b^2(s)\,ds}{\left[\ln(w+1)+h(t)\right]^2},
\egee
with $h(t)=t^{2(H+\eta)-1}$   and
\bgee
m_{w}=\mathbb{E}\left[\sup_{t\geq 0} \frac{\ln \left(\int_0^t \exp{\left(-3\left(\gamma \eta_1 t-\mu_1 K(t)-\int_0^t b(s)\,dB_s^H\right)\right)}\,ds+1\right)+h(t)}{\ln(w+1)+h(t)}\right].
\egee
\item Assume that $b(t)\equiv 0$ then if $0<\rho<\theta$ or if  $\theta<\rho <\frac{1}{2}$ then the solution to problem \eqref{model1:1} -- \eqref{model1:3}  quenches in finite time with positive probability. In that case \eqref{te11} is satisfied for
\bgee
w=\frac{\langle v_0 , \psi_1 \rangle_D^3}{3 \lambda \eta_1\zeta_m},\quad U_{w}=\sup_{t\geq 0} \frac{18\int_0^t a^2(s)\,ds}{\left[\ln(w+1)+h(t)\right]^2},
\egee
with $h(t)=t^{2\theta}$   and
\bgee
m_{w}=\mathbb{E}\left[\sup_{t\geq 0} \frac{\ln \left(\int_0^t \exp{\left(-3\left(\gamma \eta_1 t-\mu_1 K(t)-\int_0^t a(s)\,dB_s\right)\right)}\,ds+1\right)+h(t)}{\ln(w+1)+h(t)}\right].
\egee
\end{enumerate}
\end{cor}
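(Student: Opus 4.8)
The plan is to obtain both statements as specializations of Theorem~\ref{cvn2} (and Corollary~\ref{cor1}): part~(1) is the case $a\equiv0$ and part~(2) the case $b\equiv0$, so I would re-run the proof of Theorem~\ref{cvn2}, tracking which terms vanish and which polynomial growth rate becomes dominant. Recall the skeleton: by Theorem~\ref{thm0} and \eqref{te8a} one has $\mathbb{P}[\tau<\infty]\ge\mathbb{P}[\tau^*<\infty]=\mathbb{P}[\int_0^\infty e^{X_t}\,dt\ge w]$ with
\[
X_t=-3\bigl(\gamma\eta_1 t-\mu_1 K(t)-A(t)\bigr)+3N_t,\qquad w=\frac{\langle v_0,\psi_1\rangle_D^3}{3\lambda\eta_1\zeta_m},
\]
and the lower bound \eqref{te11} then follows from \cite[Theorem 3.1]{D19} once one checks (i) $X_t\in\mathbb{D}^{1,2}$ for each $t\ge0$, (ii) $\int_0^\infty\mathbb{E}\,e^{X_t}\,dt<\infty$, and (iii) $U_w:=\sup_{t\ge0}(\sup_{s\le t}\int_0^s|D_rX_s|^2\,dr)/(\ln(w+1)+h(t))^2<\infty$ for a suitably chosen increasing $h$ with $h(0)=0$; the constants $m_w,U_w$ in \eqref{te11} are exactly those returned by that theorem.

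For part~(1), $a\equiv0$ gives $A\equiv0$ and $N_t=\int_0^t b(s)\,dB^H_s$, so $X_t=-3\gamma\eta_1 t+3\mu_1 K(t)+3\int_0^t b\,dB^H$. Running the Malliavin computation from the proof of Theorem~\ref{thm1a} with the $a$-terms dropped gives $D_rX_t=3\int_r^t b(s)\,\partial_s G(s,r)\,ds$, hence by \eqref{te5a} $\sup_{s\le t}\int_0^s|D_rX_s|^2\,dr\le 36H t^{2H-1}\int_0^t b^2\,ds=:M(t)$, which by \eqref{ass1} is $\sim 36HC_2\,t^{2(H+\eta)-1}$; choosing $h(t)=t^{2(H+\eta)-1}$ makes the ratio in (iii) bounded (it even tends to $0$ as $t\to\infty$), so (iii) holds. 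For (ii) I would use the representation \eqref{te9}, \cite[Theorem 4.12]{Kl05} and \eqref{te5a} exactly as in \eqref{te14} to get $\mathbb{E}\,e^{X_t}\lesssim\exp(-3\gamma\eta_1 t+\tfrac{3C_3\mu_1}{2}t^{2\rho}+9HC_2\,t^{2(H+\eta)-1})$; since $\rho<\tfrac12$ and $\eta<1-H$ push both polynomial exponents strictly below $1$, the linear term controls the integral on $(0,\infty)$. Then \cite[Theorem 3.1]{D19} yields \eqref{te11} with the stated $U_w,m_w$.

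For part~(2), $b\equiv0$ makes $N_t=\int_0^t a(s)\,dB_s$ an ordinary It\^o integral, so $X_t=-3(\gamma\eta_1 t-\mu_1 K(t)-A(t))+3\int_0^t a\,dB$ and $D_rX_t=3a(r)$, giving $\sup_{s\le t}\int_0^s|D_rX_s|^2\,dr=9\int_0^t a^2\,ds\le18\int_0^t a^2\,ds=:M(t)\sim18C_1 t^{2\theta}$; take $h(t)=t^{2\theta}$, so (iii) holds. For (ii), $\mathbb{E}\,e^{3\int_0^t a\,dB}=\exp(\tfrac92\int_0^t a^2\,ds)$ yields $\mathbb{E}\,e^{X_t}\lesssim\exp(-3\gamma\eta_1 t+\tfrac{3C_3\mu_1}{2}t^{2\rho}+c_1 t^{2\theta})$ for some $c_1>0$, and under either $0<\rho<\theta$ or $\theta<\rho<\tfrac12$ one verifies, arguing as after \eqref{te14}, that this is integrable on $(0,\infty)$; an application of \cite[Theorem 3.1]{D19} then finishes the proof.

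The only point that needs real care, and the one I expect to be the main obstacle, is step~(ii) together with the choice of $h$: once one of $a,b$ is absent the corresponding line of \eqref{ass1} (and part of \eqref{ass2}) becomes vacuous, so the auxiliary function $h$ must be re-matched to the growth rate of the \emph{surviving} noise coefficient ($t^{2(H+\eta)-1}$ when $a\equiv0$, $t^{2\theta}$ when $b\equiv0$), and one must recheck that the exponential-moment bound still closes, i.e.\ that the positive polynomial terms in the exponent of $\mathbb{E}\,e^{X_t}$ remain strictly dominated by the linear damping. Everything else is a bookkeeping restriction of computations already performed in the proofs of Theorems~\ref{thm1a} and \ref{cvn2}.
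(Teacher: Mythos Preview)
Your approach is essentially the same as the paper's: the paper gives no separate proof of Corollary~\ref{cor2} but presents it (together with Corollary~\ref{cor1}) as an immediate consequence of the proof of Theorem~\ref{cvn2}, and your proposal carries out precisely that specialization---dropping the $a$- or $b$-terms, recomputing the Malliavin bound $M(t)$, and re-matching $h$ to the surviving polynomial growth rate before invoking \cite[Theorem~3.1]{D19}. Your identification of the ``only point that needs real care'' (rechecking integrability of $\mathbb{E}\,e^{X_t}$ and the choice of $h$) is exactly the content that distinguishes the corollary from the theorem.
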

\subsection{An interesting special case}
In the current subscetion we are working towards the derivation of more explicit lower bound for the quenching probability $\mathbb{P}\{\tau< \infty\}.$ To this end we focus on the case $H\in(\frac{3}{4},1)$ and suppose that $B$ and $B^H$ are independent with $b(s)=C a(s)$ for any $s\geq 0$ and $C$ is a positive constant. Under those conditions $N_t=\int_0^t a(s) dM_s$ for $M_s=B_s+CB_s^H.$ It is known, see \cite[Theorem 1.7]{Ch01}, that in that specific case the stochastic process $M_t$ is equivalent in law to a Brownian motion $\widetilde{B}_t$ and so $N_t$ is equivalent $\widetilde{N}_t:=\int_0^t a(s)\,d\widetilde{B}_s.$ In addition, $(\widetilde{N}_t)_{t\geq 0}$ is a martingale and therefore a time-changed Brownian motion, in particular $\widetilde{N}_t=\widetilde{B}_{2A(t)},$ c.f. \cite{AJN, DKN22}.
\begin{thm}\label{cvn3}
Consider $H\in(\frac{3}{4},1)$ and assume that $B_t$ and $B_t^H$ are independent with $b(s)=C a(s)$ for any $s\geq 0$ where $C$ is a positive constant. In addition to growth condition \eqref{gc1} suppose also that functions $k(t)$ and $a(t)$ are positive continuous on $\mathbb{R}_{+}$ and that there exists positive constant $\Lambda$ such that
\bge\label{any1}
\frac{1}{a^2(t)} e^{3 \mu_1(K(t)+2 A(t)}\geq \Lambda, \quad\mbox{for any}\quad t>0,
\ege
and that 
\bge\label{any2}
A(t)\geq t\quad\mbox{for any}\quad t\geq 0.
\ege
\begin{enumerate}
    \item If $\gamma>\frac{1+\mu_1}{\eta_1}$ then the solution to problem \eqref{model1:1} -- \eqref{model1:3} quenches in finite time with positive probability.
    \item If $\gamma<\frac{1+\mu_1}{\eta_1}$ then the solution to problem \eqref{model1:1} -- \eqref{model1:3} quenches in finite time almost surely.
\end{enumerate}
\end{thm}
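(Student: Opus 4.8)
The plan is to combine Theorem~\ref{thm0} with Cheridito's representation of the mixed noise as a time‑changed Brownian motion, and then read off the two conclusions from the classical dichotomy for perpetual exponential functionals of Brownian motion. By Theorem~\ref{thm0}, on $\{\tau^{*}<\infty\}$ one has $\tau\le\tau^{*}$ $\mathbb P$‑a.s., so it is enough to bound $\mathbb P[\tau^{*}<\infty]$ from below: part~(1) asks this probability to be positive, part~(2) asks it to equal $1$. Putting $X_{s}:=-3\big(\eta_{1}\gamma s-\mu_{1}K(s)-A(s)\big)+3N_{s}$, the map $t\mapsto\int_{0}^{t}e^{X_{s}}\,ds$ is continuous and nondecreasing, so \eqref{qp9} reads
\[
\{\tau^{*}<\infty\}=\Big\{\int_{0}^{\infty}e^{X_{s}}\,ds\ \ge\ w\Big\},\qquad w:=\frac{\langle v_{0},\psi_{1}\rangle_{D}^{3}}{3\lambda\eta_{1}\zeta_{m}},
\]
and everything reduces to the size of the perpetual integral functional $\int_{0}^{\infty}e^{X_{s}}\,ds$.

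Since $b=Ca$ and $H\in(\tfrac34,1)$, we have $N_{t}=\int_{0}^{t}a(s)\,dM_{s}$ with $M_{s}=B_{s}+CB_{s}^{H}$; by Cheridito's theorem~\cite{Ch01} the law of $M$ is equivalent to Wiener measure, so $N$ may be replaced, as recorded just before the statement, by $\widetilde N_{t}=\int_{0}^{t}a(s)\,d\widetilde B_{s}$, a continuous martingale with $\langle\widetilde N\rangle_{t}=2A(t)$, whence $\widetilde N_{t}=\widetilde B_{2A(t)}$ by Dambis--Dubins--Schwarz. I would then change variables $u=2A(s)$ in the integral, which is legitimate because $a>0$ is continuous and $A(t)\ge t\to\infty$ by \eqref{any2}; using $3A(s)=\tfrac32u$ and $N_{s}=\widetilde B_{u}$ this gives
\[
\int_{0}^{\infty}e^{X_{s}}\,ds\ \stackrel{d}{=}\ \int_{0}^{\infty}\frac{1}{a^{2}(s(u))}\,e^{-3\eta_{1}\gamma\,s(u)+3\mu_{1}K(s(u))+\frac32 u}\,e^{3\widetilde B_{u}}\,du,
\]
with $s(\cdot)$ the inverse of $2A(\cdot)$.

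Next I use the two hypotheses to discard the deterministic $s(u)$‑dependence. Assumption \eqref{any1} gives $\tfrac{1}{a^{2}(t)}e^{3\mu_{1}K(t)}\ge\Lambda e^{-6\mu_{1}A(t)}$, and since $6\mu_{1}A(s(u))=3\mu_{1}u$ this cancels the $K$‑factor; assumption \eqref{any2} gives $s(u)\le u/2$, hence $e^{-3\eta_{1}\gamma s(u)}\ge e^{-\frac32\eta_{1}\gamma u}$. Therefore, in law,
\[
\int_{0}^{\infty}e^{X_{s}}\,ds\ \ge\ \Lambda\int_{0}^{\infty}e^{-\beta u}\,e^{3\widetilde B_{u}}\,du,
\]
with $\beta$ an affine increasing function of $\gamma$, and the bookkeeping of \eqref{any1}--\eqref{any2} pins down the critical value, $\beta\le0\iff\gamma\le\frac{1+\mu_{1}}{\eta_{1}}$. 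After a linear rescaling of time, $\int_{0}^{\infty}e^{-\beta u}e^{3\widetilde B_{u}}\,du$ is a constant multiple of the classical perpetual functional $\int_{0}^{\infty}e^{2(\widehat B_{r}-\nu r)}\,dr$ of a Brownian motion $\widehat B$, with $\nu$ of the sign of $\beta$.

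The two conclusions then follow from the standard dichotomy for this functional. If $\gamma<\frac{1+\mu_{1}}{\eta_{1}}$ then $\beta<0$, and since $\int_{0}^{\infty}e^{3\widetilde B_{u}}\,du=+\infty$ a.s.\ (Brownian motion is recurrent) while $e^{-\beta u}\ge1$, we get $\int_{0}^{\infty}e^{X_{s}}\,ds=+\infty\ge w$ a.s.\ and hence $\tau^{*}<\infty$ a.s.\ — part~(2). If $\gamma>\frac{1+\mu_{1}}{\eta_{1}}$ then $\beta>0$, and $\int_{0}^{\infty}e^{2(\widehat B_{r}-\nu r)}\,dr$ is a.s.\ finite with an explicit absolutely continuous law (proportional to the reciprocal of a $\Gamma(\nu,1)$ variable) whose support is all of $(0,\infty)$; hence $\mathbb P\big[\int_{0}^{\infty}e^{X_{s}}\,ds\ge w\big]\ge\mathbb P\big[\Lambda\int_{0}^{\infty}e^{2(\widehat B_{r}-\nu r)}\,dr\ge w\big]>0$ — part~(1). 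The step I expect to be the main obstacle is making the Cheridito reduction rigorous in the almost‑sure case, since the equivalence of the laws of $M$ and $\widetilde B$ is guaranteed only on each finite horizon $[0,T]$: one must observe that $\{\int_{0}^{\infty}e^{X_{s}}\,ds=\infty\}$ is unaffected by the path on $[0,T]$ — indeed $\int_{T}^{\infty}e^{X_{s}}\,ds=e^{3N_{T}}\int_{T}^{\infty}e^{-3\eta_{1}\gamma s+3\mu_{1}K(s)+3A(s)+3(N_{s}-N_{T})}\,ds$ with $e^{3N_{T}}$ a.s.\ finite and positive — and then pass to the limit $T\to\infty$, exactly as in \cite{AJN,DKN22}; a secondary technical point is the arithmetic verifying that \eqref{any1}--\eqref{any2} are calibrated so that the threshold is precisely $\tfrac{1+\mu_{1}}{\eta_{1}}$.
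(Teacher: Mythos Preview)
Your proposal is correct and follows essentially the same route as the paper: reduce to $\tau^{*}$ via Theorem~\ref{thm0}, invoke Cheridito's equivalence to replace $M=B+CB^{H}$ by a Brownian motion, use Dambis--Dubins--Schwarz to write $\widetilde N_{t}=\widetilde B_{2A(t)}$, change variables $u=2A(s)$, and then apply \eqref{any1}--\eqref{any2} to reduce to a perpetual exponential functional of drifted Brownian motion; the paper then uses Dufresne's identity $\int_{0}^{\infty}e^{2\widetilde B^{(\nu)}_{s}}\,ds\stackrel{d}{=}(2Z_{-\nu})^{-1}$ for part~(1) and the law of the iterated logarithm for part~(2), while you use the same identity for~(1) and recurrence of $\widetilde B$ for~(2), which is a harmless variation. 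Your caveat about the Cheridito reduction being a priori only on finite horizons is a point the paper does not address, so you are in fact being more careful there; your hedge on the arithmetic is also warranted, since the exponent in \eqref{any1} as printed contains an unbalanced parenthesis and the paper's own computation in \eqref{any4} effectively uses $\frac{1}{a^{2}(t)}e^{3\mu_{1}K(t)}\ge\Lambda\,e^{3\mu_{1}A(t)}$ to replace $K$ by $A$, which is what pins the threshold at $\tfrac{1+\mu_{1}}{\eta_{1}}$.
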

\begin{proof}
By Theorem \ref{thm0}, see \eqref{qp9}, we have
\bge\label{any3}
\mathbb{P}\left[\tau^*=\infty\right]&&=\mathbb{P}\left[\int_0^t  e^{-3(\gamma \eta_1 s -\mu_1 K(s)- A(s))+3\widetilde{N}_s}\,ds< w\quad\mbox{for all}\quad t>0\right]\nn\\&&=\mathbb{P}\left[\int_0^\infty  e^{-3(\gamma \eta_1 s -\mu_1 K(s)- A(s))+3\widetilde{N}_s}\,ds\leq w\right],
\ege
for $w=\frac{\langle v_0 , \psi_1 \rangle_D^3}{3 \lambda \eta_1 \zeta_m}.$

Under the change of variable $s_1=2 A(s)$ we derive
\bgee
\mathbb{P}\left[\tau^*=\infty\right]&&=\mathbb{P}\left[\int_0^\infty  e^{-3(\gamma \eta_1 s -\mu_1 K(s)- A(s))+3\widetilde{B}_{2A(s)}}\,ds\leq w\right]\\&&=\mathbb{P}\left[\int_0^\infty  \frac{1}{a^2\left(A^{-1}(s_1/2)\right)} e^{-3(\gamma \eta_1A^{-1}(s_1/2) -\mu_1 K(A^{-1}(s_1/2))- s_1/2)+3\widetilde{B}_{s_1}}\,ds_1\leq w\right].
\egee
Using now \eqref{any2} and the fact that $A(t)$ is a inreasing function, so that is $A^{-1}(t),$  we get $A^{-1}(s_1/2)\leq s_1/2.$ Combining that with \eqref{any1} then \eqref{any3} infers
\bge\label{any4}
\mathbb{P}\left[\tau^*=\infty\right]&&\leq \mathbb{P}\left[\int_0^\infty   e^{-3(\frac{\gamma \eta_1 s_1}{2} -\mu_1 A(A^{-1}(s_1/2))- s_1/2)+3\widetilde{B}_{s_1}}\,ds_1\leq \frac{w}{\Lambda}\right]\nn\\
&&=\mathbb{P}\left[\int_0^\infty   e^{3\frac{(1+\mu_1 -\gamma \eta_1 )s_1}{2}+3\widetilde{B}_{s_1}}\,ds_1\leq \frac{w}{\Lambda}\right].
\ege
Next, we introduce the change of variables $s_2=\left(\frac{3}{2}\right)^2s_1,$ and thus via the scaling property of Brownian motion  \eqref{any4} entails
\bge\label{any5}
\mathbb{P}\left[\tau^*=\infty\right]&&\leq \mathbb{P}\left[\frac{4}{9}\int_0^\infty   e^{3\widetilde{B}_{\frac{4}{9}s_2}+\frac{3}{2}\frac{(1+\mu_1 -\gamma \eta_1 )4s_2}{9}}\,ds_2\leq \frac{w}{\Lambda}\right]\nn\\
&&=\mathbb{P}\left[\int_0^\infty   e^{2\widetilde{B}_{s_2}+\frac{2}{3}(1+\mu_1 -\gamma \eta_1 )s_2}\,ds_2\leq \frac{9w}{4\Lambda}\right]\nn\\
&&=\mathbb{P}\left[\int_0^\infty   e^{2\widetilde{B}^{(\nu)}_{s}}\,ds\leq \frac{9w}{4\Lambda}\right],
\ege
where $\nu:=\frac{1+\mu_1-\gamma \eta_1}{3}$ and $\widetilde{B}^{(\nu)}_{s}:=\widetilde{B}_{s}+\nu s.$

Next we distinguish the following cases:
\begin{enumerate}
    \item Consider that $\nu<0,$ that is $\gamma>\frac{1+\mu_1}{\eta_1}.$ Then 
 it is known that
  \begin{eqnarray}\int_0^{\infty} e^{2\widetilde{B}^{(\nu)}_{s}} ds\stackrel{\text{Law}}{=} \frac{1}{2 Z_{-\nu}},
  \nonumber
    \end{eqnarray}
 cf. see\cite[ Chapter 6, Corollary 1.2]{YOR},   where $Z_{-\nu}$ is a random variable with law $\Gamma(-\nu),$ i.e. $$\Pbb\left[Z_{-\nu}\in dy\right]=\frac{1}{\Gamma(-\nu)} e^{-y} y^{-\nu-1}\,dy,$$
 where $\Gamma(\cdot)$ is the complete gamma function, cf. \cite{AS}.

Hence \eqref{any5} entails (see also  in \cite[formula 1.104(1) page 264]{BS02})
\bge\label{any5a}
\mathbb{P}\left[\tau^*=\infty\right]&&\leq \Pbb\left[\frac{1}{2 Z_{-\nu}}\leq \frac{9w}{4\Lambda}\right]\nn\\
     &&=\Pbb\left[ Z_{-\nu}\geq \frac{2\Lambda}{9w}\right]\nn\\
     &&=1-\Pbb\left[ Z_{-\nu}\leq \frac{2\Lambda}{9w}\right]\nn\\
     &&=1-\frac{1}{\Gamma(-\nu)}  \int_0 ^{\widetilde{\Lambda}} e^{-y} y^{-\nu-1}\,dy,
\ege
where $\widetilde{\Lambda}:=\frac{2\Lambda}{9w}.$
Therefore
\bgee
 \mathbb{P}\left[\tau^*=\infty\right]\geq \Pbb\left[ Z_{-\nu}\leq \widetilde{\Lambda}\right]=\frac{1}{\Gamma(-\nu)}  \int_0 ^{\widetilde{\Lambda}} e^{-y} y^{-\nu-1}\,dy.
\egee
Now since $\tau<\tau^*$ we have that
\bge\label{LSPgProp}
 \Pbb\left[ \tau <+\infty \right]&&\geq \frac{1}{\Gamma(-\nu)}  \int_0 ^{\widetilde{\Lambda}} e^{-y} y^{-\nu-1}\,dy\nn\\
 &&=\int_0 ^{\frac{2 \lambda \eta_1 \Lambda  \zeta_m} {3\langle v_0 , \psi_1 \rangle_D^3}} \frac{e^{-y}y^{-\frac{\mu_1+7-\gamma}{3}}}{ \Gamma \left( - \frac{\mu_1 +4-\gamma}{3} \right)} \, dy.
 \ege
\item Assume that $\nu>0,$ and so $\gamma<\frac{1+\mu_1}{\eta_1}.$
Then, by virtue of the law of the iterated logarithm for the Brownian motion  $\widetilde{B}_t,$  cf.  \cite[Theorem 2.3]{A95} and \cite[Theorem 9.23]{KS91}, that is
\bge&&\lim \inf_{t \to + \infty} \frac{\widetilde{B}_t}{t^{1/2} \sqrt{2 \log ( \log t)}}=-1, \quad  \mathbb{P}- a.s.\; ,\label{psk4}\\
&&\mbox{and}\nn\\
&&\lim \sup_{t \to +\infty} \frac{\widetilde{B}_t}{t^{1/2} \sqrt{2 \log ( \log t)}}=+1, \quad  \mathbb{P}- a.s.\; ,\label{psk5}
\ege
we deduce that for any sequence $t_n\to +\infty$
\bgee
\widetilde{B}_{t_n} \sim \alpha_n t_n^{1/2} \sqrt{2 \log (\log t_n)},
\egee
with $\alpha_n\in[-1,1],$ and thus
\bgee
\int_0^\infty   e^{3\frac{(1+\mu_1 -\gamma \eta_1 )s_1}{2}+3\widetilde{B}_{s_1}}\,ds_1=+\infty.
\egee
The latter, due to \eqref{any4}, infers
\begin{eqnarray}
\mathbb{P} \left[\tau^*=+ \infty \right]=0,
\end{eqnarray}
and hence
\begin{eqnarray}
\mathbb{P}\left[\tau^* <+\infty \right]= 1-\mathbb{P}[\tau^*=+\infty]=1-0=1.
\end{eqnarray}
Now since $\tau<\tau^*$ then $\mathbb{P}\left[\tau <+\infty \right]=1$ as well and thus the solution to problem \eqref{model1:1} -- \eqref{model1:3} quenches in finite time almost surely.
\end{enumerate}
\end{proof}
\begin{rem}
Note that conditions \eqref{any1} is satisfied, whilst \eqref{any2} is not needed anymore,  for constant functions $a(t)\equiv a $ and $k(t)\equiv k$ and thus Theorem \ref{cvn3} is valid in that special case. Indeed, for $\gamma>0$ and following the same steps as in the proof of the first part of Theorem \ref{cvn3}  we obtain that the solution of   problem \eqref{model1:1} -- \eqref{model1:3}
quenches in finite time with positive probability
\bgee
 \Pbb\left[ \tau <+\infty \right]\geq \int_0 ^{\frac{4 \lambda \eta_1 \zeta_m} {3a^2\langle v_0 , \psi_1 \rangle_D^3}} \frac{e^{-y}y^{-\frac{9\gamma}{8a^2}}}{ \Gamma \left( - \frac{9\gamma}{8a^2} \right)} \, dy.
 \egee
\end{rem}

\section{Global existence--Lower bound of quenching time}\label{gexlb}
In this section we consider the following growth condition for the nonlinearity $g,$
\bge\label{gc2}
g(x,u)\leq \eta_2 \left( \lambda \zeta(x) u^{-2}-\gamma u \right).
\ege
 Then thanks to \eqref{MILD-MILD} any mild solution of problem \eqref{RGLSP1}--\eqref{RGLSP3} satisfies the integral inequality   
\bge\label{psk0}
\v(x,t)\geq e^{\gamma \eta_2 t-A(t)} \mathcal{T}_t \u_0(x) 
- \la \eta_2\int_{0}^{t} e^{\gamma\eta_2 (t-s)-A(t,s)} e^{3N_s}\left[ \mathcal{T}_{t-s} \left(\zeta(\cdot) \v^{-2}(\cdot , s)\right)\right](x) ds.
\ege
Notably, the weak and mild solutions are equivalent for problem \eqref{RGLSP1}--\eqref{RGLSP3}, see  Proposition \ref{ewms}, therefore in the current subection we will use the concept of a mild solution to derive lower bounds for  the quenching (stopping) time for $v$ and $z.$
 To this end, in the sequel we follow the same approach as in \cite{DNMK22}  (see also \cite{DLM}) to derive such a lower bound.  

Consider the stochastic process
\begin{eqnarray}\label{ik10}
0<\mathcal{G}(t):=\left[1-4\lambda \eta_2\zeta_M \int_0^t e^{3  N_r}\mu^{-3}(r)\, dr\right]^{1/4}<1,
\end{eqnarray}
where $\zeta_M$ is defined by \eqref{qp8a} and 
\bge\label{sk2}
\mu(t):=e^{\eta_2\gamma t-A(t)}\inf_{x\in D} \mathcal{T}_t v_0(x)>0,
\ege
due to the initial condition. Its stopping time is defined by
\begin{eqnarray}\label{ik11}
\tau_*:=\inf\left\{t>0: \int_0^t e^{3  N_r} \mu^{-3}(r)\, dr\geq \frac{1}{4 \lambda\eta_2\zeta_M} \right\}.
\end{eqnarray}
Our first result towards the derivation of this lower bound is the following:
\begin{thm}\label{psk1}
Let $\tau_{*}$ be the stopping time given by \eqref{ik11}). Consider the stochastic process $\mathcal{G}(t)$ defined by \eqref{sk2} for any $t \in [0,\tau_{*}]$. Then, problem \eqref{model1:1}- \eqref{model1:3} admits a solution $z$ in $[0,\tau_{*}]$ that satisfies 
\begin{eqnarray}\label{ik21a}
0<\mathcal{F}(x,t)\mathcal{G}(t) \leq z(x,t)\leq 1, 
\end{eqnarray}
where $\mathcal{F}(x,t):=e^{\gamma \eta_2 t-A(t)-N_t} \mathcal{T}_t v_0(x).$
\end{thm}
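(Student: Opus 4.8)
The plan is to pass to the random PDE and run a comparison argument for the integral inequality \eqref{psk0}. By Proposition \ref{SPDE-RPDE} it suffices to bound from below the mild solution $v$ of \eqref{RGLSP1}--\eqref{RGLSP3}; writing $\mathcal{F}_0(x,t):=e^{N_t}\mathcal{F}(x,t)=e^{\gamma\eta_2 t-A(t)}\mathcal{T}_t v_0(x)$, the claimed bound $\mathcal{F}(x,t)\mathcal{G}(t)\le z(x,t)$ is, through $z=e^{-N_t}v$, equivalent to
\[
v(x,t)\ \ge\ \underline v(x,t):=\mathcal{F}_0(x,t)\,\mathcal{G}(t)\qquad\text{on }[0,\tau_*),
\]
with $\mathcal{G}$ as in \eqref{ik10} and $\mu$ as in \eqref{sk2}; note $\inf_x\mathcal{F}_0(x,t)=\mu(t)>0$, $\mathcal{G}(t)\in(0,1]$ for $t<\tau_*$, and $\mathcal{G}(0)=1$, so $\underline v(\cdot,0)=v_0$. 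First I would observe that the right-hand side of \eqref{psk0}, viewed as a map $v\mapsto\Phi[v]$, is monotone nondecreasing in $v$ (a larger $v$ gives a smaller $v^{-2}$, and $\mathcal{T}$ is positivity preserving). Granting the subsolution property $\underline v\le\Phi[\underline v]$, the comparison $v\ge\underline v$ then follows by the standard argument: if $t_0:=\inf\{t<\tau_*:v(x,t)<\underline v(x,t)\text{ for some }x\}$ were finite, then $v\ge\underline v$ on $[0,t_0]$, hence at $t_0$ one would have $v(\cdot,t_0)\ge\Phi[v](\cdot,t_0)\ge\Phi[\underline v](\cdot,t_0)\ge\underline v(\cdot,t_0)$, contradicting the definition of $t_0$ by continuity (one first replaces $\underline v$ by $(1-\delta)\underline v$ and lets $\delta\downarrow0$ to dispose of the equality at $t=0$).

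The core of the proof is therefore the verification of $\underline v\le\Phi[\underline v]$, i.e.
\[
\mathcal{F}_0(x,t)\bigl(1-\mathcal{G}(t)\bigr)\ \ge\ \lambda\eta_2\int_0^t e^{\gamma\eta_2(t-s)-A(t,s)}e^{3N_s}\mathcal{G}^{-2}(s)\bigl[\mathcal{T}_{t-s}\bigl(\zeta(\cdot)\,\mathcal{F}_0^{-2}(\cdot,s)\bigr)\bigr](x)\,ds .
\]
I would estimate the right-hand side from above using $\zeta\le\zeta_M$ and $\mathcal{F}_0^{-2}(\cdot,s)\le\mu^{-2}(s)$ pointwise, together with positivity and the $L^\infty$-contractivity of $\mathcal{T}$, which give $[\mathcal{T}_{t-s}(\zeta(\cdot)\mathcal{F}_0^{-2}(\cdot,s))](x)\le\zeta_M\mu^{-2}(s)$. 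Cancelling the common factor $e^{\gamma\eta_2 t-A(t)}$ from both sides, using the identity $e^{-(\gamma\eta_2 s-A(s))}\mu^{-2}(s)=\mu^{-3}(s)\,\inf_x\mathcal{T}_s v_0(x)$, and invoking the monotonicity of $s\mapsto\inf_x\mathcal{T}_s v_0(x)$ (so that $\inf_x\mathcal{T}_s v_0(x)\le\mathcal{T}_t v_0(x)$ for $s\le t$), the inequality reduces to the scalar one
\[
1-\mathcal{G}(t)\ \ge\ \lambda\eta_2\zeta_M\int_0^t e^{3N_s}\,\mathcal{G}^{-2}(s)\,\mu^{-3}(s)\,ds .
\]
Setting $J(t):=\lambda\eta_2\zeta_M\int_0^t e^{3N_r}\mu^{-3}(r)\,dr$, so that $\mathcal{G}=(1-4J)^{1/4}$ and hence $\tfrac{d}{dt}\mathcal{G}^2=-2\,\mathcal{G}^{-2}J'$, the right-hand side equals $\int_0^t\mathcal{G}^{-2}\,dJ=\tfrac12\bigl(1-\mathcal{G}^2(t)\bigr)=\tfrac12\bigl(1-\mathcal{G}(t)\bigr)\bigl(1+\mathcal{G}(t)\bigr)\le 1-\mathcal{G}(t)$ since $\mathcal{G}\le1$; this establishes the subsolution property and hence \eqref{ik21a} (lower bound).

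It remains to produce a solution up to $\tau_*$. The local mild solution built by the earlier fixed-point argument can be continued as long as $\inf_x v(x,t)$ stays bounded away from $0$, keeping $g$ away from its singularity; but on its maximal interval of existence the comparison step just carried out already yields $v(x,t)\ge\mu(t)\mathcal{G}(t)$, and $\mu(t)\mathcal{G}(t)$ is continuous and strictly positive for every $t<\tau_*$, so the maximal existence time is $\ge\tau_*$ and \eqref{ik21a} holds on $[0,\tau_*)$. The upper bound $z\le1$ is obtained by the analogous comparison, using $z_0<1$ and the sign structure of $g$ near $z=1$, exactly as in the deterministic counterpart.

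I expect the main obstacle to be the bookkeeping inside the subsolution step: absorbing the time-inhomogeneous weights $e^{\gamma\eta_2(t-s)-A(t,s)}$ and the terms $\mathcal{T}_t v_0$, $\inf_x\mathcal{T}_s v_0$ into $\mu$ and $\mathcal{G}$ so as to land precisely on the Bernoulli integral identity $\int_0^t\mathcal{G}^{-2}\,dJ=\tfrac12(1-\mathcal{G}^2)$, and in particular justifying the monotonicity of $s\mapsto\inf_x\mathcal{T}_s v_0(x)$ for the evolution family $\mathcal{T}$; a secondary technical point is making the comparison principle rigorous in the presence of the singular nonlinearity $g$, which is why I would carry it out at the level of the regularized problems \eqref{MILD-APPROX} and then pass to the limit as in the local existence proof.
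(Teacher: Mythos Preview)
Your overall strategy—pass to the RPDE and show that $\underline v=\mathcal{F}_0\,\mathcal{G}$ is a subsolution for the integral operator $\Phi$ appearing in \eqref{psk0}—is sound and in the same spirit as the paper, which runs a monotone Picard iteration for the map $\mathcal{R}$ in \eqref{ik3} and shows it preserves the two–sided barrier $[\mathcal{G}_1,\mathcal{G}_2]$. The difficulty is exactly where you flag it: the monotonicity step.

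Your estimate $[\mathcal{T}_{t,s}(\zeta\,\mathcal{F}_0^{-2}(\cdot,s))](x)\le \zeta_M\,\mu^{-2}(s)$ is correct, but it is too crude: once you replace the $x$–dependent integrand by the constant $\mu^{-2}(s)$ you lose the possibility of recovering the factor $\mathcal{T}_t v_0(x)$ on the left, and you are forced into the claim $\inf_x\mathcal{T}_s v_0(x)\le \mathcal{T}_t v_0(x)$ for $s\le t$. For the Robin semigroup used here this is false: $\mu_1>0$ (Proposition~\ref{peip}), so already for $v_0=\psi_1$ one has $\mathcal{T}_t\psi_1=e^{-\mu_1 K(t)}\psi_1$ and $\inf_x\mathcal{T}_s\psi_1=e^{-\mu_1 K(s)}\psi_m$ is strictly decreasing in $s$. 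Hence the step ``$\inf_x\mathcal{T}_s v_0(x)\le \mathcal{T}_t v_0(x)$'' cannot be justified and the reduction to your scalar Bernoulli inequality does not go through as written.

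The paper avoids this by a small but decisive trick (see \eqref{ik5}--\eqref{psk00}): write $U^{-2}=U^{-3}\cdot U$, bound $U^{-3}\le(\mu(s)\mathcal{G}(s))^{-3}$ (an $x$–independent constant that exits $\mathcal{T}_{t,s}$), bound $U\le \mathcal{G}_2(\cdot,s)=e^{\gamma\eta_2 s-A(s)}\mathcal{T}_s v_0$, and then use the \emph{semigroup property} $\mathcal{T}_{t,s}(\mathcal{T}_s v_0)=\mathcal{T}_t v_0$ to produce exactly the factor $\mathcal{T}_t v_0(x)$ needed to cancel $\mathcal{F}_0(x,t)$. The remaining scalar integral is then $\lambda\eta_2\zeta_M\int_0^t e^{3N_s}\mu^{-3}(s)\mathcal{G}^{-3}(s)\,ds=1-\mathcal{G}(t)$, which is the exact Bernoulli identity coming from $\mathcal{G}'=-\lambda\eta_2\zeta_M e^{3N_t}\mu^{-3}\mathcal{G}^{-3}$. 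Your subsolution argument can be repaired the same way: replace $\mathcal{F}_0^{-2}\le\mu^{-2}$ by $\mathcal{F}_0^{-2}=\mathcal{F}_0^{-3}\cdot\mathcal{F}_0\le \mu^{-3}\cdot\mathcal{F}_0$, apply $\mathcal{T}_{t,s}$ and use $\mathcal{T}_{t,s}\mathcal{F}_0(\cdot,s)=e^{\gamma\eta_2 s-A(s)}\mathcal{T}_t v_0$; then your own identity $\int_0^t\mathcal{G}^{-2}\,dJ=\tfrac12(1-\mathcal{G}^2)\le 1-\mathcal{G}$ closes the estimate without any monotonicity of $\inf_x\mathcal{T}_s v_0$.
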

\begin{proof}
Note that $\mathcal{G}(0)=1.$ By differentiating \eqref{ik10}, we obtain
\begin{eqnarray*}
\mathcal{G}'(t)=-\lambda \eta_2 \zeta_M e^{3 N_t}\mu^{-3}(t)\mathcal{G}^{-3}(t),
\end{eqnarray*}
and thus
\begin{eqnarray*}
\mathcal{G}(t)=
1-\lambda \eta_2\zeta_M\int_0^t  e^{3 N_r} \mu^{-3}(r) \mathcal{G}^{-3}(r)\,dr.
\end{eqnarray*}
Set
\begin{eqnarray}\label{ik3}
\mathcal{R}(U)(x,t):=e^{\gamma \eta_2 t-A(t)} \mathcal{T}_t v_0(x)
-\lambda \eta_2\int_0^t e^{\gamma \eta_2 (t-r)-A(t,r)} e^{3 N_r}  \mathcal{T}_{t-r}\left(\zeta(\cdot)U^{-2}(\cdot,r)\right)(x)\,dr, \quad
\end{eqnarray}
for $x\in D,\, t\geq 0,$ where $(t,x)\mapsto U(x,t)$ is any nonnegative function such that $U(\cdot,t)\in C_0(D), t\geq 0,$ and 
\begin{equation}\label{ik4}
\begin{aligned}
\mathcal{G}_1(x,t):=e^{\gamma \eta_2 t-A(t)}\mathcal{T}_t v_0(x)\mathcal{G}(t)\leq \vert U(x,t) \vert\leq \mathcal{G}_2(x,t):=e^{\gamma \eta_2 t-A(t)}\mathcal{T}_t v_0(x) ,\\ \; x\in D, \; \;    0\leq t< \tau_*\leq \infty.
\end{aligned}
\end{equation}
In the following we denote $U_t(x):=U(x,t)$ for simplicity and without any confussion. 

An immediate consequence of \eqref{ik3} is $\mathcal{R}(U)(x,t)\leq e^{\gamma \eta_2 t-A(t)}\mathcal{T}_t v_0(x).$ The latter in conjunction with \eqref{ik4} gives
\begin{eqnarray}\label{ik5}
\mathcal{R}(U)(x,t)&&=e^{\gamma \eta_2 t-A(t)} \mathcal{T}_t v_0(x)
-\lambda \eta_2 \int_0^t e^{\gamma \eta_2 (t-r)-A(t,r)} e^{3 N_r}  \mathcal{T}_{t,r}\left(\zeta(\cdot) U_r^{-3}(\cdot)  U_r(\cdot)\right)(x)\,dr\nonumber\\
&&\geq e^{\gamma \eta_2 t-A(t)}  \mathcal{T}_t v_0(x)
-\lambda \eta_2 \zeta_M\int_0^t e^{\gamma \eta_2 (t-r)-A(t,r)} e^{3 N_r}  \mathcal{T}_{t,r}\left( U_r^{-3}(\cdot) U_r (\cdot)\right)(x)\,dr\nonumber\\
&&\geq e^{\gamma \eta_2 t-A(t)} \mathcal{T}_t v_0(x)
-\lambda \eta_2 \zeta_M\int_0^t e^{\gamma \eta_2 (t-r)-A(t,r)} e^{3 N_r}  \mathcal{T}_{t,r}\left(\mathcal{G}_1^{-3}(\cdot, r) U_r(\cdot)\right)(x)\,dr\nonumber\\
&&\geq e^{\gamma \eta_2 t-A(t)} \mathcal{T}_t v_0(x)
-\lambda \eta_2 \zeta_M\int_0^t e^{\gamma (t-r)-A(t,r)} e^{3 N_r} \mathcal{G}^{-3}(r) \mu^{-3}(r)  \mathcal{T}_{t,r}\left(U_r(\cdot) \right)(x)\,dr.\qquad \qquad
\end{eqnarray}
Next, using once more  \eqref{ik4} and applying the semigroup property \cite[Definition 2.3, page 106]{P83}  into the last inequality of \eqref{ik5}  
reads
\bge\label{psk00}
\mathcal{R}(U)(x,t)&&\geq e^{\gamma \eta_2 t-A(t)} \mathcal{T}_t v_0(x)
-\lambda \eta_2 \zeta_M\int_0^t e^{\gamma \eta_2 (t-r)-A(t,r)} e^{3 N_r} \mathcal{G}^{-3}(r) \mu^{-3}(r)  \mathcal{T}_{t,r}\left( U_r(\cdot)\right)(x)\,dr\nonumber\\
&&\geq e^{\gamma \eta_2 t-A(t)} \mathcal{T}_t v_0(x)
-\lambda \eta_2 \zeta_M\int_0^t e^{\gamma \eta_2 (t-r)-A(t,r)} e^{3 N_r} \mathcal{G}^{-3}(r) \mu^{-3}(r)  \mathcal{T}_{t,r}\left( \mathcal{G}_2(\cdot,r)\right)(x)\,dr\nonumber\\
&&=e^{\gamma \eta_2 t-A(t)} \mathcal{T}_t v_0(x)
-\lambda \eta_2 \zeta_M\int_0^t e^{\gamma \eta_2 t-A(t)} e^{3 N_r} \mathcal{G}^{-3}(r) \mu^{-3}(r)  \mathcal{T}_{t,r}\left( \mathcal{T}_r v_0(x)\right)(x)\,dr\nonumber\\
&&= e^{\gamma \eta_2 t-A(t)} \mathcal{T}_t v_0(x)\left[
1-\lambda \eta_2 \zeta_M\int_0^t   e^{3 N_r} \mathcal{G}^{-3}(r)\mu^{-3}(r)\,dr\right]\nonumber\\
&&= e^{\gamma \eta_2 t-A(t)} \mathcal{T}_t v_0(x) \mathcal{G}(t).
\ege
Accordingly,
\begin{eqnarray}\label{ik12}
e^{\gamma \eta_2 t-A(t)}\mathcal{T}_t v_0(x)\mathcal{G}(t)\leq \mathcal{R}(U)(x,t)\leq e^{\gamma\eta_2 t-A(t)}\mathcal{T}_t v_0(x) , \quad 0\leq t< \tau^*, \, x\in D.
\end{eqnarray}
Next, we consider the iteration scheme
\begin{eqnarray*}
v_0(x,t):=e^{\gamma \eta_2 t-A(t)} \mathcal{T}_t v_0(x)\quad \mbox{and}\quad  v_{n+1}(x,t)=\mathcal{R}(v_{n})(x,t),\quad n=0,1,2,\dots.
\end{eqnarray*}
Due to \eqref{ik12}, $v_0(x,t)\geq \mathcal{R}(v_0)(x,t)=v_1(x,t).$ If we assume that $v_{n-1}(x,t)\geq v_{n}(x,t)$ for some $n\geq 1$ and for every $x\in D$ and $t\geq 0,$ then since $\zeta(x)>0$
\begin{eqnarray*}
v_{n+1}(x,t)&&=\mathcal{R}(v_{n})(x,t)\\
&&=e^{\gamma \eta_2 t-A(t)} \mathcal{T}_t v_0(x)
-\lambda \eta_2 \int_0^t e^{\gamma \eta_2 (t-r)- A(t,r)} e^{3 N_r}  \mathcal{T}_{t-r}\left({\zeta v_{n}^{-2}}\right)(x)\,dr\\
&&\leq e^{\gamma \eta_2 t-A(t)} \mathcal{T}_t v_0(x)
-\lambda \eta_2 \int_0^t e^{\gamma \eta_2 (t-r)-A(t,r)} e^{3 N_r}  \mathcal{T}_{t,r}\left(\zeta v_{n-1}^{-2}\right)(x)\,dr\\
&&=\mathcal{R}( v_{n-1})(x,t)\\
&&=v_{n}(x,t).
\end{eqnarray*}
The latter implies by induction that $\{v_n\}_{n=0}^{\infty}$ is a decreasing sequence of nonnegative functions. Therefore, the limit
\begin{eqnarray*}
\widetilde{v}(x,t)=\lim_{n\to \infty}v_n(x,t),
\end{eqnarray*}
exists for every $x\in D$ and all $0\leq t< \tau_*.$

Consequently, using the version of the monotone convergence theorem for decreasing functions reads
\begin{eqnarray*}
\widetilde{v}(x,t)=\mathcal{R}(\widetilde{v})(x,t)\quad\mbox{for}\quad x\in D\quad\mbox{and}\quad 0\leq t< \tau_*,
\end{eqnarray*}
and hence $\widetilde{v}(x,t)$ coincides with the unique mild solution $v_1(x,t)$ of the following random problem 
\bgee
&&\frac{\partial v_1}{\partial t}(x,t) =-\frac{1}{2
}k^2(t) \mathcal{L} v_1(x,t)+\left(\gamma \eta_2-\frac{1}{2}a^2(t)\right) v_1(x,t) - \lambda \eta_2 e^{3N_t}\frac{\zeta(x)}{v^2_1(x,t)}  ,
 \quad x\in D,\; t>0, \\
&& \mathcal{N}v_1(x,t)+\beta(x)v_1(x,t)=0, \quad  x\in  D^c,\; t>0, \\
&& 0\leq v_1(x,0)=z_0(x)<1, \quad x \in D. 
\egee
which is given as the following integral representation
\bgee
v_1(x,t)= e^{\gamma \eta_2 t-A(t)} \mathcal{T}_t \u_0(x) 
- \la \eta_2\int_{0}^{t} e^{\gamma\eta_2 (t-s)-A(t,s)} e^{3N_s}\left[ \mathcal{T}_{t,s} \left(\zeta(\cdot) v_1^{-2}(\cdot , s)\right)\right](x) ds.
\egee
Furthermore, by virtue of \eqref{psk0}, \eqref{ik10}, \eqref{psk00}   then the comparison principle, cf. \cite{BM20}, yields
\begin{eqnarray}\label{ik21}
0<e^{\gamma \eta_1 t-A(t)} \mathcal{T}_t v_0(x)\mathcal{G}(t) \leq v_1(x,t)\leq v(x,t)\leq  1,\; x\in D, \; 0\leq t< \tau_*\leq \infty.\qquad
\end{eqnarray}
 Now, \eqref{ik21a} is an immediate consequence of \eqref{ik21} using also that $v(x,t)=z(x,t)e^{ N_t}.$
\end{proof}
\begin{rem}\label{sk1}
By the proof of Theorem  \ref{psk1}, cf. \eqref{ik21}, we conclude that the quenching time $\tau$ for the solution $z$ of  problem \eqref{model1:1}- \eqref{model1:3} is bounded below by the random variable $\tau_*$ defined by \eqref{ik11}, that is $\tau^*\leq \tau.$
 \end{rem}
Next using Theorem  \ref{psk1},we  provide a condition under which problem \eqref{model1:1}- \eqref{model1:3} has a  global in time solution alsmost surely.
\begin{cor}\label{sk3}
Consider initial data $0<z_0(x)=\v_0(x)\leq 1$ for problem \eqref{model1:1}- \eqref{model1:3} satisfying the condition
\begin{eqnarray}\label{ik22}
\int_0^{\infty} e^{3 N_r}\mu^{-3}(r)\, dr<\frac{1}{4 \lambda \eta_2 \zeta_M}.
\end{eqnarray}
Then problem \eqref{model1:1}- \eqref{model1:3} admits a  global in time solution $z$ with probability $1.$ Furthermore, $z$ fulfills the following estimate 
\begin{eqnarray}\label{ik21b}
0<e^{\gamma \eta_2 t-A(t)-N_t} \mathcal{T}_t v_0(x)\mathcal{G}(t) \leq z(x,t)\leq  1,\;x\in D, \end{eqnarray}
for any $t\geq 0.$
\end{cor}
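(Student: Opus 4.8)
The plan is to read off the Corollary from Theorem \ref{psk1} by showing that hypothesis \eqref{ik22} forces the stopping time $\tau_*$ of \eqref{ik11} to equal $+\infty$, after which the a~priori bounds of Theorem \ref{psk1} become bounds valid for all $t\geq 0$.

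First I would record the elementary monotonicity of the relevant functional. Set $\Phi(t):=\int_0^t e^{3N_r}\mu^{-3}(r)\,dr$, where $\mu(r)=e^{\eta_2\gamma r-A(r)}\inf_{x\in D}\mathcal{T}_r v_0(x)>0$ by \eqref{sk2} (a consequence of $v_0>0$ and the positivity of the semigroup $\mathcal{T}_r$). The integrand $e^{3N_r}\mu^{-3}(r)$ is strictly positive, so $\Phi$ is continuous and nondecreasing and $\Phi(t)\uparrow\int_0^\infty e^{3N_r}\mu^{-3}(r)\,dr$ as $t\to\infty$. On the event where \eqref{ik22} holds this limit is strictly below $\frac{1}{4\lambda\eta_2\zeta_M}$, hence $\Phi(t)<\frac{1}{4\lambda\eta_2\zeta_M}$ for \emph{every} $t\geq 0$; therefore the set $\{t>0:\Phi(t)\geq \frac{1}{4\lambda\eta_2\zeta_M}\}$ is empty and $\tau_*=+\infty$. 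If \eqref{ik22} is assumed to hold almost surely, then $\tau_*=+\infty$ with probability $1$; this is the point where the logical status of the random condition \eqref{ik22} has to be made precise.

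It then remains to invoke Theorem \ref{psk1} with $\tau_*=+\infty$. It furnishes a solution $z$ of \eqref{model1:1}--\eqref{model1:3} on $[0,\tau_*)=[0,\infty)$ satisfying $0<\mathcal{F}(x,t)\mathcal{G}(t)\leq z(x,t)\leq 1$ with $\mathcal{F}(x,t)=e^{\gamma\eta_2 t-A(t)-N_t}\mathcal{T}_t v_0(x)$, which is precisely \eqref{ik21b}. I would also point out that $\mathcal{G}(t)=[1-4\lambda\eta_2\zeta_M\Phi(t)]^{1/4}$ is well defined and strictly positive at every $t\geq 0$ exactly because $\Phi(t)<\frac{1}{4\lambda\eta_2\zeta_M}$, so the lower bound $\mathcal{F}(x,t)\mathcal{G}(t)$ is strictly positive on each finite time interval; since the nonlinearity $g$ is singular only at $z=0$, this uniform-in-$D$ positive lower bound is exactly the a~priori estimate that prevents quenching and lets the local mild solution of the existence proof be continued for all time. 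Consequently $\tau=+\infty$ and $z$ is global. The argument is essentially bookkeeping once Theorem \ref{psk1} is available; the only mildly delicate points are the interpretation of \eqref{ik22} (pathwise versus almost sure) and the observation that non-quenching on $[0,\infty)$ genuinely yields a global solution via the same continuation argument used in the local existence proof.
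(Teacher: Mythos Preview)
Your proof is correct and follows essentially the same approach as the paper: show that hypothesis \eqref{ik22} forces $\tau_*=+\infty$ from the definition \eqref{ik11}, and then read off the global bound \eqref{ik21b} from Theorem \ref{psk1} (equivalently, from Remark \ref{sk1}, which records $\tau_*\le\tau$). Your write-up is in fact more detailed than the paper's two-line argument, spelling out the monotonicity of $\Phi(t)$ and the well-definedness of $\mathcal{G}(t)$.
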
\label{thm3}
\begin{proof}
By \eqref{ik11}, due to  \eqref{ik22}, we obtain that $\tau_*=\infty.$ Therefore,
 the desired estimate \eqref{ik21b} is valid, since by Remark \eqref{sk1} we  deduce  that $\tau=\infty.$
\end{proof}

In the sequel we derive a sufficient condition for condition \eqref{ik22} to be satisfied. Such a condition is provided in terms of 
 the  principal eigenpair $(\mu_1, \psi_1)$ of the eigenvalue problem \eqref{eig1}--\eqref{eig2} normalized such that \eqref{qp10} holds.
 
We consider initial data $0<z_0(x)=v_0(x)\leq 1,$ such that
\begin{eqnarray}\label{ik25}
0<W_1 \psi_1(x)\leq z_0(x)=v_0(x)\leq 1 , \quad x\in D,
\end{eqnarray}
for some  constant $W_1>0$  to be specified in the sequel.
Remarkably, by virtue of Jentsch's Theorem (see \cite[Theorem V.6.6]{Sc74}), we obtain $\mathcal{T}_{t}\psi_1(x)=e^{-\mu_1 K(t)}\psi_1(x)$ for any $t\geq 0,$.

Set $\psi_m:=\inf_{x\in D} \psi_1>0$, then \eqref{ik25} yields
\begin{eqnarray}\label{ik23}
\mathcal{T}_{t}v_0(x) &&\geq W_1 \mathcal{T}_{t}\left( \psi_1(x)\right)\nonumber\\
&&= W_1 \left(e^{-\mu_1 K(t)} \psi_1(x)\right)\nonumber\\
&&\geq W_1 \psi_m e^{-\mu_1 K(t)},\quad\mbox{for any}\quad x\in D,\; t\geq 0,
\end{eqnarray}
where the lower  bound in \eqref{ik23} is independent of the spatial variable $x.$ 

Since the function $(x,t)\mapsto \mathcal{T}_{t}\phi_1(x)$ is uniformly bounded in $x,$  then \eqref{sk2} thanks to \eqref{ik23} reads
\begin{eqnarray*}
\mu(t)\geq W_1 \psi_m   e^{\gamma \eta_2 t-\mu_1 K(t)-A(t)}\quad\mbox{for any}\quad  t\geq 0,
\end{eqnarray*}
and thus condition \eqref{ik22} is satisfied provided that
\begin{eqnarray*}
\left(W_1 \psi_m \right)^{-3}\int_0^\infty e^{-3\left[\gamma \eta_2 s-\mu_1 K(s)-A(s)- N_s\right]} ds<\frac{1}{4 \lambda \eta_2 \zeta_M},
\end{eqnarray*}
or equivalently
\begin{eqnarray}\label{ik24}
\int_0^\infty e^{-3\left[\gamma \eta_2 s-\mu_1 K(s)-A(s)- N_s\right]} ds<W_2,\qquad
\end{eqnarray}
for $W_2:=\frac{\left(W_1 \psi_m \right)^{3}}{4 \lambda \eta_2 \zeta_M}.$

Therefore we deduce the following global existence result.
\begin{thm}
Under conditions  \eqref{ik25}, \eqref{ik24} for some  $W_1>0,$ Then, problem \eqref{RGLSP1}--\eqref{RGLSP3}, and thus \eqref{model1:1}- \eqref{model1:3} problem as well, has  a global in time solution  with probability $1$ (almost surely).
\end{thm}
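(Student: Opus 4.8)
The plan is to verify that the hypotheses \eqref{ik25}--\eqref{ik24} force the pathwise smallness condition \eqref{ik22}, after which the conclusion is exactly Corollary \ref{sk3}. First I would exploit the lower bound \eqref{ik25} on the initial datum: since the evolution family $\mathcal{T}_t$ (which acts as $\mathcal{T}^*_{K(t)}$ with $K$ given by \eqref{qp8}) is positivity preserving, the pointwise inequality $v_0(x)\ge W_1\psi_1(x)$ is propagated to $\mathcal{T}_t v_0(x)\ge W_1\,\mathcal{T}_t\psi_1(x)$ for every $x\in D$ and $t\ge 0$.

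Next, because $\psi_1$ is the principal Robin eigenfunction of $\mathcal{L}$ with eigenvalue $\mu_1$ (Proposition \ref{peip}), Jentsch's theorem yields the explicit diagonal action $\mathcal{T}_t\psi_1(x)=e^{-\mu_1 K(t)}\psi_1(x)$; since $\psi_1$ may be taken strictly positive, $\psi_m:=\inf_{x\in D}\psi_1>0$ and hence $\mathcal{T}_t v_0(x)\ge W_1\psi_m e^{-\mu_1 K(t)}$, a bound uniform in $x$. Feeding this into the definition \eqref{sk2} of $\mu(t)$ gives $\mu(t)\ge W_1\psi_m e^{\gamma\eta_2 t-\mu_1 K(t)-A(t)}$, so $e^{3N_r}\mu^{-3}(r)\le (W_1\psi_m)^{-3}e^{-3(\gamma\eta_2 r-\mu_1 K(r)-A(r)-N_r)}$; integrating over $[0,\infty)$ and invoking \eqref{ik24} with $W_2=(W_1\psi_m)^{3}/(4\lambda\eta_2\zeta_M)$ produces precisely $\int_0^\infty e^{3N_r}\mu^{-3}(r)\,dr<\tfrac{1}{4\lambda\eta_2\zeta_M}$, which is \eqref{ik22}. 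Corollary \ref{sk3} then delivers the global-in-time solution $z$ together with the two-sided estimate \eqref{ik21b}.

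The only non-routine ingredients are the two semigroup facts invoked above: the positivity preservation of $\mathcal{T}_t$ (so that the eigenfunction lower bound on $z_0$ survives the nonlocal diffusion) and the spectral identity $\mathcal{T}_t\psi_1=e^{-\mu_1 K(t)}\psi_1$ for the nonlocal Robin problem. Both follow from the construction recalled in Section \ref{pre} and from the relation $\mathcal{T}_t=\mathcal{T}^*_{K(t)}$, but I would state them cleanly before the computation; past that point the proof is a direct substitution, so I do not expect any genuine obstacle — the content of the theorem is really the bookkeeping that turns the abstract condition \eqref{ik22} of Corollary \ref{sk3} into the checkable condition \eqref{ik24}.
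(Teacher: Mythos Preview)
Your proposal is correct and follows exactly the paper's approach: the paper's proof is a one-line observation that conditions \eqref{ik25} and \eqref{ik24} imply \eqref{ik22}, after which Corollary \ref{sk3} applies, and the implication is precisely the chain of inequalities (positivity of $\mathcal{T}_t$, the spectral identity $\mathcal{T}_t\psi_1=e^{-\mu_1 K(t)}\psi_1$, the resulting lower bound on $\mu(t)$, and the integration leading to \eqref{ik24}) that the paper records in the paragraph immediately preceding the theorem. You have reproduced this argument in full; there is nothing to add.
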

\begin{proof}
Note that conditions \eqref{ik25}, \eqref{ik24} imply the vailidity of \eqref{ik22}, and thus the result follows from Corollaary \ref{sk3}.
\end{proof}
\begin{rem}
If $a(t)\equiv 0, b(t)\equiv 1$ and  $k(t)=$constant, for simplicity take $k=\sqrt{2},$ then 
\begin{eqnarray*}
\int_0^\infty e^{-3\left[\gamma \eta_2 s-\mu_1 K(s)-A(s)- N_s\right]} ds=\int_0^\infty e^{3\left((\mu_1-\gamma \eta_2)s+ B^H_s\right)} ds=\infty\quad a.s.,
\end{eqnarray*}
provided that $\gamma<\frac{\mu_1}{\eta_2},$ see \cite{DNMK22}, and thus \eqref{ik24} cannot be satisfied. If 
$\frac{\mu_1}{\eta_2}< \gamma,$ then \eqref{ik24} holds true when the parameter $\lambda$ is sufficiently small; alternatively, if we fix $\lambda$, then  the constant $W_1$  should be chosen  sufficiently large.

On the other hand,  when $a(t)\equiv \sqrt{2\kappa}, b(t)\equiv 0$ and  $k=\sqrt{2},$ then 
\begin{eqnarray*}
\int_0^\infty e^{-3\left[\gamma \eta_2 s-\mu_1 K(s)-A(s)- N_s\right]} ds=\int_0^\infty e^{3\left((\mu_1+\kappa-\gamma \eta_2)s+ B_s\right)} ds=\infty\quad a.s.,
\end{eqnarray*}
by choosing  $\gamma<\frac{\mu_1+\kappa}{\eta_2},$ see \cite{DKN22}, and thus \eqref{ik24} cannot be satisfied. In the complementary case $\gamma>\frac{\mu_1+\kappa}{\eta_2},$ then condition \eqref{ik24} is satisfied for either small $\la$ or for fixed $\la$ and sufficiently large $W_1.$
\end{rem}
\begin{rem}
Notably  the random variables $\tau_*$ and  $\tau^*$,  defined by \eqref{qp9} and  \eqref{ik11} respectively, provide lower and upper bounds for the quenching time $\tau$ of the solution to \eqref{model1:1}- \eqref{model1:3}.
Indeed, consider now intial data  $v_0(x)=W_1 \psi_1(x)$
for some constant $W_1>0.$ Then,  $\tau_*$ and  $\tau^*$  are expressed in terms of the same exponential function of  the stochastic process $N_t.$ Indeed, under that choice for the initial data, we have: 
\begin{eqnarray*}
\mu(t)=W_1 \psi_m   e^{\gamma \eta_2 t-\mu_1 K(t)-A(t)},
\end{eqnarray*}
and
\begin{eqnarray*}
I(0)=\langle v_0 , \psi_1 \rangle_D=W_1  \int_D \psi_1^2(x)\,dx. \end{eqnarray*}
Then, due to \eqref{ik11} and  \eqref{qp9}, we deduce that
\begin{eqnarray*}
\tau_*:=\inf\left\{t>0: \int_0^\infty e^{-3\left[\gamma \eta_2 s-\mu_1 K(s)-A(s)- N_s\right]} ds\geq \frac{W_1^3 \psi_m^3 }{4 \lambda \eta_2 \zeta_M} \right\},
\end{eqnarray*}
and
\begin{eqnarray*}
\tau^*:=\inf\left\{t>0: \int_0^\infty e^{-3\left[\gamma \eta_1 s-\mu_1 K(s)-A(s)- N_s\right]} ds\geq \frac{W_1^3  \left(\int_D \psi_1^2(x)\,dx\right)^3}{3 \lambda \eta_1 \zeta_m}\right\},
\end{eqnarray*}
which implies that $\tau_*\leq \tau^*,$  provided that 
\begin{eqnarray*}
\frac{3 \eta_1\zeta_m}{4 \eta_2 \zeta_M}\psi_m^3= \frac{3 \eta_1\zeta_m}{4 \eta_2 \zeta_M}\left(\inf_{x\in D} \psi_1(x)\right)^3\leq \left(\inf_{x\in D} \psi_1(x)\right)^3  \leq \left(\int_D \psi_1^2(x)\,dx\right)^3.
\end{eqnarray*}
The latter relation is readily seen to be always true since $\eta_1\leq \eta_2$  and $\int_D \psi_1(x)\,dx=1.$
\end{rem}
\section{Applications: MEMS models}\label{cmm}
Deterministic versions of model \eqref{model1:1} -- \eqref{model1:3} with local diffusion have been introduced the last decade to describe the  operation of
certain types of micro-electromechanical systems (MEMS), cf \cite{EGG10, KMS08, KS18, JAP-DHB02}.  MEMS devices are precision devices which integrate mechanical processes
with electrical circuits. Their size ranges from
millimetres down to microns, and involve precision mechanical
components which  can be constructed using semiconductor
manufacturing technologies \cite{KS18, JAP-DHB02,Younis}. MEMS devices are commonly employed in biomedical engineering applications, including the design of micro-scale drug delivery devices and the development of micropumps for microfluidic diagnostic tools, among others \cite{Chirkov, Nuxoll, Yager}.

The key part of such an electrostatically actuated
MEMS device usually consists of  an
elastic plate (or membrane) suspended above a rigid ground one.
Regularly the elastic plate is held
fixed at two ends while the other two edges remain free to move  (see Figure~ \ref{Figmems1}).
 \begin{figure}[h!]
  \begin{center}
\includegraphics[width=.7\textwidth]{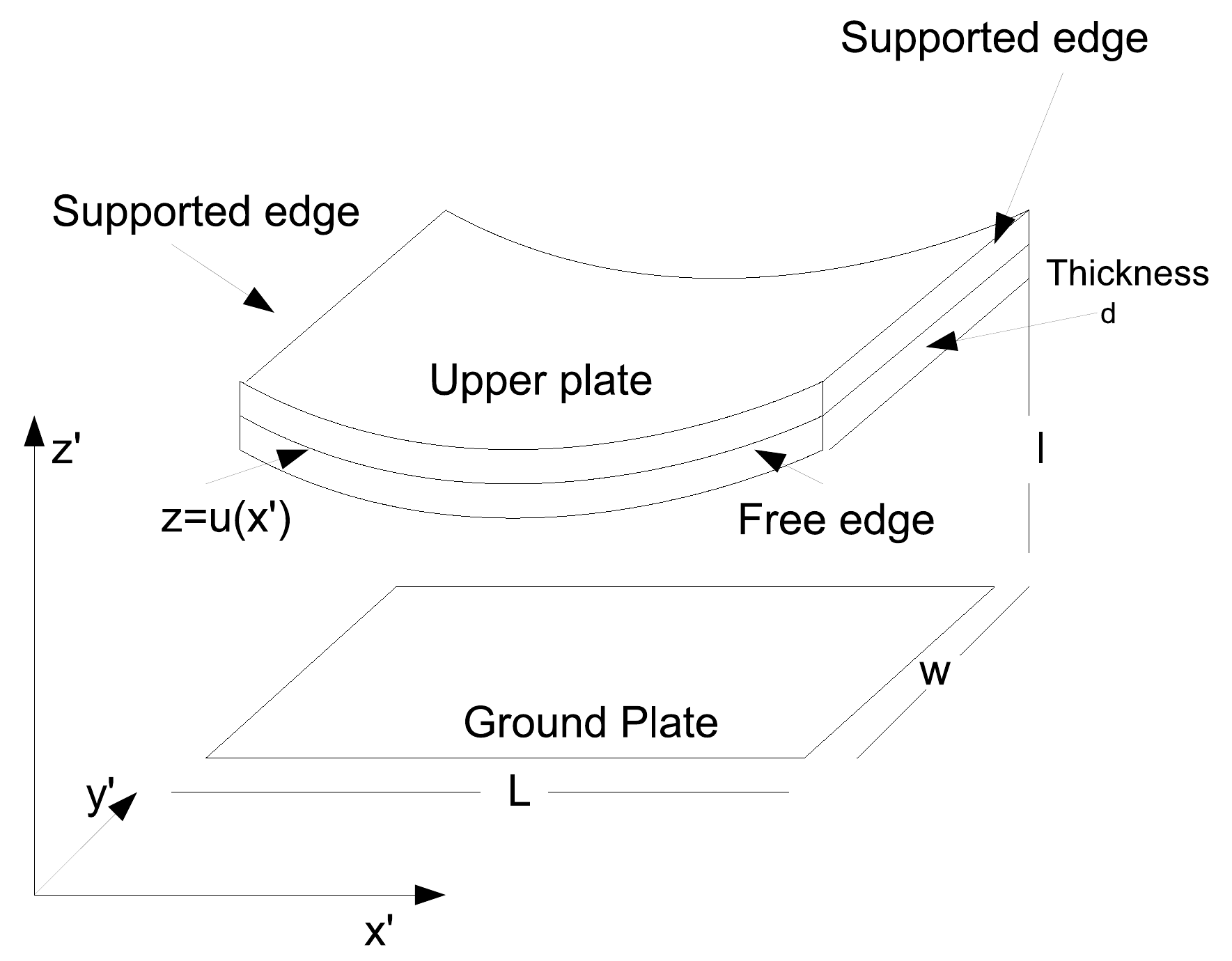}\medskip
\end{center}
  \caption{\it Schematic representation of a MEMS device }
\label{Figmems1}
\end{figure}
A potential difference
$V$  is applied  between the elastic membrane and the rigid ground plate,
leading to a deflection of the  membrane towards the plate. Considering   the width $d$ of the intermediate gap, i.e the gap between the membrane
and the bottom plate, to be small compared to the
device length $L$,  then
the deformation of the elastic membrane $u,$ after proper scaling,  is given by the dimensionless equation
\begin{eqnarray}\label{ik3a}
\frac{\partial u}{\partial t}=\Delta u +\frac{\lambda\, q(x)}{(1-u)^2},
\quad x\in D,\; t>0, 
\end{eqnarray}
see \cite{KS18, JAP-DHB02, PT01}, where the laplacian term $\Delta u$ describes to the spread of the deformation, whilst the term $(1-u)^{-2}$ arises as a consequence of the electrostatic features of the MEMS device. Here, the term  $q(x)$ describes the varying dielectric properties of the membrane and for some elastic materials can be taken to be constant;
for simplicity, henceforth  we  assume  that $q(x)\equiv 1.$ 
Furthermore, the parameter $\la$  appearing in \eqref{ik3a} equals to
$
\la=\frac{V^2 L^2 \varepsilon_0}{2\widetilde{T}\ell^3},
$
and  is  the {\it tuning} parameter of  the  device.
Note that $\widetilde{T}$ stands for the tension of the elastic membrane,
 $\ell$ is the characteristic width of the gap
between the membrane and the fixed ground plate (electrode),
whilst  $\varepsilon_0$ is the permittivity of free space. For extending further the stable operation of the MEMS device then a capacitance connected in series with MEMS is  introduced to the underlying electrical circuit.  Then we are led, via the application of Kirchoff's laws, into  versions of model \eqref{ik3a} involving nonlocal reaction terms, cf. \cite{KS18, JAP-DHB02, PT01}.

MEMS engineers are commonly  interested in identifying the conditions under which the elastic membrane can touch the rigid plate, a mechanical phenomenon  usually referred to as {\it touching down} and one that can potentially lead to the destruction of the device. Touching down can be described via model \eqref{ik3a}, and its nonlocal variations. It actually corresponds to the case when the deformation $u$  reaches the value $1;$ such a situation is known as {\it quenching (or extinction)} in the mathematical literature.

 Experimental observations (see \cite{Younis})  show a significant uncertainty regarding the values of $V$ and $\widetilde{T}.$  In that case, incorporating this uncertainty into the tunning parameter $\la$ we can obtain some a stochastic model  with a multiplicative white noise involving  \cite{DKN22, Kavallaris2016}.  In case of a MEMS device with a long-range dependence uncertainty for $V$ and $\widetilde{T}$ it seems reasonable to consider a stochastic model with multiplicative fractional noise with Hurst index $1/2<H<1,$ cf. \cite{DNMK22}.  

A more complex configuration of the MEMS device described in Figure\ref{Figmems1}  incorporates the case where two edges of the membrane are attached to a pair of torsional and translational springs, modeling a flexible, non-ideal support, see \cite{DKN19, Younis}. In that case homogeneous Robin boundary conditions should be assigned to equation \eqref{ik3a}, cf.  \cite{DKN19, DNMK22}.

Now consider the case we would also like to model nonlocal effects of the elastic membrane,  maybe due to material discontinuities. In that case the spread of the membrane's deflection should be modeled, into equation \eqref{ik3a}, by a nonlocal diffusion operator $\mathcal{L}$ of the form \eqref{nop}, with a possibly time-dependent diffusion coefficient, incorporating the natural treatment of balance laws on and off material discontinuities, cf. \cite{DDGL17, Du19}.
To integrate any possible material anomalies on the boundary a nonlocal Robin boundary condition of the form \eqref{model1:2} should be assigned to the underlying nonlocal equation. A rather simple choice is to take  $\mathcal{L}=(-\Delta)^{\alpha}, \, 0<\alpha<1,$ whilst the nonlocal operator $\mathcal{N}$ given by \eqref{bnop}, should have kernel of the form $|x-y|^{-d-2 \alpha}.$ Finally, in order to include elastic membranes could  generate long-range and short-range  voltage and tension fluctuations (occuring as clustering)  we could embody a noise of the form \eqref{cnt}.

For such a model with extended Robin conditions the analysis and the analytical results and estimates  of sections \ref{lex}-\ref{gexlb} give an insight about the quenching behaviour of the model. For the case of the extended Dirichlet conditions, where no similar  analytical results are available,  we present a preliminary numerical treatment of the problen in the following section \ref{nss}.

\section{Numerical solution}\label{nss}
 In the current section we deliver a numerical study of the following problem 
  \begin{eqnarray}
&&du =\left(-(-\Delta)^{\alpha} u +\frac{\lambda}{\left(1-u\right)^2 }\right)dt+ \pi(u) dN_t,
 \quad x\in D,\; t>0, \; 0<\alpha<1, \quad \label{NLSP1}\\
&& u(x,t)=0, \;  x\in D^c,\; t>0,\label{NLSP2}\\
&& 0\leq u(x,0)=u_0(x)<1, \quad x \in D,\label{NLSP3}
\end{eqnarray} 
for  the one-dimensional   case $d=1.$  The considered  time-dependent noise is a combination of standard and  fractional Brownian motions. This case corresponds to a simpler version of the more general problem studied in the previous sections and it is closely related to the MEMS application described in the preceding section.
In relation to problem (\ref{model1:1})-(\ref{model1:3}), here we consider,  $\mathcal{L}=(-\Delta)^{\alpha}, \, 0<\alpha<1,$, $u=1-z,$ $D=[-1,1],$ $k(t)=2$, $\gamma=0$, $\zeta(x)=1$ and extended Dirichlet boundary conditions. 
  
The  considered noise term is of a multiplicative  form, that is  $\pi(u)\,d N_t$ for
 $\pi(u)= (1-u)$. In particular we take the noise being of the form  $\pi(u)\,d N_t=\kappa_1(1-u)\,d B_t+\kappa_2(1-u)\,d B^H_t $ for some positive constants $\kappa_1, \kappa_2;$ that is a mixture of the standard and fractional Brownian motion.
  
Here we focus in  the case of homogeneous Dirichlet  conditions in the complement of the interval  $[-1,\,1].$ 
 Note that homogeneous Dirichlet boundary condition $u=0$  in $[-1,\,1]^c$ corresponds in having $z=1$ for  $x\in [-1,\,1]^c ,$ and  this case is not  actually  covered by the  analysis in the previous sections.

 The consideration of extended Dirichlet boundary conditions is important, since those conditions 
 are quite relevant to the MEMS application 
 (see for example ~\cite{EGG10, KS18, JAP-DHB02}) considered in the previous section. On the other hand,   since as for the case of the standard Laplacian operator the analytical methods for estimating the quenching probability are more delicate (see \cite{DKN22, DNMK22}) an initial numerical study of the Dirichlet problem is a valuable contribution.  So we attempt a numerical study of the conisdered stochastic nonlocal model in that case. We intend to derive an initial estimation of the dynamics of our nonlocal stochastic problem under the infuence of extended Dirichlet boundary conditions. 

\subsection{Finite Differences  approximation}
In order to approximate numerically the problem \eqref{NLSP1}-\eqref{NLSP3}
   we proceed with a finite difference semi--implicit Euler in time  scheme, cf. \cite{Duo18, Lord}.

We set a discretization in $[0,T]\times [-1,1]$, $0\leq t\leq T$, $0\leq x\leq 1$ with
$t_n=n\delta t$, $\delta t=\left[{T}/{N}\right]$  for $N$ the number of time steps and we also introduce the grid points in $[-1,1]$, $x_j = -1+j\delta x$, for $\delta x = 2/M$ and $j = 0,1,\ldots, M$.

 Then, initially we apply  a finite differences approximation for the   fractional Laplacian term $(-\Delta)^{\alpha} u$ in  (\ref{NLSP1}). We follow the finite differences approximation approach  given in  \cite{Duo18}.
 
Following this approach we obtain 
$(-\Delta)^{\alpha}_{\rho} u \simeq Au$, for $A$ an $(M-1)\times (M-1)$ matrix having the form: 

\begin{eqnarray}
A_{i,j}=C_{1,\alpha}\left\{\begin{array}{ccc}
\sum_{k=2}^{M-1}\frac{(k+1)^\chi-(k-1)^\chi}{k^\rho} 
+(\frac{(M+1)^\chi-(M-1)^\chi}{M^{\rho}})
+\left(2^\chi +\kappa_\rho -1\right) + \frac{\chi}{\alpha M^{2\alpha}}, &&  i=j,\\  {}&&{} \\
-\frac{\left(|j-1|+1\right)^\chi-\left(|j-1|-1\right)^\chi}{2 |j-1|^\rho}, &&  j \neq i,\, i\pm 1,\,\,\\ {}&& {}\\
-\frac12\left(2^\chi +\kappa_\rho -1\right),  &&  j = i\pm 1,
\end{array} \right.
\end{eqnarray}
where here we set  $\chi=\rho-2\alpha$,  $\kappa_\rho=1+2 \alpha$ for $2\alpha\in (1,2)$ and $\kappa_\rho=1$ for $2\alpha=1$. Also the parameter $\rho\in (2\alpha, 2]$ and in our simulations we take  $\rho=1+\alpha$. For more details about the derivation of the scheme and the choices of the various parameters we refer to \cite{Duo18}.

We then apply a semi-implicit Euler method in time. We denote by
  $U_h^n=(u_0^n,u_1^n,\ldots, u_M^n)$ the finite difference approximation of $u(x,t)$, i.e. $u(x,t^n)\simeq U_h^n$, and by $u_h^n=(u_1^n,\ldots, u_{M-1}^n)$ since due to Dirichlet boundary conditions $u_0^n=u_M^n=0$.
  
By a standard discretization in time we obtain 
\bgee  
d u(t_{n})\simeq \frac{\left(u_h^{n+1}- u_h^{n}\right)}{\delta t}= A u_h^{n+1} +  g\left(u_h^n\right)+\pi\left(u_h^n\right)\left(\kappa_1 b_s(t_n)+\kappa_2 b_s^H(t_n)\right),
\egee
   or
\bgee
\left( I-\delta t A\right)u_h^{n+1} =u_h^{n}+ \delta t\, g\left(u_h^n)\right)
   +\delta t \, B_h(t_n),
\egee
  where we denote  by $B_h$  $\in \mathbb{R}^{M-1}$ the vector 
  $B=\pi\left(u_h^n\right)\left(\kappa_1 b_s(t_n)+\kappa_2 b_s^H(t_n)\right)$.
  Also $b_s(t)=( \xi(t_{n+1})-\xi(t_{n}))/\sqrt{\delta t}$ where $b_s(t_n)\sim N(0,1)$ are i.i.d. random variables for i.i.d.  standard Brownian motions $\xi(t)$.
  Similarly we sample the fractional Brownian motion  by considering 
   i.i.d.  fractional Brownian motions $\xi^H(t)$. The latter are sampled by the circulant embedding method with a standard routine (e.g. see \cite{Lord}, Chapter 6). 
   
 \subsection{Simulations}

 Initially, we present a realization of the numerical solution of problem \eqref{NLSP1}-\eqref{NLSP3} in Figure \ref{Fig_sim1}(a) for $\lambda=0.1$, $\kappa_1,\kappa_2=0.1$,     initial condition $u(x,0)=c\,(1-x^2)$ for $c=0.1$ and extended homogeneous Dirichlet boundary conditions. 
 We also consider the nonlocal diffusion exponent $\alpha=0.6$ and take Hurst index $H=0.6$. By this performed realization the occurrence of  quenching is evident. 

 For a different set of five realizations but for the same parameters in Figure \ref{Fig_sim1}(b) the maximum of the solution at each time step is plotted and again a similar quenching behaviour is observed.
\begin{figure}[!htb]\vspace{-4cm}\hspace{-1cm}
   \begin{minipage}{0.48\textwidth}
     \centering
     \includegraphics[width=1.2\linewidth]{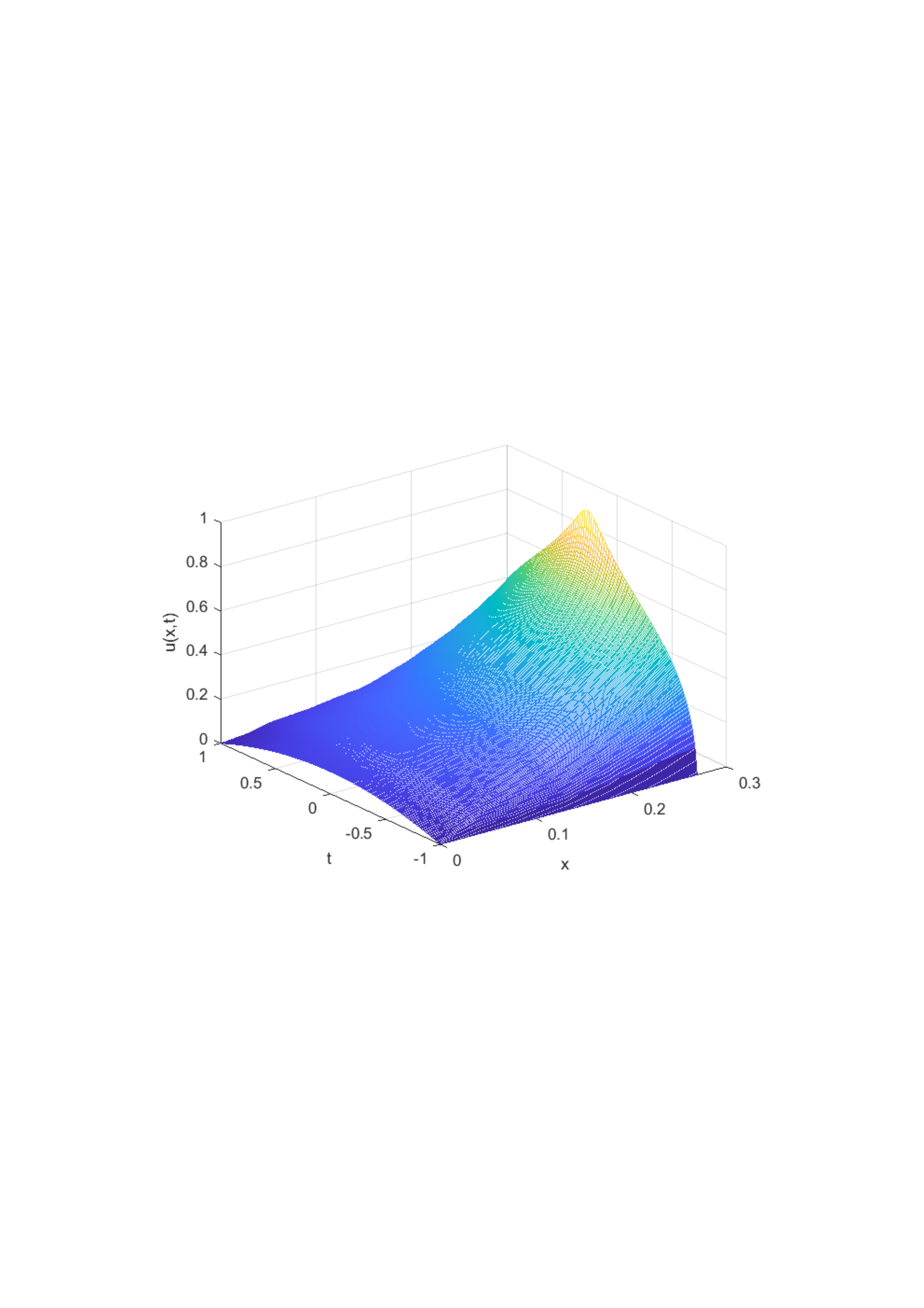}
   \end{minipage}\hfill
   \begin{minipage}{0.48\textwidth}\hspace{-2cm}
     \centering
     \includegraphics[width=1.2\linewidth]{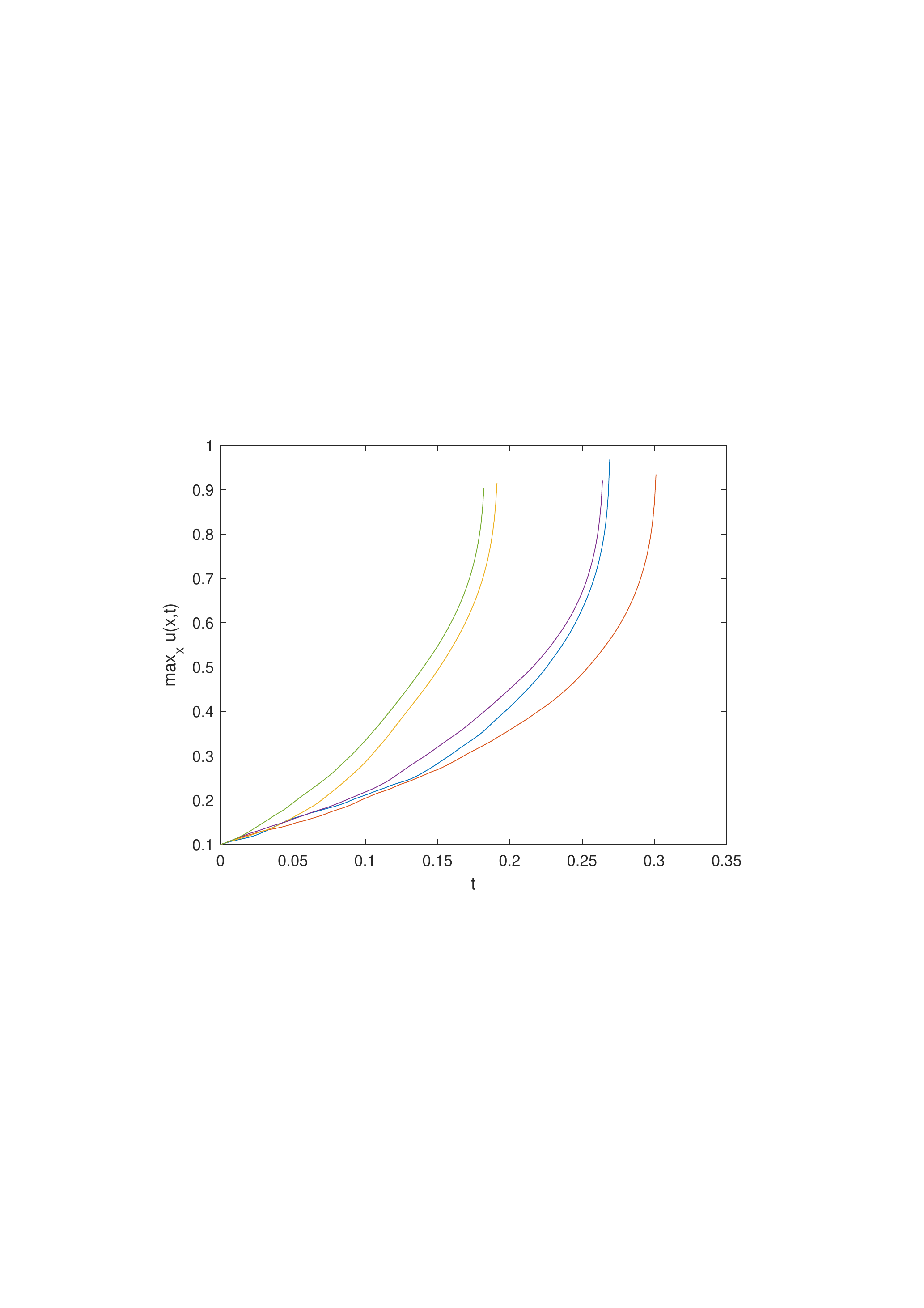}
   \end{minipage}\vspace{-4cm}
   \caption{(a) Realisation of the numerical solution of problem \eqref{NLSP1}-\eqref{NLSP3} for $\lambda=0.1$, $\kappa_1=\kappa_2=0.1$, $M=101$,  $N=10^4$,    and initial condition $u(x,0)=c\,(1-x^2)$ for $c=0.1$.
   (b) Plot of $\|u(\cdot,t) \|_\infty$ from a different set of five realizations but with the same    parameter values.}\label{Fig_sim1}
\end{figure}
  An interesting aspect worth investigating is
 the derivation of  estimates  of the quenching probability  in a specific
time interval $[0, T]$ for some $T>0.$

We know, see \cite{KMS08, KS18}, that  for the coresponding deterministic  problem with the  Laplacian operator, that is  \eqref{NLSP1}-\eqref{NLSP3} with $\alpha=1$ and $\pi(u)\equiv 0$ and imposed Dirichlet boundary conditions, then the  solution $u$ will eventually quench in some finite time $T_q$ for large enough values of the parameter $\lambda$ or big enough initial data.

From the application  point of view, an estimate of the probability that $T_q<T$ with respect to various values of the parameter $\lambda$  would be desirable. In Table $(T1)$ the results of such a numerical experiment are presented. In particular, implementing  $N_R$ realizations,  in the first column we print out  the values of the parameter $\lambda$  considered.  The second column contains the number of times that the solution quenched before the specified simulation time $T$ over the number of realizations which provides the estimation of the quencing probability.  Additionally,  in the last two columns the mean $m(T_q)$ and the variance $Var (T_q)$ of the quenching time respectively are given.
 The quenching time $T_q$ numerically is approximated as $T_q\simeq t_m$ for $t_m$ the  maximum time step for which the condition $\max_j\left(u(x_j,t_m)\right)\leq 1-\varepsilon$ holds
     for a predefined small number  $\varepsilon$. In  the  simulations of this section $\varepsilon$  is taken to be the machine tolerance i.e. $\varepsilon=2.2204\, 10^{-16}$. 
The rest of the parameters were taken to be the same as in the previous simulations but with 
$H=0.7$, $\alpha=0.6$, $\kappa_1=\kappa_2=0.1$, $M=41$.

\begin{center}\label{Table1}
{\bf Table (T1)}\\Realizations of the numerical solution of problem \eqref{NLSP1}-\eqref{NLSP3} \\
for $N_R=10000$ in the time interval $[-1,\,1].$ \\
\bigskip
 \begin{tabular}{||c||c|c|c||}\hline\label{T1}
    $\lambda $      &        Quenching  Probability      & $m(T_q)$  &  $\sigma^2 (T_q) $\\
      \hline
        0.01   &  0 & - & -\\   
  0.2   &  0.1802 & 0.8285 & 0.0124\\   
 0.4 &    0.5141 & 0.6953 & 0.0200 \\   
0.6 &  0.8021 & 0.5482  & 0.0182  \\   
0.8  &  0.9542 &  0.4145 & 0.0091  \\
1   & 0.9953 &0.3188  & 0.0026 \\
1.2   & 0.9997 &0.2583  &  6.5095e-04 \\
1.4   &1.0000 & 0.2192  & 2.2777e-04\\
 \hline
\end{tabular}
\end{center}
 \bigskip
 
From the results presented in Table $(T1)$ we deduce that we have a behaviour of the problem resembling the deterministic 
case. Increasing $\lambda$ results in a corresponding increase of the quenching events while the estimated quenching time decreases. Finally for large enough value of the parameter $\lambda$ we have quenching with estimated probability one while for very small $\lambda=0.01$ we have no quenching events in the interval $[0,T]$.

Next we proceed with an investigation of the effect of the  regularizing term $\gamma(1-u)$ on the quenching behaviour.  We solve numerically the problem (\eqref{NLSP1}-\eqref{NLSP3}) using the same set of parameters as for the experiments in Table $(T1)$ but with the source term having now the form  $\frac{\lambda}{\left(1-u\right)^2 } -\gamma(1-u)$, with $\gamma=0.1$. The results are demonstrated in Table $(T2).$

 \bigskip
  \begin{center}\label{Table2}
{\bf Table (T2)}\\Realizations of the numerical solution of problem \eqref{NLSP1}-\eqref{NLSP3} with the addition of the regularizing term $\gamma(1-u)$. \\
for $N_R=10000$ and $\gamma=0.1$, in the time interval $[-1,1].$ \\
\bigskip
 \begin{tabular}{||c||c|c|c||}\hline\label{T2}
    $\lambda $      &        Quenching  Probability       & $m(T_q)$  &  $\sigma^2 (T_q) $\\
      \hline
       0.01   &  0 & - & -\\   
  0.2   &0.1470   & 0.8388  &  0.0112 \\   
 0.4 &  0.4614   &0.7154  & 0.0198  \\  
0.6 & 0.7473   &0.5729   &0.0192   \\    
0.8  & 0.9274  & 0.4386   &0.0113   \\ 
1   & 0.9907  &0.3350   &  0.0038  \\ 
1.2   &0.9998  &0.2683   & 8.5834e-04 \\ 
1.4   &1.0000  &0.2259   & 2.9059e-04 \\ 
 \hline
\end{tabular}
\end{center}
\bigskip

Indeed comparing these results with those of  Table $(T1)$ we observe that the probability of quenching decreases due to the addition of the regularizing term. Also the mean value of the quenching time is larger with slightly smaller variation.
This is a result combatible with those obtained analyticaly for the extended Robin conditions case.


In the next set of experiments in Table $(T3)$, and in the rest of this section, we drop the regularizing term i.e. we set $\gamma=0$ and  focus on the effect of the fractional Brownian noise on the quenching behaviour of the problem. Namely for a fixed value of $\lambda=0.4$ and of $\kappa_1=0.1$ we vary the coefficient $\kappa_2$ expressing the intensity of the  fractional Brownian term. The rest of the parameters are kept the same as in the previous simulations.
 \bigskip
  \begin{center}\label{Table3}
{\bf  Table (T3)}\\Realizations of the numerical solution of problem \eqref{NLSP1}-\eqref{NLSP3} in the case of varying fractional Brownian noise intensity  for $N_R=10000$ in the time interval $[-1,\,1].$ \bigskip \\
 \begin{tabular}{||c||c|c|c||}
 \hline\label{T3}
    $\kappa_2 $      &        Quenching   Probability      & $m(T_q)$  &  $\sigma^2 (T_q) $\\
  \hline
     0.05  & 0.5191 &0.7179 & 0.0177 \\
0.1 &  0.5205 & 0.6961 & 0.0196\\
0.5 &     0.5341    	&   0.5244 &  0.0285 \\  
1 &      0.5575 	&    0.4453  & 0.0321\\ 
1.5 &      0.5702 	&    0.3989  &  0.0344\\ 
2 &      0.5799	&   0.3718  & 0.0354\\ 
 \hline
\end{tabular}
\end{center}
 \bigskip

We observe that by increasing the coefficient $\kappa_2$ we have a tendency to obtain more quenching events  and consequently a decreased average quenching time $T_q$. Such a result  should be expected due to the long-range dependence is exhibited by the fractional Brownian motion for $H=0.7>1/2.$ 

Moreover in the next set of graphs, see Figure \ref{Fig_sim2},  we investigate the effect of  the Hurst  index  $H$ and the order $\alpha$ of the fractional Laplacian operator upon the  quenching probability and the quenching time. We set $H\in [1/2,\, 1]$ and $\alpha \in [0.1,\,0.9]$ and a partition in these intervals with step $\delta_s=0.05$, while for the rest of the parameters we have $\lambda=0.4$, $\kappa_1=\kappa_2=0.1$ and  $M=41$. The number of realisations for each choise of $(\alpha_\ell, H_k)$ where $\alpha_\ell=0.5+\delta_s \ell$, $\ell=0\ldots 10 $, $H_k=0.1+\delta_s k$, $k=0\ldots 16 $, and again  $N_R=10000$.
\begin{figure}[!htb]\vspace{-4cm}\hspace{-1cm}
   \begin{minipage}{0.48\textwidth}
     \centering
     \includegraphics[width=1.2\linewidth]{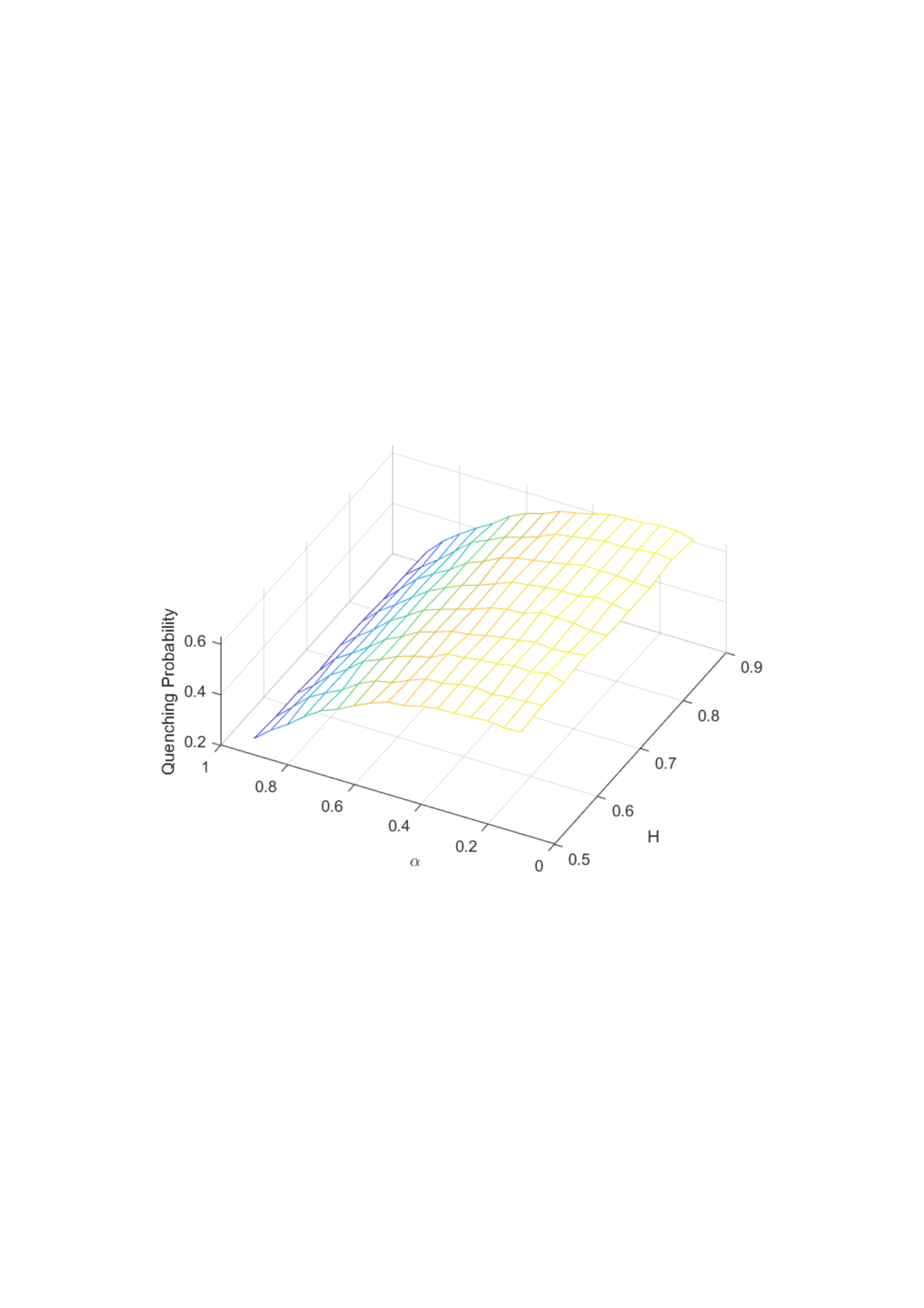}
   \end{minipage}\hfill
   \begin{minipage}{0.48\textwidth}\hspace{-2cm}
     \centering
     \includegraphics[width=1.2\linewidth]{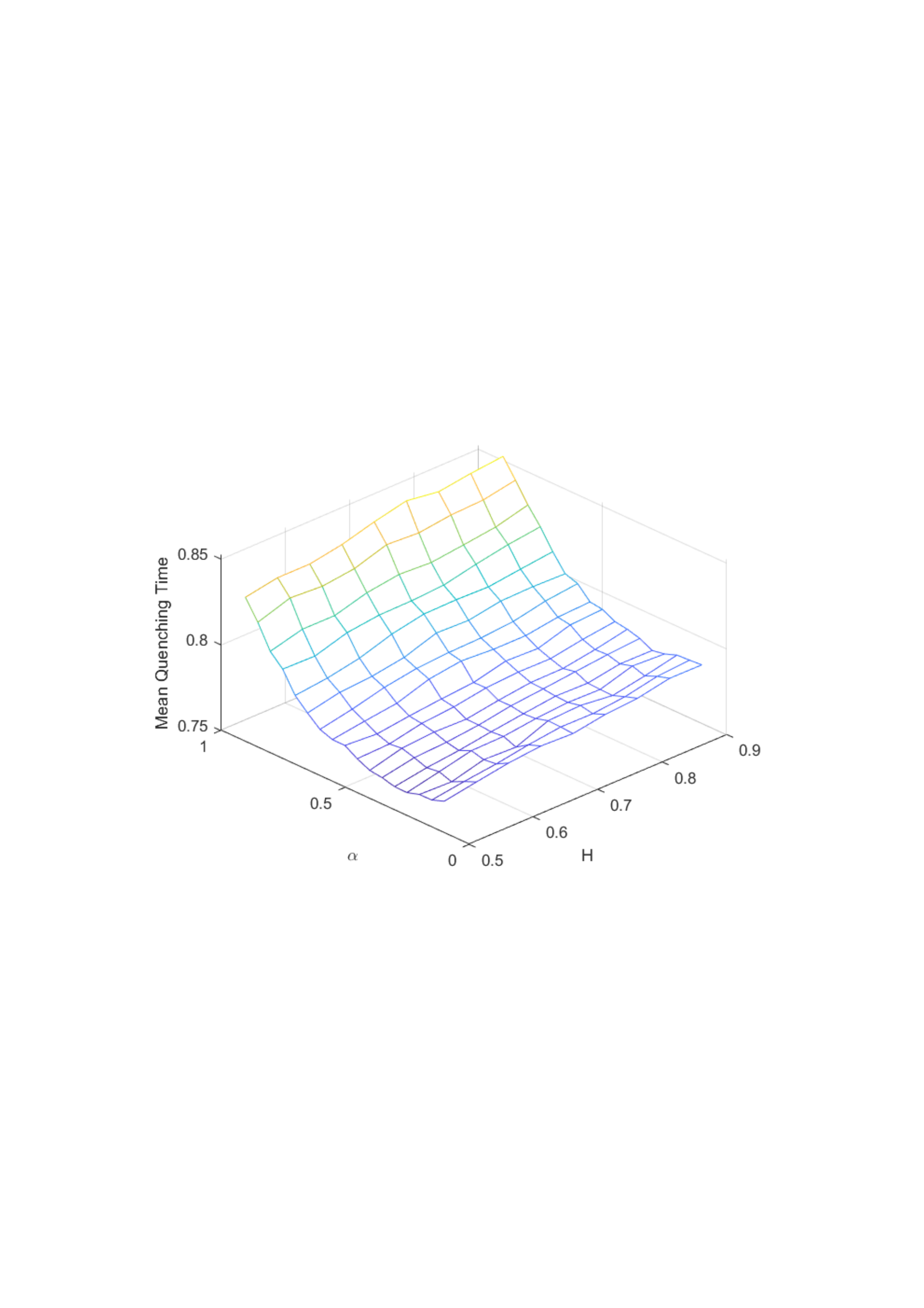}
   \end{minipage}\vspace{-4cm}
   \caption{(a) Graph of the quenching probability in the time interval $[-1,\, 1]$ of problem \eqref{NLSP1}-\eqref{NLSP3} with respect to the parameters $\alpha$ and $H$,  for $\lambda=0.4$, $\kappa_1=\kappa_2=0.5$, $M=41$,  $N=10^4$,    and initial condition $u(x,0)=c\,(1-x^2)$ for $c=0.1$. 
   (b) Plot of mean value of the quenching time $T_q$ with respect  to the parameters $\alpha$ and $H$ for the same set of experiments.}\label{Fig_sim2}
\end{figure}

 Figure \ref{Fig_sim2}(a) shows that the quenching probability decreases as the Hurst index $H$ increases from $\frac12$ to $1,$ while there is no significant variation of the probability with respect to the order of the fractional Laplacian $\alpha.$ In Figure \ref{Fig_sim2}(b) we observe that the mean quenching time increases as both the Hurst index $H$ and the order of the fractional Laplacian $\alpha$ increase.

\section{Discussion}
 In the current work, we investigated the behaviour of certain stochastic partial differential equations, including nonlocal diffusion operators assigned to nonlocal Robin conditions.  The dynamics were driven by a mixture of a Wiener process and fractional Brownian motion with Hurst index $H>1/2.$ The considered model can describe the operation of MEMS devices embeded to materials with  discontinuities. This model can exibit the phenomenon of {\it finite time quenching}, which is closely related to the mechanical phenomenon of {\it touching down} in MEMS devices. Similar models, related also to MEMS systems, driven by either  single Brownian or  fractional Brownian motions have been investigated in \cite{DKN22, DNMK22, Kavallaris2016}. However, to the best of our knowledge, this is the first time in the literature that the quenching behaviour of such nonlocal  model with mixed noises is investigated.

Our theoretical analysis enabled us to provide upper bounds for the {\it quenching probability} and for the {\it quenching time}. To this end, we employed  estimates of perpetual integral functionals of Brownian motion as wells as  recently obtained tale estimates  of fractional Brownian motion. Moreover, 
we were able to estimate the probability of global existence, and as a by-product we also derived lower estimates of the quenching time.
The produced analytical results exhibit the strong impact of the noise to the long time dynamics of the model. In particular, as alluded to in Theorem \ref{cvn3}, the form of the nonlinear term  in problem \eqref{model1:1} -- \eqref{model1:3} forces the solution towards quenching almost surely. This is in stark contrast with the dynamics of the corresponding deterministic local diffusion system, cf.  \cite{KMS08,  KS18}. A key auxiliary result for our analytical approach is the positivity of the principal Robin eigenpair of the considered nonlocal operator. It is worth emphasizing, that since such a result was not available in the literature we had to prove it, see Appendix.

A complementary numerical study of the underlying nonlocal model has been employed for the case of fractional Laplacian $\left(-\Delta\right)^{\alpha}, \; 0<\alpha<1,$ and for homogeneous extended  Dirichlet boundary conditions. The presented numerical experiments  shed light to the impact of the mixed noise  to the dynamics of the model. More interestingly, the numerical simulations illuminate how the dispersal coefficient $0<\alpha<1$ affects the quenching behaviour of the underlying model, 
an effect cannot be observed by means of an analytical approach. 
\section{Appendix}\label{apx}



\begin{proof}[Proof of Proposition \ref{peip}]
The proof (that follows the strategy of \cite{Ev}) can be broken into the following steps:

1. We first now that  by the Green-Gauss formula, for any $u,v$ it follows that
\begin{eqnarray*}
Q_{\beta}(u,v)&&={\mathcal E}(u,v) + \int_{D^{c}} \beta u(y)v(y) dy \\
&&= \int_{D} {\mathcal L} u(x) v(x) dx + \int_{D^{c}} {\mathcal N} u(y) v(y) dy + \int_{D^{c}} \beta(y) u(y)v(y) dy  \\
&&=\int_{D} {\mathcal L} u(x) v(x) dx,
\end{eqnarray*}
as long as $u$ satisfies the nonlocal Robin-type boundary conditions.

Hence, for any $u$ satisfying Robin-type boundary conditions
\begin{eqnarray}\label{C}
Q_{\beta}(u,v)= \int_{D} {\mathcal L}u(x) v(x) dx = ( {\mathcal L}u, v)_{L^{2}(D)}.
\end{eqnarray}

2. We now take any $u \in L^{2}(D)$, such that $\| u \|_{L^{2}(D)}=1$. Since the set of Robin eigenfunctions for ${\mathcal L}$ forms an orthonormal basis (see Theorem 4.21 in \cite{foghem2022general}) we expect an expansion for $u$ of the form $u= \sum_{k \in {\N}} d_{k} \psi_{k}$, with $d_{k}=(u, \psi_{k})_{L^{2}(D)}$, and the expansion converging in $L^{2}(D)$ and $\sum_{k \in \N} d_{k}^2 =1$.

Moreover, for any two eigenfunctions $\psi_{k}$, $\psi_{n}$, we have (using \eqref{C}) that
\begin{eqnarray}\label{D}
Q_{\beta}(\psi_{k},\psi_{n}) = (\mathcal{L}\psi_{k}, \psi_{n})_{L^2(D)} = \mu_{k} (\psi_{k}, \psi_{n})_{L^2(D)} = \mu_{k} \delta_{k,n},
\end{eqnarray}
where we also used the orthonormality of $\{ \psi_n\}$ in $L^{2}(D)$.

Using arguments related to the Lax-Milgram lemma, we can consider the bilinear form $Q_{\beta}$ as forming an inner product $( u, v)_{\beta}:=Q_{\beta}(u,v)$ on the fractional Sobolev space  $V_{\nu}(D \mid \R^d)$, which will subsequently be denoted by $H_{\nu}^{1}(D)$ to indicate the resemblance with the standard Sobolev space used for the local problem.
  To this end, motivated by \eqref{D}, we note that the set $\{ \psi_k \}:=\{ \psi_k/ \mu_{k}^{1/2}\}$ forms a basis for $H^{1}_{\nu}(D).$  Indeed, \eqref{D} shows that $\{\psi_k\}$ is orthonormal with respect to this new inner product.
To show that this is a basis for $H^{1}_{\nu}(D)$ it suffices to show that  $(\psi_{k}, v )_{\beta}=0$ for all $k$ implies that $v=0$.   To see this we simply have to use \eqref{C}, to observe that
\begin{eqnarray*}
0=(\psi_{k}, v)_{\beta} = Q_{\beta}(\psi_{k}/\mu_{k}^{1/2}, v) = \mu_{k}^{1/2} ( \psi_{k}, v )_{L^2(D)}, \,\,\, \forall \,\, k \in \N,
\end{eqnarray*} 
and by the fact that $\{ \psi_k \}$ is an orthonormal basis for $L^2(D)$, we see that $v=0$ as required. 

Hence, $\{ \psi_{k} \} = \{ \psi_k/ \mu_{k}^{1/2}\}$ is an orthonormal  basis for the Hilbert space $H^1_{\nu}$, endowed with the inner product $( \cdot,  \cdot )_{\beta} = Q_{\beta}(\cdot, \cdot)$. This implies that for any $u \in H^1_{\nu}(D)$,  the series $u= \sum_{k \in N} c_{k} \psi_{k} = \sum_{k \in \N} c_{k} \psi_{k}/\mu_{k}^{1/2}$, for $c_{k}=(u, \psi_{k})_{\beta} = Q_{\beta}(u, \psi_{k}/\mu_{k}^{1/2})$ converges in $H^{1}_{\nu}(D)$. We easily see that $ Q_{\beta}(u, \psi_{k}/\mu_{k}^{1/2}) = (u,\psi_{k})_{L^{2}(D)}$. By the observation the $ H^{1}_{\nu}(D) \hookrightarrow L^2(D)$, so that $u$ also admits an expansion $u = \sum_{k \in \N} d_{k} \psi_{k}$, $d_{k}=(u, \psi_{k})_{L^2(D)}$ in $L^{2}(D)$, and upon direct comparison with the corresponding expansion for $u$ in $H^{1}_{\nu}$, we see that the two expansions coincide.

3. We are now ready to show the variational  formula \eqref{Va}.

Consider any $u \in H^{1}_{\nu}(D) \hookrightarrow L^2(D)$, such that $\| u \|_{L^2(D)}=1$. By the results of step 2, it admits an expansion of the form $u= \sum_{k \in \N} d_{k} \psi_{k}$, with $d_{k}=(u, \psi_{k})_{L^2(D)}$ and $\sum_{k \in \N} d_{k}^2 =1$.  We now calculate $Q_{\beta}(u,u)$ using this expansion and the bilinearity of $Q_{\beta}$:
\begin{eqnarray*}
Q_{\beta}(u,u)=Q_{\beta}\left(\sum_{k \in \N} d_{k} \psi_{k}, \sum_{k' \in \N} d_{k'} \psi_{k'}\right) = \sum_{(k,k') \in \N \times \N} d_{k} d_{k'} Q_{\beta}(\psi_{k},\psi_{k'}) \\
\stackrel{(\ref{D})}{=}   \sum_{(k,k') \in \N \times \N} \mu_{k} d_{k} d_{k'}  \delta_{k,k'} = \sum_{k \in \N} d_{k}^2 \mu_{k} \ge \mu_1 ( \sum_{k \in \N} d_{k}^2 ) = \mu_1,
\end{eqnarray*}
where we used the fact that $\sum_{k \in \N} d_k^2=1$ and $0 < \mu_1 \le \mu_2 \le \cdots $. 

Hence, for any $u \in H^{1}_{\nu}(D)$ such that $\| u\|_{L^2(D)}=1$ it holds that $Q_{\beta}(u,u) \ge \mu_1$. Choosing, $u=\psi_1$, the first Robin eigenfunction we see that $Q_{\beta}(u,u)=Q_{\beta}(\psi_1,\psi_1)=\mu_1$. Hence, \eqref{Va} is proven.

4. We now show that  for any $u \in H^1_{\nu}(D)$ such that $\|u \|_{L^2(D)}=1$ the following are equivalent:
\bge
&&{\mathcal L} u = \mu_1 u\label{E1} \\
&&{\mathcal N} u + \beta u =0\label{E1A}
\ege
and
\begin{equation}\label{E2}
Q_{\beta}(u,u)=\mu_1
\end{equation}

Clearly \eqref{E1}--\eqref{E1A} implies \eqref{E2} by a simple application of the Green-Gauss formula. It thus remains to prove the reverse implication. 

Suppose that \eqref{E2} holds for some $u \in H^1_{\nu}(D)$ such that $\|u \|_{L^2(D)}=1$. By step 2, $u$ admits an expansion of the form $u =\sum_{k \in \N} d_{k} \psi_{k}$ with $d_{k}=( u, \psi_{k})$, and $\sum_{k \in \N} d_{k}^2=1$.   Moreover, by  the bilinearity of $Q_{\beta}$, and the above expansion we have that (essentially as in step 3) that
\begin{eqnarray}\label{F}
Q_{\beta}(u,u)=\sum_{k\in \N} d_{k}^2 \mu_{k}.
\end{eqnarray}

Note that
\begin{equation}
\begin{aligned}
\sum_{k \in \N} d_{k}^2 \mu_1 = \left( \sum_{k\in \N} d_{k}^2 \right) \mu_1 \stackrel{ \| u\|_{L^2(D)}=1}{=} \mu_{1} \stackrel{\eqref{E2}}{=} Q_{\beta}(u,u)  \stackrel{\eqref{F}}{=} \sum_{k\in \N} d_{k}^2 \mu_{k},
\end{aligned}
\end{equation}
which upon rearrangement yields,
\begin{eqnarray}
\sum_{k \in \N} d_{k}^2 (\mu_k - \mu_1) =0,
\end{eqnarray}
and in return (recalling the fact that $0 < \mu_1 \le \mu_2 \le \cdots $) yields
\begin{eqnarray}
d_{k}=(u, \psi_{k})_{L^2(D)} =0, \,\,\, \mu_k > \mu_1.
\end{eqnarray}
Combining that with the expansion for $u$ we see that the only contribution on $u$ comes from projections of $u$ on the solutions of the Robin eigenvalue problem, for $\mu_1$.  

There is no guarantee for uniqueness (up to constant multiplications) for the solution of problem ${\mathcal L} u = \mu_1 u$, subject to Robin boundary conditions. However, using results from the Fredholm theory (for the inverse of $\mathcal L$) we know that $\mu_1$ has finite multiplicity, hence there exists $m$ solutions of problem $\mathcal{L} u = \mu_1 u$, subject to Robin boundary conditions, $\{ \psi_{1}^{(1)}, \cdots, \psi_{1}^{(m)}\}$.   

Taking into account that $d_{k}=(u, \psi_{k})_{L^2(D)}=0$ for all $k >1$ and the finite multiplicity of the first eigenfunction we conclude that $u= \sum_{\ell=1}^{m} (u, \psi_{1}^{(\ell)} )_{L^2(D)} \psi_{1}^{(\ell)}$ , with $\mathcal L \psi_{1}^{(\ell)} = \mu_1 \psi_{1}^{(\ell)}$, $\ell =1, \cdots, m$, with Robin boundary conditions. Then, 
\begin{eqnarray*}
{\mathcal L} u &&= {\mathcal L}\left(\sum_{\ell=1}^{m} (u, \psi_{1}^{(\ell)} )_{L^2(D)} \psi_{1}^{(\ell)}\right) = \sum_{\ell=1}^{m} (u, \psi_{1}^{(\ell)} )_{L^2(D)} {\mathcal L} \psi_{1}^{(\ell)} \\&&= \mu_1 \left(\sum_{\ell=1}^{m} (u, \psi_{1}^{(\ell)} )_{L^2(D)} \psi_{1}^{(\ell)}\right) =\mu_1 u,
\end{eqnarray*}
and with a similar calculation ${\mathcal N} u + \beta u =0$. Hence, \eqref{E2} implies \eqref{E1} and the proof of this step is complete.

Step 5.  We now show that if $u$ is an eigenfunction of $\mathcal{L}$ (with Robin boundary conditions and $\| u\|_{L^2(D)}=1$)  with eigenvalue $\mu_1$, then so it is $|u|$.

To show that note that since $u$ is an eigenfunction of ${\mathcal L}$  we have by \eqref{Va} that
\begin{eqnarray}\label{G}
Q_{\beta}(u,u)=\mu_1= \min \{ Q_{\beta}(w,w) \,\, \mid \,\, \| w\|_{L^2(D)}=1\}.
\end{eqnarray}

We recall the elementary inequality $| |a|-|b| | \le |a -b|$ which we apply for $a=u(x)$, $b=u(y)$ to see that ${\mathcal E}(|u|,|u|) \le {\mathcal E}(u,u)$, hence, 
\begin{eqnarray}\label{HG} 
Q_{\beta}(|u|,|u|) \le Q_{\beta}(u,u)=\mu_1,
\end{eqnarray}
where we also used \eqref{G}. 

But since $\| u\|_{L^2(D)}=1$ also implies that $\| |u| \|_{L^2(D)}=1$, once more by \eqref{G} and \eqref{HG}
 we see that
\begin{eqnarray}
Q_{\beta}(|u|, |u|) =\mu_1,
\end{eqnarray}
which by the results of step 4 yields that $|u|$ is also an eigenfunction.

We note that by the same argument, and since $u^{+}=\frac{1}{2}( |u| + u)$, by a convexity argument the same holds for $u^{+}$ , and subsequently for $u^{-}$. 

Step 6. By the results of step 5 we have that  $\mathcal{L} |u| = \mu_1 |u|$ in $D$, which in turn implies that $\mathcal{L} |u| \ge 0$ in $D$, with $|u| \ge 0$.  With the comment above, same applies for $u^{+}$.  By the strong maximum principle  (see Theorem 1.1, or Theorem 1.2 in  \cite{jarohs2019strong}) this implies that either $u^{+}$ is strictly positive or identically zero.  Exactly the same applies for 	$|u|$. Since having $u^{+}$ and $|u|$ identically zero will lead to the trivial solution, we conclude that $u^{+}>0$ hence $u>0$.

\end{proof}

\bibliography{sn-bibliography}


\end{document}